\definecolor{myblue}{rgb}{.8, .8, 1}
\crefname{equation}{}{}
\crefname{chapter}{Chapter}{Chapters}
\crefname{item}{item}{items}
\crefname{figure}{Figure}{Figures}
\crefname{theorem}{Theorem}{Theorems}
\crefname{lemma}{Lemma}{Lemmas}
\crefname{proposition}{Proposition}{Propositions}
\crefname{corollary}{Corollary}{Corollarys}
\crefname{definition}{Definition}{Definitions}
\crefname{fact}{Fact}{Facts}
\crefname{example}{Example}{Examples}
\crefname{algorithm}{Algorithm}{Algorithms}
\crefname{remark}{Remark}{Remarks}
\crefname{note}{Note}{Notes}
\crefname{notation}{Notation}{Notations}
\crefname{case}{Case}{Cases}
\crefname{exercise}{Exercise}{Exercises}
\crefname{question}{Question}{Questions}
\crefname{claim}{Claim}{Claims}
\crefname{enumi}{}{}
\numberwithin{equation}{section}
\theoremstyle{plain}
\newtheorem{theorem}{Theorem}[section]
\newtheorem{corollary}[theorem]{Corollary}
\newtheorem{fact}[theorem]{Fact}
\newtheorem{lemma}[theorem]{Lemma}
\newtheorem{proposition}[theorem]{Proposition}
\theoremstyle{definition}
\newtheorem{definition}[theorem]{Definition}
\newtheorem{example}[theorem]{Example}
\newtheorem{question}[theorem]{Question}
\newtheorem{remark}[theorem]{Remark}
\newcommand{\zer}{\ensuremath{\operatorname{zer}}}
\newcommand{\weakly}{\ensuremath{{\;\operatorname{\rightharpoonup}\;}}}
\newcommand{\dom}{\ensuremath{\operatorname{dom}}}
\newcommand{\gra}{\ensuremath{\operatorname{gra}}}
\newcommand{\Fix}{\ensuremath{\operatorname{Fix}}}
\newcommand{\Id}{\ensuremath{\operatorname{Id}}}
\newcommand{\dist}{\ensuremath{\operatorname{d}}}
\newcommand{\Pro}{\ensuremath{\operatorname{P}}}
\newcommand{\J}{\ensuremath{\operatorname{J}}}
\newcommand{\Range}{\ensuremath{\operatorname{ran}}}
\newcommand{\subreg}{\ensuremath{\operatorname{subreg}}}
\providecommand{\abs}[1]{\left|#1\right|}
\providecommand{\norm}[1]{\left\lVert#1\right\rVert}
\providecommand{\innp}[1]{\left\langle#1\right\rangle}
\providecommand{\lr}[1]{\left(#1\right)}
\begin{document}

\title{Linear Convergence of Generalized Proximal Point Algorithms for Monotone Inclusion Problems}

\author{
         Hui\ Ouyang\thanks{
                 Mathematics, University of British Columbia, Kelowna, B.C.\ V1V~1V7, Canada.
                 E-mail: \href{mailto:hui.ouyang@alumni.ubc.ca}{\texttt{hui.ouyang@alumni.ubc.ca}}.}
                 }

\date{March 25, 2022}

\maketitle

\begin{abstract}
	\noindent
We focus on the linear convergence of generalized proximal point algorithms for solving monotone inclusion problems. Under the assumption that the associated monotone operator is metrically subregular or that the inverse of the monotone operator is Lipschitz continuous, we provide $Q$-linear and $R$-linear convergence results on generalized proximal point algorithms. Comparisons between our results and related ones in the literature are presented in remarks of this work.
\end{abstract}

{\small
\noindent
{\bfseries 2020 Mathematics Subject Classification:}
{
	Primary 47J25, 47H05;  
	Secondary 65J15,  90C25,  90C30.
}

\noindent{\bfseries Keywords:}
Proximal point algorithm, 
monotone inclusion problems, resolvent, firmly nonexpansiveness, linear convergence
}

\section{Introduction} \label{sec:Introduction}
 Throughout this work,  
 \begin{align*}
 \text{$\mathcal{H}$ is a real Hilbert space},
 \end{align*}
 with inner product $\innp{\cdot,\cdot}$ and induced norm $\norm{\cdot}$.   

Let $A :\mathcal{H} \to 2^{\mathcal{H}}$ be  maximally monotone with $\zer A \neq \varnothing$. Denote the set of all nonnegative integers by $\mathbb{N} :=\{0,1,2,\ldots\}$. 
The iteration sequence of the  \emph{generalized proximal point algorithm} is generated by conforming to the iteration scheme: 
\begin{align*} 
(\forall k \in \mathbb{N}) \quad x_{k+1} = \lr{1-\lambda_{k}}x_{k} +\lambda_{k} \J_{c_{k} A}x_{k} +\eta_{k}e_{k}, 
\end{align*}
where  $x_{0} \in \mathcal{H}$ is the \emph{initial point} and $(\forall k \in \mathbb{N})$ $\lambda_{k} \in \left[0,2\right]$ and $\eta_{k} \in \mathbb{R}_{+}$ are the \emph{relaxation coefficients},  $c_{k} \in \mathbb{R}_{++}$ is the \emph{regularization coefficient},  and $e_{k} \in \mathcal{H}$ is the \emph{error term}.   

\emph{The goal of this work is to investigate the linear convergence of the generalized proximal point algorithm for solving the associated monotone inclusion problem, that is,  finding a point $\bar{x}$ in $\zer A :=  \left\{ x \in \mathcal{H} ~:~ 0 \in Ax \right\}$.}

Many celebrated optimization algorithms are actually specific cases of the generalized proximal point algorithm when the operator $A$ is specified accordingly; such algorithms include the projected gradient method \cite{Polyak1987}, the extragradient method \cite{Korpelevic1976}, the forward-backward  splitting algorithm \cite{LionsMercier1979}, the Peaceman-Rachford splitting algorithm \cite{PeacemanRachford1955},
the Douglas-Rachford splitting algorithm \cite{DouglasRachford1956,LionsMercier1979,PowellDRS1969}, the split inexact Uzawa method \cite{ZhangBurgerBressonOsher2010}, and so on; in addition, the augmented Lagrangian method (i.e., the method of multipliers) \cite{HestenesMultiplier1969} and the alternating direction method of multipliers \cite{GabayMercier1976} are instances of the generalized proximal point algorithm applied to dual problems (see, e.g., \cite{CormanYuan2014,EcksteinBertsekas1992,KimAcceleration2021,TaoYuan2018} for exposition).
Hence, studying the linear convergence of the generalized proximal point algorithm helps us deduce corresponding results on the linear convergence of algorithms mentioned above.

Main results in this work are summarized as follows. 
\begin{itemize}
	
		\item[\textbf{R1:}] We show  $R$-linear convergence results  on
	generalized proximal point algorithms 
	in  \cref{theorem:exactPAlinear} and \cref{theorem:MetriSubregLinearNosingleton} under the assumption of metrical subregularity.		
		
	\item[\textbf{R2:}] $Q$-linear convergence results of generalized proximal point algorithms are presented in  \cref{theorem:JckAMetricSub} and \cref{prop:JckAMetricSub} under the assumption of metrical subregularity.
	
	\item[\textbf{R3:}] In \cref{theorem:JckALipschitzLinear} and \cref{prop:JckALipschitzLinear}, under the assumption of the Lipschitz continuity of  the inverse of the related monotone operator, we obtain $Q$-linear convergence results of generalized proximal point algorithms. 

\end{itemize}

The rest of this work is organized as follows. We collect some basic definitions,  fundamental facts, and auxiliary results in \cref{sec:Preliminaries}. To facilitate  proofs in subsequent sections, we work on the metrical subregularity of set-valued operators in \cref{section:MetricalSubregularity}.
 In \cref{sec:KMannIterations}, we consider the inexact version of the  non-stationary Krasnosel'ski\v{\i}-Mann iterations. In particular, we establish a $R$-linear convergence result on the  non-stationary Krasnosel'ski\v{\i}-Mann iterations, which will be used to deduce the corresponding result on the generalized proximal point algorithm in \cref{sec:GPPA}. Our main results on the linear convergence of generalized proximal point algorithms are presented in \cref{sec:GPPA}. In the last section \cref{section:ConclusionFutureWork}, we summarize this work and list  some possible future work.

We now turn to the notation used in this work. 
$\Id$ stands for the \emph{identity mapping}.  
Denote by  $\mathbb{R}_{+}:=\{\lambda \in \mathbb{R} ~:~ \lambda \geq 0 \}$ and $\mathbb{R}_{++}:=\{\lambda \in \mathbb{R} ~:~ \lambda >0 \}$. 
Let $\bar{x} $ be in $ \mathcal{H}$ and let $r \in \mathbb{R}_{+}$.
$B[\bar{x};r]:= \{ y\in \mathcal{H} ~:~ \norm{y-\bar{x}} \leq r \}$ is the \emph{closed ball centered at $\bar{x}$ with radius $r$}.  
Let $C$ be a nonempty set of $\mathcal{H}$. Then $(\forall x \in \mathcal{H})$ $\dist \lr{x, C} = \inf_{y \in C} \norm{x-y}$. If $C$ is   nonempty closed and convex, then the \emph{projector} (or \emph{projection operator}) onto $C$ is the operator, denoted by $\Pro_{C}$,  that maps every point in $\mathcal{H}$ to its unique projection onto $C$, that is, $(\forall x \in \mathcal{H})$ $\norm{x - \Pro_{C}x} = \dist \lr{x, C} $.  
Let $\mathcal{D} $ be a nonempty subset of $\mathcal{H}$ and let $T: \mathcal{D} \to  \mathcal{H}$. $\Fix T :=\{ x \in \mathcal{D}~:~ x = T(x) \}$ is the \emph{set of fixed points  of $T$}. 
Let $A: \mathcal{H} \to 2^{\mathcal{H}}$ be a set-valued operator. Then $A$ is characterized by its \emph{graph} $\gra A:= \{ (x,u) \in \mathcal{H} \times \mathcal{H} ~:~ u\in A(x) \}$. The \emph{inverse} of $A$, denoted by $A^{-1}$, is defined through its graph $\gra A^{-1} :=  \{ (u,x) \in \mathcal{H} \times \mathcal{H} ~:~ (x,u) \in \gra A \}$.
The  \emph{domain},  \emph{range}, and \emph{set of zeros}  of $A$   are  defined by $\dom A := \left\{ x \in \mathcal{H} ~:~ Ax \neq \varnothing \right\}$,  $\Range A := \left\{ y\in \mathcal{H} ~:~ \exists~ x \in \mathcal{H} \text{ s.t. } y\in Ax \right\}$, and $\zer A :=  \left\{ x \in \mathcal{H} ~:~ 0 \in Ax \right\}$, respectively. 
Let $(y_{k})_{k \in \mathbb{N}}$ be a sequence in $\mathcal{H}$ and let $\bar{y} $ be in $\mathcal{H}$.  $\Omega \lr{ \lr{y_{k}}_{k \in \mathbb{N}} }$ stands for the \emph{set of all weak sequential clusters of the sequence $(y_{k})_{k \in \mathbb{N}}$}.  If  $(y_{k})_{k \in \mathbb{N}}$ \emph{converges} (\emph{strongly}) to $\bar{y}$, then we denote by $y_{k} \to \bar{y}$.  $(y_{k})_{k \in \mathbb{N}}$ \emph{converges weakly} to  $\bar{y} $ if, for every $u \in \mathcal{H}$, $\innp{y_{k},u} \rightarrow \innp{y,u}$; in symbols, $y_{k} \weakly \bar{y}$. Suppose that $(y_{k})_{k \in \mathbb{N}}$ converges to  $\bar{y}$. Then 
$(y_{k})_{k \in \mathbb{N}}$  is \emph{$R$-linearly convergent}  (or \emph{converges $R$-linearly}) to $\bar{y}$ if $\limsup_{k \to \infty} \lr{\norm{y_{k} -\bar{y}}}^{\frac{1}{k}} <1$; when $(\forall k \in \mathbb{N})$ $y_{k} \neq \bar{y}$, we say $(y_{k})_{k \in \mathbb{N}}$  is \emph{$Q$-linearly convergent}  (or \emph{converges $Q$-linearly}) to $\bar{y}$ if $\limsup_{k \to \infty} \frac{\norm{y_{k+1} -\bar{y}}}{\norm{y_{k} -\bar{y}}} <1$.
For other notation not explicitly defined here, we refer the reader to \cite{BC2017}.

\section{Preliminaries} \label{sec:Preliminaries}

 The definitions, facts, and lemmas gathered in this section are fundamental to our analysis in the subsequent sections. 
\subsection{Nonexpansive operators}

All algorithms considered in this work are based on nonexpansive operators. 
 
\begin{definition} {\rm \cite[Definition~4.1]{BC2017}} \label{definition:Nonexpansive}
	Let $D$ be a nonempty subset of $\mathcal{H}$ and let $T: D \to \mathcal{H}$. Then $T$ is 
	\begin{enumerate}
		\item  \label{definition:Nonexpansive:nonexp} \index{nonexpansive operator} \emph{nonexpansive} if it is Lipschitz continuous with constant $1$, i.e., $(\forall  x \in D)$ $ (\forall y \in D) $ $\norm{Tx-Ty} \leq \norm{x-y}$;

		\item  \label{definition:Nonexpansive:firmlynonexp} \index{firmly nonexpansive operator} 
		\emph{firmly nonexpansive} if $	(\forall  x \in D) $ $(\forall y \in D) $ $ \norm{Tx-Ty}^{2} +\norm{(\Id -T)x - (\Id -T)y}^{2} \leq \norm{x-y}^{2}$.
	\end{enumerate}
\end{definition}

Although \cite[Definition~4.33]{BC2017} considers only the case $\alpha \in \left]0,1\right[\,$, we extend the definition to $\alpha \in \left]0,1\right]$. Clearly, by \cref{definition:Nonexpansive}\cref{definition:Nonexpansive:nonexp} and \cref{definition:averaged}, $T$ is $1$-averaged if and only if $T$ is nonexpansive. This extension will facilitate our future statements. It is clear that both firmly nonexpansive and averaged operators must be nonexpansive.
\begin{definition} {\rm \cite[Definition~4.33]{BC2017}} 	\label{definition:averaged}
	\index{averaged nonexpansive operator} 
	Let $D$ be a nonempty subset of $\mathcal{H}$,  let $T: D \to \mathcal{H}$ be nonexpansive, and let $\alpha \in \left]0,1\right]$. Then $T$ is \emph{averaged with constant $\alpha$}, or  \emph{$\alpha$-averaged}, if there exists a nonexpansive operator $R: D \to \mathcal{H}$ such that $T=(1-\alpha) \Id +\alpha R$.	
\end{definition}

\begin{fact} \label{fact:lemma:yxzx} {\rm \cite[Proposition~2.7(ii)]{OuyangStabilityKMIterations2022}}
	Let $\alpha \in \left]0,1\right]$ and let $T:\mathcal{H} \to \mathcal{H}$ be an $\alpha$-averaged operator with $\Fix T \neq \varnothing$. Let $  x $ and $e $ be in $ \mathcal{H}$ and  let $\lambda  $
	and  $\eta $ be in $\mathbb{R}_{+}$. Define
	\begin{align*}
	y_{x} := (1-\lambda) x+\lambda Tx \quad \text{and} \quad 
	z_{x} :=   (1-\lambda) x+\lambda Tx +\eta e=y_{x} +\eta e.
	\end{align*}
	Then for every $\bar{x} \in \Fix T$,
	\begin{subequations} \label{eq:fact:lemma:yxzx} 
		\begin{align}
	&\norm{y_{x} -\bar{x}}^{2} \leq \norm{x -\bar{x}}^{2} -\lambda \lr{\frac{1}{\alpha} -\lambda} \norm{x -Tx}^{2};  \label{eq:fact:lemma:yxzx:y} \\
	& \norm{z_{x} -\bar{x}}^{2}  
	\leq  \norm{x -\bar{x}}^{2} -\lambda  \lr{\frac{1}{\alpha} -\lambda} \norm{x -Tx}^{2} +\eta \norm{e} \lr{2\norm{y_{x} -\bar{x}} +\eta \norm{e}}.  \label{eq:fact:lemma:yxzx:z} 
	\end{align}	
	\end{subequations}
\end{fact}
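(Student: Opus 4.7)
The plan is to reduce both inequalities to the basic nonexpansive operator hiding inside the averaged representation of $T$. By \cref{definition:averaged}, write $T = (1-\alpha)\Id + \alpha R$ with $R:\mathcal{H} \to \mathcal{H}$ nonexpansive. Since $T\bar{x} = \bar{x}$ forces $R\bar{x} = \bar{x}$, we have $\bar{x} \in \Fix R$. Substituting into the definition of $y_{x}$ and setting $\mu := \lambda\alpha$ gives $y_{x} = (1-\mu)x + \mu Rx$, so that $y_{x} - \bar{x}$ is a (possibly extrapolated) combination of $x - \bar{x}$ and $Rx - \bar{x}$.

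For \cref{eq:fact:lemma:yxzx:y}, I would apply the elementary identity $\norm{(1-t)a + tb}^{2} = (1-t)\norm{a}^{2} + t\norm{b}^{2} - t(1-t)\norm{a-b}^{2}$, valid for every $t \in \mathbb{R}$, with $a := x - \bar{x}$, $b := Rx - \bar{x}$, and $t := \mu$. Nonexpansiveness of $R$ together with $\bar{x} \in \Fix R$ gives $\norm{Rx - \bar{x}} \leq \norm{x - \bar{x}}$; since $\mu \geq 0$, this lets me replace $\mu\norm{Rx - \bar{x}}^{2}$ by $\mu\norm{x - \bar{x}}^{2}$ irrespective of the sign of $1 - \mu$, yielding $\norm{y_{x} - \bar{x}}^{2} \leq \norm{x - \bar{x}}^{2} - \mu(1-\mu)\norm{x - Rx}^{2}$. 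The relation $x - Rx = \alpha^{-1}(x - Tx)$ then converts $\mu(1-\mu)\norm{x - Rx}^{2}$ into $\lambda\!\left(\tfrac{1}{\alpha} - \lambda\right)\norm{x - Tx}^{2}$, which is exactly the target.

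For \cref{eq:fact:lemma:yxzx:z}, I would expand $\norm{z_{x} - \bar{x}}^{2} = \norm{y_{x} - \bar{x} + \eta e}^{2} = \norm{y_{x} - \bar{x}}^{2} + 2\eta \innp{y_{x} - \bar{x}, e} + \eta^{2}\norm{e}^{2}$, bound the cross term via Cauchy--Schwarz, and substitute the estimate from \cref{eq:fact:lemma:yxzx:y}; the resulting error terms factor as $\eta\norm{e}\!\left(2\norm{y_{x} - \bar{x}} + \eta\norm{e}\right)$. I do not foresee any genuine obstacle: the only mildly delicate point is that $\lambda$ is only required to lie in $\mathbb{R}_{+}$, so $\mu = \lambda\alpha$ is not a priori at most $1$ and the sign of $\mu(1-\mu)$ is unrestricted; hence I must be careful that the nonexpansive estimate on $R$ is applied in the direction that preserves the inequality regardless of the sign of $1-\mu$, as highlighted in the previous paragraph.
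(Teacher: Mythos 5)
Your proof is correct and complete. The paper does not prove this statement at all --- it is imported as a \emph{Fact} citing \cite[Proposition~2.7(ii)]{OuyangStabilityKMIterations2022} --- so there is no in-paper argument to compare against; your route (unfolding $T=(1-\alpha)\Id+\alpha R$, writing $y_{x}=(1-\mu)x+\mu Rx$ with $\mu=\lambda\alpha$, applying the identity $\norm{(1-t)a+tb}^{2}=(1-t)\norm{a}^{2}+t\norm{b}^{2}-t(1-t)\norm{a-b}^{2}$, and using nonexpansiveness of $R$ only on the term with nonnegative coefficient $\mu$) is the standard one and correctly handles the fact that $\lambda$ is merely in $\mathbb{R}_{+}$, with the Cauchy--Schwarz step giving \cref{eq:fact:lemma:yxzx:z} exactly as stated.
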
		

		\begin{lemma} \label{lemma:yxzx}
			Let $\alpha \in \left]0,1\right]$ and let $T:\mathcal{H} \to \mathcal{H}$ be an $\alpha$-averaged operator with $\Fix T \neq \varnothing$. Let $x$ and $e$ be in $  \mathcal{H}$, let $\lambda \in \mathbb{R}$,
			and  let $\eta \in \mathbb{R}_{+}$. Define
			\begin{align*}
			y_{x} := (1-\lambda) x+\lambda Tx \quad \text{and} \quad 
			z_{x} :=   (1-\lambda) x+\lambda Tx +\eta e=y_{x} +\eta e.
			\end{align*}
			Let $\varepsilon$ and $ \beta $ be in $  \mathbb{R}_{+}$ with $\eta \varepsilon \in \left[0,1\right[$ and let $ \bar{x} \in \mathcal{H}$. Suppose   that $\norm{e} \leq \varepsilon \norm{x -z_{x}}$ and $\norm{y_{x} -\bar{x}} \leq \beta \norm{x -\bar{x}}$. Then
			\begin{align*}
			\norm{z_{x} -\bar{x}} \leq \frac{\beta + \eta \varepsilon}{1 - \eta \varepsilon} \norm{x -\bar{x}}.
			\end{align*}		
\end{lemma}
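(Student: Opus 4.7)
The plan is to avoid invoking Fact 2.4 entirely and instead proceed by two applications of the triangle inequality together with the given bounds. The structure of the claim — a quotient with $1 - \eta \varepsilon$ in the denominator — strongly suggests that the key algebraic move is to isolate $\norm{z_{x} - \bar{x}}$ on the left-hand side after having it appear on both sides of an intermediate estimate.

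First, since $z_{x} = y_{x} + \eta e$, the triangle inequality together with the hypothesis $\norm{y_{x} - \bar{x}} \leq \beta \norm{x - \bar{x}}$ and the error bound $\norm{e} \leq \varepsilon \norm{x - z_{x}}$ yields
\begin{equation*}
\norm{z_{x} - \bar{x}} \leq \norm{y_{x} - \bar{x}} + \eta \norm{e} \leq \beta \norm{x - \bar{x}} + \eta \varepsilon \norm{x - z_{x}}.
\end{equation*}
Next, I would control $\norm{x - z_{x}}$ by inserting $\bar{x}$ via the triangle inequality: $\norm{x - z_{x}} \leq \norm{x - \bar{x}} + \norm{z_{x} - \bar{x}}$. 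Substituting this into the previous display gives
\begin{equation*}
\norm{z_{x} - \bar{x}} \leq \beta \norm{x - \bar{x}} + \eta \varepsilon \norm{x - \bar{x}} + \eta \varepsilon \norm{z_{x} - \bar{x}}.
\end{equation*}

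Finally, because $\eta \varepsilon \in [0, 1[$, the coefficient $1 - \eta \varepsilon$ is strictly positive, so rearranging and dividing through yields the desired inequality
\begin{equation*}
\norm{z_{x} - \bar{x}} \leq \frac{\beta + \eta \varepsilon}{1 - \eta \varepsilon} \norm{x - \bar{x}}.
\end{equation*}
There is no real obstacle here — the proof is essentially a two-line chain of triangle inequalities followed by an algebraic isolation — but the one step that needs a moment of care is verifying that $1 - \eta \varepsilon > 0$ (so that the division preserves the inequality), which is precisely the purpose of the hypothesis $\eta \varepsilon \in [0, 1[$. Notice that the averagedness of $T$ and the assumption $\Fix T \neq \varnothing$ are not actually used in this particular estimate; they are presumably present only to match the setting of the surrounding results, so the sketch above is purely metric in nature.
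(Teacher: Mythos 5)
Your proposal is correct and follows essentially the same route as the paper: the same chain of two triangle inequalities combined with the hypotheses, followed by moving the $\eta\varepsilon\norm{z_{x}-\bar{x}}$ term to the left and dividing by $1-\eta\varepsilon>0$. Your observation that the averagedness of $T$ and $\Fix T \neq \varnothing$ are not needed for this estimate is also accurate.
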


\begin{proof}
	Apply  the assumptions $\norm{e} \leq \varepsilon \norm{x -z_{x}}$ and $\norm{y_{x} -\bar{x}} \leq \beta \norm{x -\bar{x}}$ in the following second inequality to force that  
	\begin{align*}
	\norm{z_{x} -\bar{x}}   
	\leq  \norm{ y_{x}-\bar{x} } + \eta  \norm{e } 
	\leq   \beta \norm{x -\bar{x}} + \eta\varepsilon \norm{x -z_{x}} 
	\leq   \beta \norm{x -\bar{x}} + \eta\varepsilon \lr{ \norm{x -\bar{x}} +\norm{\bar{x} -z_{x}}},
	\end{align*}
	which implies directly that $\norm{z_{x} -\bar{x}} \leq \frac{\beta + \eta \varepsilon}{1 - \eta \varepsilon} \norm{x -\bar{x}}$.
\end{proof}

\subsection{Resolvent of monotone operators}

\begin{definition} \label{definition:monotone}
	Let $ A : \mathcal{H} \to 2^{\mathcal{H}}$ be a set-valued operator. Then we say 
	\begin{enumerate}
		\item  {\rm \cite[Definition~20.1]{BC2017}} $A$ is  \emph{monotone}  if $	(\forall (x,u) \in \gra A ) $ $ (\forall  (y,v) \in \gra A) $ $ \innp{x-y, u-v} \geq 0$;
		\item  {\rm \cite[Definition~20.20]{BC2017}}  a monotone operator $A$ is \emph{maximally monotone} (or \emph{maximal monotone}) if there exists no monotone operator $ B :\mathcal{H} \to 2^{\mathcal{H}}$ such that $\gra B$ properly contains $\gra A$, i.e., for every $(x,u) \in \mathcal{H} \times \mathcal{H}$, 
		\begin{align*}
		(x,u) \in \gra A \Leftrightarrow \lr{ \forall  (y,v) \in \gra A } \innp{x-y, u-v} \geq 0.
		\end{align*}
	\end{enumerate} 
\end{definition}

\begin{definition} {\rm \cite[Definition~23.1]{BC2017}} \label{defn:ResolventApproxi}
	Let $A: \mathcal{H} \to 2^{\mathcal{H}}$ and let $\gamma \in \mathbb{R}_{++}$. The \emph{resolvent of $A$} \index{resolvent} is 
	\begin{align*}
	\J_{A} = (\Id + A)^{-1}.
	\end{align*}
\end{definition}

 \cref{fact:cAMaximallymonotone} illustrates that the resolvent of a maximally monotone operator is single-valued, full domain, and firmly nonexpansive, which is essential to our study on generalized proximal point algorithms in subsequent sections. 

\begin{fact}  {\rm \cite[Proposition~23.10]{BC2017}} \label{fact:cAMaximallymonotone}
	Let  $A: \mathcal{H} \to 2^{\mathcal{H}}$ be such that $\dom A \neq \varnothing$, set $D:= \Range A$, and set $T=\J_{A } |_{ D}$. Then $A$ is maximally monotone if and only if $T$ is firmly nonexpansive and $D =\mathcal{H}$.
\end{fact}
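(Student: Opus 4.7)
The statement is the classical Minty-type characterization, so the plan is to argue both implications of the ``if and only if'' by manipulating pairs in $\gra A$ versus the resolvent. Throughout I will use the fact that, for any operator, $p = \J_{A}x$ is equivalent to $(p, x-p) \in \gra A$, which comes directly from $\J_{A} = (\Id + A)^{-1}$.

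For the forward direction, assume $A$ is maximally monotone. To establish firm nonexpansiveness of $T = \J_{A}|_{D}$ on $D = \Range(\Id + A)$, pick $x,y \in D$, set $p = Tx$, $q = Ty$, so that $x - p \in Ap$ and $y - q \in Aq$. Applying monotonicity to the pairs $(p, x-p)$ and $(q, y-q)$ yields $\innp{p-q, (x-p)-(y-q)} \geq 0$, which rearranges to $\innp{p-q, x-y} \geq \norm{p-q}^{2}$; this is the standard equivalent form of firm nonexpansiveness. The harder part is showing $D = \mathcal{H}$, i.e.\ surjectivity of $\Id + A$; this is Minty's theorem, and in a Hilbert space it is typically proved by a Zorn's lemma / monotone extension argument combined with a fixed-point construction on $\Id - \J_{\gamma A}$ for small $\gamma$. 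I would invoke the Hilbert-space form of Minty's theorem rather than reprove it.

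For the reverse direction, assume $T$ is firmly nonexpansive with $D = \mathcal{H}$. Monotonicity of $A$ is easy: given $(x,u),(y,v) \in \gra A$, set $a = x+u$, $b = y+v$, so $\J_{A} a = x$ and $\J_{A} b = y$. Firm nonexpansiveness gives $\norm{x-y}^{2} \leq \innp{x-y, a-b} = \norm{x-y}^{2} + \innp{x-y, u-v}$, hence $\innp{x-y, u-v} \geq 0$. For maximality, suppose $(x_{0}, u_{0}) \in \mathcal{H} \times \mathcal{H}$ satisfies $\innp{x_{0}-y, u_{0}-v} \geq 0$ for all $(y,v) \in \gra A$. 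Since $\Id + A$ is surjective, set $y_{0} = \J_{A}(x_{0}+u_{0})$, so that $(y_{0}, x_{0}+u_{0}-y_{0}) \in \gra A$. Plugging this pair into the monotonicity-like inequality collapses the right side to $-\norm{x_{0}-y_{0}}^{2} \geq 0$, forcing $x_{0} = y_{0}$ and hence $u_{0} \in Ax_{0}$, so $(x_{0},u_{0}) \in \gra A$.

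The main obstacle is the surjectivity claim $D = \mathcal{H}$ in the forward direction, which is the non-trivial content of Minty's theorem; all other parts are short algebraic manipulations. Since the statement is cited verbatim as \cite[Proposition~23.10]{BC2017}, I would present the easy directions explicitly and appeal to the standard reference for Minty surjectivity rather than reproduce that argument.
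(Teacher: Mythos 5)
The paper never proves this statement: it is imported verbatim as a ``Fact,'' with the citation \cite[Proposition~23.10]{BC2017} serving as the entire justification, so there is no in-paper argument to compare against. Your outline is the standard textbook proof and is correct as far as it goes: the equivalence $p=\J_{A}x \Leftrightarrow (p,x-p)\in\gra A$, the inner-product form of firm nonexpansiveness, and the collapse $\innp{x_{0}-y_{0},\,u_{0}-(x_{0}+u_{0}-y_{0})}=-\norm{x_{0}-y_{0}}^{2}$ in the maximality step are all exactly right. The one genuinely non-trivial ingredient --- surjectivity of $\Id+A$ for a maximally monotone $A$ --- you correctly isolate as Minty's theorem and defer to the reference, which is in effect what the paper does for the whole statement; so your proposal adds the routine algebra but no new mathematical content beyond the standard source. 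Two small remarks. First, the statement as printed contains a typo: $D:=\Range A$ should read $D:=\Range(\Id+A)$, since $\dom \J_{A}=\Range(\Id+A)$; you silently and correctly work with the latter. Second, in the forward direction you write ``set $p=Tx$, $q=Ty$'' before single-valuedness of $\J_{A}$ on $D$ has been established, but your own inequality $\innp{p-q,x-y}\geq\norm{p-q}^{2}$ applied with $x=y$ yields $p=q$, so the gap is purely presentational and worth one added sentence.
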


 \begin{lemma} {\rm \cite[Proposition~23.38]{BC2017}} \label{cor:fact:FixJcAzerA}
 	Let $A: \mathcal{H} \to 2^{\mathcal{H}}$ be maximally monotone and  let $ \gamma \in \mathbb{R}_{++}$. Then $ \J_{\gamma A}$ is $\frac{1}{2}$-averaged and 
 	\begin{align} \label{eq:fact:FixJcAzerA}
 \zer \lr{\Id -\J_{\gamma A} }  =	\Fix \J_{\gamma A} = \zer A.
 	\end{align} 
 \end{lemma}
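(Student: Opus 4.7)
The plan is to reduce everything to the firm nonexpansiveness of $\J_{\gamma A}$ (which is already packaged in \cref{fact:cAMaximallymonotone}) and then unwind the resolvent definition to characterize fixed points. The first step is to observe that for any $\gamma \in \mathbb{R}_{++}$, the scaled operator $\gamma A$ is again maximally monotone; this is immediate because multiplication by a strictly positive scalar preserves both monotonicity of the graph and maximality (no monotone extension of $A$ can be rescaled away). Having $\gamma A$ maximally monotone with $\dom(\gamma A) \neq \varnothing$, \cref{fact:cAMaximallymonotone} applied to $\gamma A$ yields at once that $\J_{\gamma A}$ is single-valued, defined on all of $\HH$, and firmly nonexpansive.

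Next I would turn firm nonexpansiveness into the $\tfrac{1}{2}$-averaged property using the standard algebraic reformulation: writing $R := 2\J_{\gamma A} - \Id$, one verifies $\J_{\gamma A} = \tfrac12 \Id + \tfrac12 R$, and the firm nonexpansiveness inequality in \cref{definition:Nonexpansive}\cref{definition:Nonexpansive:firmlynonexp} is exactly the statement that this $R$ is nonexpansive. Hence $\J_{\gamma A}$ is $\tfrac12$-averaged in the sense of \cref{definition:averaged}.

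For the chain of equalities in \cref{eq:fact:FixJcAzerA}, the left equality $\zer(\Id - \J_{\gamma A}) = \Fix \J_{\gamma A}$ is purely tautological: $x - \J_{\gamma A} x = 0$ if and only if $x = \J_{\gamma A} x$. The right equality is a direct unwinding of \cref{defn:ResolventApproxi}. For any $x \in \HH$,
\begin{align*}
x \in \Fix \J_{\gamma A} \;\Longleftrightarrow\; x = (\Id+\gamma A)^{-1} x \;\Longleftrightarrow\; x \in x + \gamma A x \;\Longleftrightarrow\; 0 \in \gamma A x \;\Longleftrightarrow\; 0 \in A x,
\end{align*}
where the last step uses $\gamma > 0$.

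There is no real obstacle here; the statement is essentially a bookkeeping corollary of \cref{fact:cAMaximallymonotone} combined with two definitional unwindings. The only point that requires a touch of care is verifying that the reflected resolvent $2\J_{\gamma A} - \Id$ is nonexpansive from the firm nonexpansiveness inequality, but this is a one-line algebraic identity.
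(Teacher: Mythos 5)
Your proposal is correct and follows essentially the same route as the paper: both pass through \cref{fact:cAMaximallymonotone} applied to $\gamma A$ to get firm nonexpansiveness on all of $\mathcal{H}$, then upgrade to $\tfrac12$-averagedness and characterize the fixed points. The only difference is presentational: where you prove the three auxiliary steps by hand (maximal monotonicity of $\gamma A$, the reflected-resolvent equivalence, and the unwinding $x=\J_{\gamma A}x \Leftrightarrow 0\in Ax$), the paper simply cites \cite[Proposition~20.22]{BC2017}, \cite[Remark~4.34(iii)]{BC2017}, and \cite[Proposition~23.38]{BC2017} for them.
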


 \begin{proof}
 Because $A$ is maximally monotone, due to \cite[Proposition~20.22]{BC2017}, \cref{fact:cAMaximallymonotone},	and \cite[Remark~4.34(iii)]{BC2017}, we know that $\gamma A$ is also maximally monotone and that $ \J_{\gamma A}$ is $\frac{1}{2}$-averaged, which, combined with \cite[Proposition~23.38]{BC2017}, yields \cref{eq:fact:FixJcAzerA}.
 \end{proof}

\begin{fact} {\rm \cite[Proposition~23.2(ii)]{BC2017}} \label{corollary:JcA}
	Let  $A: \mathcal{H} \to 2^{\mathcal{H}}$ be maximally monotone. Then 
	\begin{align*}
\lr{x \in \mathcal{H}} \lr{\gamma \in \mathbb{R}_{++}} \quad 	\lr{ \J_{\gamma A} x , \frac{1}{\gamma} \lr{x-\J_{\gamma A} x}}   \in \gra A. 
	\end{align*}
\end{fact}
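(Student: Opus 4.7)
The statement is essentially an unpacking of the definition of the resolvent, so the proof should be short and direct. The plan is to fix $x \in \mathcal{H}$ and $\gamma \in \mathbb{R}_{++}$, set $p := \J_{\gamma A} x$, and rewrite the defining relation $p = (\Id + \gamma A)^{-1} x$ as a membership in $\gra A$.

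First I would invoke \cite[Proposition~20.22]{BC2017} (as was already done in the proof of \cref{cor:fact:FixJcAzerA}) to conclude that $\gamma A$ is maximally monotone, so that by \cref{fact:cAMaximallymonotone} the operator $\J_{\gamma A}$ is single-valued and has full domain. This guarantees that $p = \J_{\gamma A} x$ is unambiguously defined. By \cref{defn:ResolventApproxi}, $p = (\Id + \gamma A)^{-1} x$, which by definition of the inverse of a set-valued operator means $x \in (\Id + \gamma A)(p) = p + \gamma A(p)$.

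Next I would rearrange this to $x - p \in \gamma A(p)$, divide by $\gamma > 0$ to obtain $\frac{1}{\gamma}(x - p) \in A(p)$, and conclude that
\begin{align*}
\lr{p, \tfrac{1}{\gamma}(x - p)} = \lr{\J_{\gamma A} x, \tfrac{1}{\gamma}(x - \J_{\gamma A} x)} \in \gra A,
\end{align*}
which is precisely the desired inclusion.

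There is no real obstacle here: the statement is a one-line reformulation of the definition of the resolvent. The only care needed is to note that single-valuedness of $\J_{\gamma A}$ (which requires maximal monotonicity, not just monotonicity) is what allows us to write $p = \J_{\gamma A} x$ rather than $p \in \J_{\gamma A} x$, and this is supplied by \cref{fact:cAMaximallymonotone}.
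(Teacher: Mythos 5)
Your proof is correct and is exactly the standard argument: the paper states this as a cited fact (from \cite[Proposition~23.2(ii)]{BC2017}) without reproving it, and the cited proof is precisely the definition-unpacking you give, namely $p = \J_{\gamma A}x \Leftrightarrow x \in p + \gamma A p \Leftrightarrow \frac{1}{\gamma}(x-p) \in Ap$. Your remark that maximal monotonicity (via \cref{fact:cAMaximallymonotone}) is what guarantees single-valuedness and full domain of the resolvent is the right point of care.
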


\begin{fact} \label{lemma:resolvents:yxzx} {\rm \cite[Lemma~2.12]{OuyangStabilityKMIterations2022}}
	Let $A: \mathcal{H} \to 2^{\mathcal{H}}$ be maximally monotone with $\zer A \neq \varnothing$.    Let  $ x $ and $e$  be in $ \mathcal{H}$, let $\lambda $ and $\eta$ be in $ \mathbb{R}_{+}$, and let 	$\gamma \in \mathbb{R}_{++}$. Define
	\begin{align*}
	 y_{x} := (1-\lambda) x+\lambda \J_{\gamma A}x \quad \text{and} \quad z_{x} :=   (1-\lambda) x+\lambda \J_{\gamma A}x +\eta e. 
	\end{align*}
	Then  $\lr{ \forall \bar{x} \in \zer A}$ $\norm{y_{x} -\bar{x}}^{2} \leq \norm{x -\bar{x}}^{2} -\lambda \lr{2 -\lambda} \norm{x -\J_{\gamma A}x}^{2}$.
\end{fact}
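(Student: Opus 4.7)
The plan is to recognize that this statement is essentially the specialization of \cref{fact:lemma:yxzx}\cref{eq:fact:lemma:yxzx:y} to the case where the underlying $\alpha$-averaged operator is the resolvent $\J_{\gamma A}$. The only work is to identify the correct value of $\alpha$ and to verify that the fixed-point set of the resolvent agrees with $\zer A$, so that the quantifier over $\bar{x} \in \zer A$ makes sense.

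First I would invoke \cref{cor:fact:FixJcAzerA}, which tells us that since $A$ is maximally monotone, the resolvent $\J_{\gamma A}$ is $\frac{1}{2}$-averaged and $\Fix \J_{\gamma A} = \zer A$. In particular, the assumption $\zer A \neq \varnothing$ translates into $\Fix \J_{\gamma A} \neq \varnothing$, which is exactly the hypothesis needed to apply \cref{fact:lemma:yxzx}. The assumption $\gamma \in \mathbb{R}_{++}$ is used here to form the resolvent $\J_{\gamma A}$, and $\lambda, \eta \in \mathbb{R}_{+}$ match the nonnegativity assumptions in \cref{fact:lemma:yxzx}.

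Next I would apply \cref{fact:lemma:yxzx}\cref{eq:fact:lemma:yxzx:y} with $T := \J_{\gamma A}$ and $\alpha := \frac{1}{2}$, to the given $x$, $e$, $\lambda$, $\eta$ and any $\bar{x} \in \Fix \J_{\gamma A} = \zer A$. The bound from \cref{fact:lemma:yxzx} reads
\begin{equation*}
\norm{y_{x} - \bar{x}}^{2} \leq \norm{x - \bar{x}}^{2} - \lambda \left(\tfrac{1}{\alpha} - \lambda \right) \norm{x - Tx}^{2},
\end{equation*}
and with $\alpha = \tfrac{1}{2}$ the factor $\tfrac{1}{\alpha} - \lambda$ becomes $2 - \lambda$, yielding exactly the claimed inequality. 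No separate treatment of $e$ or $\eta$ is needed because the inequality only concerns $y_x$.

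There is no real obstacle here; the argument is a one-line reduction, and the only subtlety is being careful that \cref{definition:averaged} and \cref{fact:lemma:yxzx} were stated for $\alpha \in \left]0,1\right]$ (as the paper explicitly extended), so the value $\alpha = \frac{1}{2}$ is admissible.
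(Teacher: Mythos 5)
Your reduction is correct: \cref{cor:fact:FixJcAzerA} gives that $\J_{\gamma A}$ is $\frac{1}{2}$-averaged with $\Fix \J_{\gamma A} = \zer A \neq \varnothing$, and then \cref{eq:fact:lemma:yxzx:y} in \cref{fact:lemma:yxzx} with $T = \J_{\gamma A}$ and $\alpha = \frac{1}{2}$ yields the factor $\frac{1}{\alpha} - \lambda = 2 - \lambda$ and hence the stated inequality for every $\bar{x} \in \zer A$. The paper itself imports this statement as a Fact from an external reference and gives no proof, but your one-line specialization is exactly the derivation the paper uses implicitly elsewhere (e.g., when it applies \cref{theorem:KMBasic} with $T_{k} = \J_{c_{k}A}$ and $\alpha_{k} = \frac{1}{2}$), so there is nothing to add.
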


\begin{fact}\label{lemma:JGammaAFix}{\rm \cite[Lemma~2.15]{OuyangStabilityKMIterations2022}}
	Let $A: \mathcal{H} \to 2^{\mathcal{H}}$ be maximally monotone with $\zer A \neq \varnothing$ and  let $ \gamma \in \mathbb{R}_{++}$. Then $	(\forall x \in \mathcal{H})	(\forall z \in \zer A) $ $\norm{\J_{\gamma A} x -z}^{2} +\norm{ \lr{\Id - \J_{\gamma A}}x}^{2} \leq \norm{x -z}^{2}$.
\end{fact}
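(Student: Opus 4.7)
The plan is to derive the inequality directly from the firm nonexpansiveness of $\J_{\gamma A}$, which we already have in hand via the earlier facts in the excerpt. Since $A$ is assumed maximally monotone, \cref{cor:fact:FixJcAzerA} tells us that $\J_{\gamma A}$ is $\frac{1}{2}$-averaged; by the extended \cref{definition:averaged} this is equivalent to firm nonexpansiveness (and, in any case, firm nonexpansiveness of $\J_{\gamma A}$ follows from \cref{fact:cAMaximallymonotone} applied to $\gamma A$). Simultaneously, \cref{cor:fact:FixJcAzerA} gives $\Fix \J_{\gamma A} = \zer A$, which will make the right-hand sides collapse.

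Given these, I would simply apply \cref{definition:Nonexpansive}\cref{definition:Nonexpansive:firmlynonexp} to the pair $(x, z)$ with $T = \J_{\gamma A}$ and $z \in \zer A$. Firm nonexpansiveness yields
\begin{align*}
\norm{\J_{\gamma A} x - \J_{\gamma A} z}^{2} + \norm{(\Id - \J_{\gamma A})x - (\Id - \J_{\gamma A})z}^{2} \leq \norm{x - z}^{2}.
\end{align*}
Since $z \in \zer A = \Fix \J_{\gamma A}$, we have $\J_{\gamma A} z = z$ and hence $(\Id - \J_{\gamma A})z = 0$; substituting these in produces exactly $\norm{\J_{\gamma A} x - z}^{2} + \norm{(\Id - \J_{\gamma A})x}^{2} \leq \norm{x - z}^{2}$.

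As an equally short alternative route, one could instead invoke \cref{lemma:resolvents:yxzx} with parameters $\lambda = 1$ and $\eta = 0$: then $y_x = \J_{\gamma A} x$ and the bound reads $\norm{\J_{\gamma A} x - \bar{x}}^{2} \leq \norm{x - \bar{x}}^{2} - \norm{x - \J_{\gamma A} x}^{2}$, which rearranges into the claimed estimate upon taking $\bar{x} = z$. Either derivation is a one-line consequence of results already collected, so there is no real obstacle here; the only minor point worth flagging is that the proof depends on the identification $\Fix \J_{\gamma A} = \zer A$, which is precisely what lets the $(\Id - T)z$ term vanish and turns the symmetric firm-nonexpansiveness inequality into the asymmetric Fejér-type bound in the statement.
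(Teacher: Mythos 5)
Your proof is correct: the paper states this as a \cref{lemma:JGammaAFix}-style \emph{fact} imported from \cite[Lemma~2.15]{OuyangStabilityKMIterations2022} and gives no proof of its own, and your argument (firm nonexpansiveness of $\J_{\gamma A}$ from \cref{fact:cAMaximallymonotone} applied to the maximally monotone operator $\gamma A$, combined with $\Fix \J_{\gamma A} = \zer A$ from \cref{cor:fact:FixJcAzerA} so that the $(\Id - \J_{\gamma A})z$ term vanishes) is exactly the standard derivation one would expect the cited reference to use. The alternative route via \cref{lemma:resolvents:yxzx} with $\lambda = 1$ and $\eta = 0$ is also valid, so there is nothing to flag.
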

\subsection{Miscellany}
 
Results presented in this subsection will facilitate some proofs later.

The identity shown in \cref{fact:lambdat2} below is essentially given on \cite[Page~4]{GuYang2019} to illustrate the linear convergence of a special instance of the exact version of the generalized proximal point algorithm. 
\begin{fact} {\rm \cite[Page~4]{GuYang2019}} \label{fact:lambdat2}
	Let $ t$ be in $\mathbb{R} \smallsetminus \{-1\}$ and let $\lambda$ be in $ \mathbb{R}$.  Then 
	\begin{align*}
	\lr{1-\frac{\lambda}{t+1}}^{2}  -  \lr{ 1 - \lambda \lr{2-\lambda} \frac{1}{1+t^{2}} } = \frac{ 2t\lambda }{  \lr{1+t^{2}} (t+1)^{2}} \lr{1-\lambda -t^{2}}.
	\end{align*}
\end{fact}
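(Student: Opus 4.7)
The identity is a purely algebraic manipulation with no monotone-operator content, so the plan is to clear denominators and match coefficients. First I would expand the square on the left-hand side,
\begin{align*}
\lr{1-\frac{\lambda}{t+1}}^{2} = 1 - \frac{2\lambda}{t+1} + \frac{\lambda^{2}}{(t+1)^{2}},
\end{align*}
so that the two constant $1$'s cancel and the left-hand side becomes
\begin{align*}
-\frac{2\lambda}{t+1} + \frac{\lambda^{2}}{(t+1)^{2}} + \frac{\lambda(2-\lambda)}{1+t^{2}}.
\end{align*}
Note that the assumption $t \neq -1$ ensures that every fraction that appears is well-defined.

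Next I would place the three summands over the common denominator $(1+t^{2})(t+1)^{2}$, which is exactly the denominator on the right-hand side of the claim. The identity then reduces to verifying the polynomial equality
\begin{align*}
-2\lambda(t+1)(1+t^{2}) + \lambda^{2}(1+t^{2}) + (2\lambda-\lambda^{2})(t+1)^{2} = 2t\lambda\lr{1-\lambda-t^{2}}
\end{align*}
in the two indeterminates $t$ and $\lambda$. Expanding each term on the left and grouping by powers of $t$, the constant terms contribute $-2\lambda + \lambda^{2} + 2\lambda - \lambda^{2} = 0$, the $t^{2}$ terms contribute $-2\lambda + \lambda^{2} + 2\lambda - \lambda^{2} = 0$, the $t$ coefficient collapses to $-2\lambda + 4\lambda - 2\lambda^{2} = 2\lambda(1-\lambda)$, and the $t^{3}$ coefficient to $-2\lambda$. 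Comparing with the expansion $2t\lambda - 2t\lambda^{2} - 2t^{3}\lambda$ of the right-hand side confirms the identity.

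The only real obstacle is the bookkeeping of the constant and $t^{2}$ cancellations between the three summands; both rely on the pairing $-2\lambda + \lambda^{2}$ from the first two terms of the left-hand side exactly cancelling the $2\lambda - \lambda^{2}$ coming from $(2\lambda - \lambda^{2})(t+1)^{2}$. Once these cancellations are recorded, the remaining $t$ and $t^{3}$ coefficients match the right-hand side line by line, and no further case analysis (in particular no separate treatment of $\lambda = 0$ or $t = 0$) is needed.
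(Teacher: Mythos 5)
Your verification is correct: all four coefficient computations (the vanishing of the constant and $t^{2}$ terms, and the $t$ and $t^{3}$ coefficients $2\lambda(1-\lambda)$ and $-2\lambda$) check out, and they reassemble into the numerator $2t\lambda\lr{1-\lambda-t^{2}}$ as claimed. The paper itself offers no proof of this identity --- it is stated as a Fact imported from \cite[Page~4]{GuYang2019} --- so there is no in-paper argument to compare against; your direct expansion over the common denominator $(1+t^{2})(t+1)^{2}$ is the natural and complete way to verify it.
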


The following result will be used to compare  convergence rates of generalized proximal point algorithms later. 
\begin{corollary} \label{corollary:lambda=1}
	Let $ t$ be in $\mathbb{R}_{+} $. Then $	\lr{1-\frac{1}{t+1}}^{2}  \leq  \lr{ 1 -  \frac{1}{1+t^{2}} }$.
\end{corollary}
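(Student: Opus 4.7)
The plan is to apply \cref{fact:lambdat2} with the specific choice $\lambda = 1$ and then observe that the resulting expression is manifestly nonpositive on $\mathbb{R}_{+}$.

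More concretely, since $t \in \mathbb{R}_{+}$, we have $t \geq 0 > -1$, so $t \in \mathbb{R} \smallsetminus \{-1\}$ and \cref{fact:lambdat2} applies. Substituting $\lambda = 1$ into the identity yields
\begin{align*}
\lr{1 - \frac{1}{t+1}}^{2} - \lr{1 - \frac{1}{1+t^{2}}}
= \frac{2t}{(1+t^{2})(t+1)^{2}} \lr{1 - 1 - t^{2}}
= -\frac{2 t^{3}}{(1+t^{2})(t+1)^{2}}.
\end{align*}
Since $t \geq 0$, the numerator $2t^{3}$ is nonnegative and the denominator $(1+t^{2})(t+1)^{2}$ is strictly positive, so the right-hand side is nonpositive. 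Rearranging gives the desired inequality.

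There is no real obstacle here; the only subtlety is confirming that $t = -1$ is excluded (which is automatic for $t \in \mathbb{R}_{+}$) so that \cref{fact:lambdat2} is indeed applicable. Once the specialization $\lambda = 1$ kills the factor $1 - \lambda = 0$ inside the last bracket — leaving just $-t^{2}$ — the sign of the whole expression is immediate.
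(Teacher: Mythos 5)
Your proposal is correct and follows exactly the same route as the paper: apply \cref{fact:lambdat2} with $\lambda=1$ and observe that the resulting factor $\lr{1-1-t^{2}}=-t^{2}$ makes the right-hand side nonpositive for $t\in\mathbb{R}_{+}$. The extra remark that $t=-1$ is automatically excluded is a harmless (and valid) clarification.
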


\begin{proof}
	Apply \cref{fact:lambdat2} with $\lambda =1$ to deduce that 
	\begin{align*}
	\lr{1-\frac{1}{t+1}}^{2}  -  \lr{ 1 - \frac{1}{1+t^{2}} }  = \frac{ 2t  }{  \lr{1+t^{2}} (t+1)^{2}} \lr{1-1 -t^{2}}  
	= -t^{2} \frac{ 2t  }{  \lr{1+t^{2}} (t+1)^{2}} \leq 0,
	\end{align*}
	which leads to the required inequality. 
\end{proof}

\cref{lemma:uvINEQ} will play a critical role in the proof of \cref{theorem:MetrSubregOptimalBounds}\cref{theorem:MetrSubregOptimalBounds:leq1}   later.
\begin{lemma} \label{lemma:uvINEQ}
	Let $u$ and $v$ be in $\mathcal{H} $ and let $t \in \mathbb{R}_{++}$ such that 
	\begin{align} \label{eq:lemma:uvINEQ}
	\norm{v} \leq t \norm{u-v}.
	\end{align}
	Then the following hold. 
	\begin{enumerate}
		\item \label{lemma:uvINEQ:INEQ} $\lr{\forall \lambda \in \left[0,1\right] }$ $\norm{(1-\lambda)u +\lambda v}^{2} 
		\leq   \lr{1-\frac{\lambda}{t+1}}^{2} \norm{u}^{2} +\lambda(t^{2}+\lambda-1) \norm{\frac{\sqrt{t}}{1+t}u-\frac{1}{\sqrt{t}}v}^{2}$.

		Moreover, the equality holds     if and only if $\norm{v} = t \norm{u-v}$.

		\item \label{lemma:uvINEQ:RestrictLambda} Suppose that  $0 \leq \lambda \leq 1-t^{2}$. Then
		\begin{align*}
		\norm{(1-\lambda)u +\lambda v}^{2} \leq \lr{1-\frac{\lambda}{t+1}}^{2} \norm{u}^{2} \leq \norm{u}^{2}.
		\end{align*}
	\end{enumerate} 
\end{lemma}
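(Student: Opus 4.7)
The plan is to reformulate the hypothesis as the single slack inequality $t^{2}\norm{u-v}^{2}-\norm{v}^{2}\ge 0$, and then to verify by direct algebraic expansion that (RHS$-$LHS) in part \cref{lemma:uvINEQ:INEQ} factors as a manifestly nonnegative scalar multiple of this slack. Once \cref{lemma:uvINEQ:INEQ} is established, part \cref{lemma:uvINEQ:RestrictLambda} drops out by observing that the second summand on the right-hand side of \cref{lemma:uvINEQ:INEQ} carries the sign of $t^{2}+\lambda-1$.

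For \cref{lemma:uvINEQ:INEQ}, my first step is to expand the three squared norms in terms of $\norm{u}^{2}$, $\innp{u,v}$, and $\norm{v}^{2}$. In particular I will use
\begin{align*}
\norm{\tfrac{\sqrt{t}}{1+t}u-\tfrac{1}{\sqrt{t}}v}^{2}
= \tfrac{t}{(1+t)^{2}}\norm{u}^{2}-\tfrac{2}{1+t}\innp{u,v}+\tfrac{1}{t}\norm{v}^{2},
\end{align*}
together with the elementary expansions of $\norm{(1-\lambda)u+\lambda v}^{2}$ and of $(1-\tfrac{\lambda}{t+1})^{2}\norm{u}^{2}$. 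Then I will collect coefficients of $\norm{u}^{2}$, $\innp{u,v}$, $\norm{v}^{2}$ in RHS$-$LHS. The target identity that I expect to verify — and which is the computational heart of the argument — is
\begin{align*}
\text{RHS} - \text{LHS}
= \frac{\lambda(t+1-\lambda)}{t(t+1)}\bigl(t^{2}\norm{u-v}^{2}-\norm{v}^{2}\bigr).
\end{align*}
A useful shortcut will be to factor $(1-\tfrac{\lambda}{t+1})^{2}-(1-\lambda)^{2}=\tfrac{\lambda t}{(t+1)^{2}}[2(t+1)-\lambda(t+2)]$ via the difference-of-squares identity, which keeps the $\norm{u}^{2}$-coefficient tractable. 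For $\lambda\in[0,1]$ and $t>0$ one has $t+1-\lambda\ge t>0$, so the prefactor is nonnegative, and the bracket is nonnegative by \cref{eq:lemma:uvINEQ}; this yields the asserted inequality. Equality forces $t^{2}\norm{u-v}^{2}=\norm{v}^{2}$ whenever $\lambda\in\left]0,1\right]$, since the prefactor is then strictly positive.

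For \cref{lemma:uvINEQ:RestrictLambda}, the assumption $0\le\lambda\le 1-t^{2}$ forces $t\le 1$ and $t^{2}+\lambda-1\le 0$, so the $\norm{\tfrac{\sqrt{t}}{1+t}u-\tfrac{1}{\sqrt{t}}v}^{2}$ term in the bound from \cref{lemma:uvINEQ:INEQ} contributes nonpositively, giving the first inequality. For the second, note that $\tfrac{\lambda}{t+1}\in[0,\tfrac{1}{t+1}]\subseteq[0,1]$, whence $(1-\tfrac{\lambda}{t+1})^{2}\le 1$.

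The main obstacle is purely bookkeeping: producing the factorization displayed above without sign errors. The choice of the auxiliary vector $\tfrac{\sqrt{t}}{1+t}u-\tfrac{1}{\sqrt{t}}v$ is engineered precisely so that the ratio of its $\innp{u,v}$- and $\norm{v}^{2}$-coefficients matches what appears in the expansion of $\norm{(1-\lambda)u+\lambda v}^{2}$, enabling the common factor $\tfrac{\lambda(t+1-\lambda)}{t+1}$ to be extracted; keeping this structural motivation in mind will be what prevents the algebra from becoming unwieldy.
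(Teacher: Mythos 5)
Your proposal is correct and follows essentially the same route as the paper: your prefactor $\frac{\lambda(t+1-\lambda)}{t(t+1)}$ is exactly the paper's multiplier $\zeta=\frac{\lambda(1-\lambda)}{t}+\frac{\lambda^{2}}{1+t}$, and your identity $\text{RHS}-\text{LHS}=\zeta\lr{t^{2}\norm{u-v}^{2}-\norm{v}^{2}}$ is precisely the computation the paper carries out (merely written as adding $\zeta$ times the slack to the left-hand side rather than subtracting). Your handling of the equality case is, if anything, slightly more careful, since you note the restriction $\lambda\in\left]0,1\right]$ needed for the \enquote{only if} direction.
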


\begin{proof}
	\cref{lemma:uvINEQ:INEQ}:
	Let $\lambda \in \left[0,1\right]$. Set $\zeta:= \frac{\lambda (1-\lambda)}{t} + \frac{\lambda^{2}}{1+t}$.  Since $\lambda \in \left[0,1\right]$ and $t \in \mathbb{R}_{++}$, we know that $\zeta \geq 0$. This combined with \cref{eq:lemma:uvINEQ} guarantees that
	\begin{subequations}  \label{eq:lemma:uvINEQ:norm}
		\begin{align}
		&\norm{(1-\lambda)u + \lambda v}^{2} \\
		\leq  & \norm{(1-\lambda)u +\lambda v}^{2} +\zeta \left(   t^{2} \norm{u-v}^{2} -\norm{v}^{2}  \right)\\
		=  	&(1-\lambda)^{2} \norm{u}^{2} +  \lambda^{2} \norm{v}^{2}  
		+ 2 \lambda (1-\lambda) \innp{u,v} +
		\zeta t^{2} \left( \norm{u}^{2} -2\innp{u,v} +\norm{v}^{2} \right) -\zeta \norm{v}^{2} \\
		= 	& \lr{1-\frac{\lambda}{t+1}}^{2} \norm{u}^{2} + \left( (1-\lambda)^{2} -  \lr{1-\frac{\lambda}{t+1}}^{2} +\zeta t^{2}  \right) \norm{u}^{2} 
		+2 \left( \lambda (1-\lambda) -\zeta t^{2} \right) \innp{u,v}  \notag  \\
		&+ \left( \lambda^{2} +\zeta (t^{2}-1) \right) \norm{v}^{2}.
		\end{align}
	\end{subequations}	 
	On the other hand, by some elementary algebra, it is easy to establish that 
	\begin{subequations}  \label{eq:lemma:uvINEQ:lambda}
		\begin{align}
		&(1-\lambda)^{2} -  \lr{1-\frac{\lambda}{t+1}}^{2} +\zeta t^{2} = \frac{t\lambda}{(1+t)^{2}}\lr{t^{2}+\lambda-1};\\
		& \lambda (1-\lambda) -\zeta t^{2} = - \frac{\lambda}{1+t} \lr{t^{2}+\lambda -1};\\
		& \lambda^{2} +\zeta (t^{2}-1)  =\frac{\lambda}{t} \lr{t^{2}+\lambda-1}.
		\end{align}	
	\end{subequations}
	Combine \cref{eq:lemma:uvINEQ:norm} and \cref{eq:lemma:uvINEQ:lambda} to ensure that
	\begin{align*}
	\norm{(1-\lambda)u + \lambda v}^{2} 
	\leq & \lr{1-\frac{\lambda}{t+1}}^{2} \norm{u}^{2} +\lambda  \lr{t^{2}+\lambda-1} \lr{  \frac{t}{(1+t)^{2}}\norm{u}^{2} -  \frac{2}{1+t} \innp{u,v} +\frac{1}{t} \norm{v}^{2}}\\
	= & \lr{1-\frac{\lambda}{t+1}}^{2} \norm{u}^{2} +\lambda(t^{2}+\lambda-1) \norm{\frac{\sqrt{t}}{1+t}u-\frac{1}{\sqrt{t}}v}^{2}.
	\end{align*}
	In addition, based on our proof above, it is clear that the inequality above turns to an equality if and only if $\norm{v} = t \norm{u-v}$.
	
	\cref{lemma:uvINEQ:RestrictLambda}: Inasmuch as $\lambda \geq 0$ and $t > 0$, we have that
	\begin{subequations}
		\begin{align}
		&\lambda \leq 1-t^{2} \Leftrightarrow t^{2}+\lambda-1 \leq 0; \label{eq:lemma:uvINEQ:RestrictLambda:lambda}\\
		& \left( 1 -\frac{\lambda}{t+1}\right)^{2} \leq 1 \Leftrightarrow 0\leq \frac{\lambda}{t+1} \leq 2 \Leftrightarrow \lambda \leq 2 (t+1). \label{eq:lemma:uvINEQ:RestrictLambda:bracket}
		\end{align}
	\end{subequations} 
	Notice that $0 \leq \lambda \leq 1-t^{2}$ and that $\lambda \leq 1-t^{2} \leq 1 \leq 2 (t+1)$.  Hence, as a consequence of \cref{lemma:uvINEQ:INEQ}, the first and second inequalities in \cref{lemma:uvINEQ:RestrictLambda} follow directly from  \cref{eq:lemma:uvINEQ:RestrictLambda:lambda} and \cref{eq:lemma:uvINEQ:RestrictLambda:bracket}, respectively.
\end{proof}

\begin{remark}  \label{remark:uvINEQ}
	\cref{lemma:uvINEQ} is inspired by the second part of the proof of \cite[Theorem~3.1]{GuYang2019} which works on the convergence rate of a special instance of the exact version of the  generalized proximal point algorithm in $\mathbb{R}^{n}$.  We assume there is a tiny typo, $\frac{t_{k}}{t^{2}_{k} +1}$ should be $\frac{t_{k}}{(t_{k} +1)^{2}}$, in the equation $(3.10)$ of the proof of \cite[Theorem~3.1]{GuYang2019}. 
	Hence, the part after $(3.10)$ in the proof of  \cite[Theorem~3.1]{GuYang2019} could have been simplified. 
\end{remark}

\begin{lemma} \label{lemma:cauchyproduct}
	Let $(\varepsilon_{k})_{k \in \mathbb{N}}$ be in $\mathbb{R}_{+}$ and let $(\rho_{k})_{k \in \mathbb{N}}$ be in $\left[0,1\right]$. Define 
	\begin{align*}
	(\forall k \in \mathbb{N}) \quad \chi_{k}:=\prod^{k}_{i=0} \rho_{i} \text{ and } \xi_{k} := \sum^{k}_{i=0} \lr{ \prod^{k}_{j=i+1} \rho_{j} } \varepsilon_{i}.
	\end{align*}
	Suppose that $\sum_{k \in \mathbb{N}} \varepsilon_{k} <\infty$ and that $ \limsup_{k \to \infty} \lr{\chi_{k} }^{\frac{1}{k}} <1$.  Then the following hold. 
	\begin{enumerate}

		\item \label{lemma:cauchyproduct:chi} $\sum_{k \in \mathbb{N}} \chi_{k} < \infty$ and $\lim_{k \to \infty} \chi_{k} =0$.
		\item \label{lemma:cauchyproduct:sum} Suppose  that  $(\forall k \in \mathbb{N})$ $\rho_{k+1} \leq \rho_{k}$. Then $\sum_{k \in \mathbb{N}} \xi_{k} < \infty$. 
	\end{enumerate}
\end{lemma}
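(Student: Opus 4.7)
The plan is to prove part \cref{lemma:cauchyproduct:chi} as a direct application of the root test. Setting $r := \limsup_{k \to \infty} \chi_k^{1/k} < 1$ and fixing any $s \in (r,1)$, I would extract $N \in \mathbb{N}$ such that $\chi_k \leq s^k$ for all $k \geq N$, from which $\sum_k \chi_k$ is dominated by a finite head plus a geometric tail. Both the convergence of $\sum_k \chi_k$ and $\lim_k \chi_k = 0$ then follow immediately.

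For part \cref{lemma:cauchyproduct:sum}, my plan is to swap the order of summation in the partial sums, writing
\begin{align*}
\sum_{k=0}^{N} \xi_k \;=\; \sum_{i=0}^{N} \varepsilon_i \sum_{k=i}^{N} \prod_{j=i+1}^{k} \rho_j,
\end{align*}
and then passing to the limit as $N \to \infty$ to obtain $\sum_{k \in \mathbb{N}} \xi_k = \sum_{i \in \mathbb{N}} \varepsilon_i S_i$, where $S_i := \sum_{k=i}^{\infty} \prod_{j=i+1}^{k} \rho_j$ with the convention that the empty product (the $k=i$ term) equals $1$. Since $\sum_i \varepsilon_i < \infty$ by hypothesis, it will be enough to establish a uniform bound $\sup_{i \in \mathbb{N}} S_i < \infty$.

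For the uniform bound, the key observation is that part \cref{lemma:cauchyproduct:chi}, combined with the monotonicity hypothesis $\rho_{k+1} \leq \rho_k$, forces $\rho_k$ to be eventually strictly less than $1$: being nonincreasing in $[0,1]$, the sequence $(\rho_k)_{k \in \mathbb{N}}$ converges to some $\rho_\infty \in [0,1]$; were $\rho_\infty = 1$, monotonicity would yield $\rho_k \equiv 1$ and hence $\chi_k \equiv 1$, contradicting $\lim_k \chi_k = 0$ from \cref{lemma:cauchyproduct:chi}. Fixing $s \in (\rho_\infty, 1)$ and some $K \in \mathbb{N}$ with $\rho_k \leq s$ for all $k \geq K$, I would split the sum defining $S_i$ according to whether $k < K$ or $k \geq K$: the head contributes at most $K$ terms each bounded by $1$, while the tail is dominated by $\sum_{m \geq 0} s^m = 1/(1-s)$. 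This yields $S_i \leq K + 1/(1-s)$ for every $i \in \mathbb{N}$, and therefore $\sum_i \varepsilon_i S_i \leq \bigl(K + 1/(1-s)\bigr) \sum_i \varepsilon_i < \infty$.

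The main obstacle is precisely this uniform bound on $S_i$: without monotonicity one could not conclude that $\rho_k$ is eventually bounded away from $1$, and the geometric domination of the tail would fail. Monotonicity is the bridge that converts the asymptotic information $\chi_k \to 0$ from part \cref{lemma:cauchyproduct:chi} into a pointwise bound on the factors $\rho_j$ themselves, which is what the splitting argument needs.
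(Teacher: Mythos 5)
Your proof of part (i) is the same root-test argument the paper uses, so there is nothing to add there. For part (ii) you take a genuinely different, and correct, route. The paper keeps the products anchored at the origin: it rewrites $\prod_{j=i+1}^{k}\rho_{j}\,\varepsilon_{i}$ as $\chi_{k-i}\varepsilon_{i}$ times a correction ratio, uses monotonicity to bound that ratio by $1/\rho_{0}$ (after first disposing of the case where some $\rho_{k}=0$, so that the division is legitimate), and then invokes the theorem on the Cauchy product of two absolutely convergent series applied to $\sum_{k}\chi_{k}$ and $\sum_{k}\varepsilon_{k}$ --- so it genuinely consumes the conclusion $\sum_{k}\chi_{k}<\infty$ of part (i). You instead swap the order of summation (legitimate since all terms are nonnegative) and bound the tail sums $S_{i}$ uniformly, using monotonicity only to convert $\chi_{k}\to 0$ into the statement that $\rho_{k}\leq s<1$ for all $k\geq K$; the resulting estimate $S_{i}\leq K+1/(1-s)$ is correct (split at $k=K$, bound each of the at most $K$ head products by $1$ since every factor lies in $\left[0,1\right]$, and dominate the tail by a geometric series). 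Your version is more elementary --- no Mertens-type Cauchy-product theorem, no division by $\rho_{0}$, no separate case for vanishing $\rho_{k}$ --- and it uses only $\lim_{k\to\infty}\chi_{k}=0$ rather than the summability of $(\chi_{k})_{k\in\mathbb{N}}$. Both arguments rely on the monotonicity hypothesis in an essential way, just at different points: the paper to control the ratio of shifted products, you to force the factors themselves to be eventually bounded away from $1$.
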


\begin{proof}

	\cref{lemma:cauchyproduct:chi}: Because $\chi := \limsup_{k \to \infty} \lr{\chi_{k} }^{\frac{1}{k}} <1$, there exists $\varepsilon >0$ and $K \in \mathbb{N}$ such that $\chi + \varepsilon <1$ and 
	\begin{align}\label{eq:lemma:cauchyproduct:chi:EQ}
	\lr{\forall k \geq K} \quad 
	\lr{\chi_{k} }^{\frac{1}{k}} < \chi + \varepsilon, \quad \text{that is,} \quad
	\chi_{k} < \lr{ \chi + \varepsilon}^{k}.
	\end{align}
	Hence,
	\begin{align*}
	(\forall k \geq K) \quad \sum^{k}_{i=0} \chi_{i} =  \sum^{K }_{i=0} \chi_{i} +\sum^{k}_{i=K +1} \chi_{i} 
	\stackrel{\cref{eq:lemma:cauchyproduct:chi:EQ}}{\leq }   
	\sum^{K }_{i=0} \chi_{i} +\sum_{i \in \mathbb{N}}  \lr{ \chi + \varepsilon}^{i} 
	= \sum^{K}_{i=0} \chi_{i} + \frac{1}{1- \lr{ \chi + \varepsilon}} < \infty,\footnotemark
	\end{align*}
	which yields that $\sum_{k \in \mathbb{N}} \chi_{k} < \infty$ and $\lim_{k \to \infty} \chi_{k} =0$.
	
		\footnotetext{In the whole work, we use the empty sum convention. Hence, if $k = K$ in this inequality, then  $\sum^{k}_{i=K +1} \chi_{i} =0$.}
		
	\cref{lemma:cauchyproduct:sum}: Suppose that there exists $\bar{k} \in \mathbb{N}$ such that $\rho_{ \bar{k} }=0$. Then, by the assumption $(\forall k \in \mathbb{N})$ $\rho_{k+1} \leq \rho_{k}$ and $\rho_{k} \geq 0$, we know that  $(\forall k \geq \bar{k} )$ $\rho_{k} =0$. So  $(\forall k \geq \bar{k})$  $\lr{ \forall i \in \{0, 1, \ldots, k  \}}$ $ \prod^{k}_{j=i+1} \rho_{j} =0$, which implies that $(\forall k \geq \bar{k})$ $  \xi_{k} =0$. Hence, in this case, it is trivial that $\sum_{k \in \mathbb{N}} \xi_{k} < \infty$.

	Suppose that $(\forall k \in \mathbb{N})$ $\rho_{k} > 0$.
	Notice that 
	\begin{align*}
	(\forall k \in \mathbb{N}) \quad  \xi_{k} = \sum^{k}_{i=0} \lr{\prod^{k}_{j=i+1} \rho_{j}} \varepsilon_{i}=  \sum^{k}_{i=0} \lr{  \lr{\prod^{k-i}_{j=0} \rho_{j}}  \varepsilon_{i} \cdot \frac{ \lr{\prod^{k}_{j=i+1} \rho_{j} }}{ \lr{\prod^{k-i}_{j=0} \rho_{j} }} } 
	\end{align*}
	and that for every $k \in \mathbb{N}$ and $i \in \{0,1,\ldots,k\}$
	\begin{align*}
	\frac{ \prod^{k}_{j=i+1} \rho_{j} }{\prod^{k-i}_{j=0} \rho_{j} } =&
	\begin{cases}
	\frac{ \prod^{k}_{j=i+1} \rho_{j} }{\prod^{k-i}_{j=0} \rho_{j} } \quad  &\text{if } k-i \leq i+1;\\
	\frac{ \prod^{k}_{j=k-i+1} \rho_{j} }{\prod^{i}_{j=0} \rho_{j} } \quad &\text{if } k-i > i+1,\\
	\end{cases}\\
	=&\begin{cases}
	\frac{1}{\rho_{0}} \prod^{k-i}_{j=1} \frac{  \rho_{j+i} }{ \rho_{j} } \quad &\text{if } k-i \leq i+1;\\
	\frac{1}{\rho_{0}}  \prod^{i}_{j= 1}  \frac{ \rho_{j+k-i} }{  \rho_{j} } \quad &\text{if } k-i > i+1,\\
	\end{cases}\\
	\leq & \frac{1}{\rho_{0}}, 
	\end{align*}
	where we invoke the assumption $(\forall k \in \mathbb{N})$ $\rho_{k+1} \leq \rho_{k}$  in the last inequality. 
	
	On the other hand,  due to \cite[Page~80]{Rudin1976}, the Cauchy product  $	\sum_{k \in \mathbb{N}}	\sum^{k}_{i=0}  \chi_{k-i} \varepsilon_{i} $ of the two absolutely convergent series $\sum_{k \in \mathbb{N}} \varepsilon_{k}  $ and $\sum_{k \in \mathbb{N}} \chi_{k}  $   is absolutely  convergent, that is, 
	\begin{align} \label{eq:lemma:cauchyproduct:sum:cauchy}
	\sum_{k \in \mathbb{N}}	\sum^{k}_{i=0}  \chi_{k-i} \varepsilon_{i} < \infty.
	\end{align}
	Taking all results obtained above into account, we derive that
	\begin{align*}
	\sum_{k \in \mathbb{N}} \xi_{k}  \leq \frac{1}{\rho_{0}} \sum_{k \in \mathbb{N}}  \sum^{k}_{i=0} \lr{ \prod^{k-i}_{j=0}  \rho_{j} } \varepsilon_{i} =  \frac{1}{\rho_{0}} \sum_{k \in \mathbb{N}}   \sum^{k}_{i=0}  \chi_{k-i} \varepsilon_{i} \stackrel{\cref{eq:lemma:cauchyproduct:sum:cauchy}}{<} \infty,
	\end{align*}
	which reaches the required inequality in \cref{lemma:cauchyproduct:sum}. 
\end{proof}

\section{Metrical Subregularity} \label{section:MetricalSubregularity}
Metrical subregularity is a critical assumption for the linear convergence  of algorithms studied in this work. In this section, we exhibit results related to metrical subregularity.
\begin{definition}  \label{definition:metricsubregularity}  {\rm \cite[Pages~183 and 184]{DontchevRockafellar2014}}
	Let $F: \mathcal{H} \to 2^{\mathcal{H}}$ be a set-valued operator. $F$ is called  \emph{metrically subregular at $\bar{x}$ for $\bar{y}$} if $\lr{ \bar{x}, \bar{y} } \in \gra F$ and there exist  $\kappa \in \mathbb{R}_{+}$ and a neighborhood $U$ of $\bar{x}$ such that  
	\begin{align*}
	(\forall x \in U) \quad	\dist \lr{ x, F^{-1} (\bar{y})} \leq \kappa \dist \lr{\bar{y}, F(x)}.
	\end{align*}
	The constant $\kappa$ is called \emph{constant of metric subregularity}.
	The infimum of all $\kappa$ for which the inequality above holds is the \emph{modulus of metric subregularity}, denoted by $\subreg \lr{F;\bar{x}|\bar{y}}$. The absence of metric subregularity is signaled by 
	$\subreg \lr{F;\bar{x}|\bar{y}} =\infty$.
\end{definition}

\begin{fact} \label{lemma:metricallysubregularEQ} {\rm \cite[Lemma~2.19]{OuyangStabilityKMIterations2022}}
	Let  $A: \mathcal{H} \to 2^{\mathcal{H}}$ be   maximally monotone with $\zer A \neq \varnothing$, let $\bar{x} \in \zer A$, and let $\gamma \in \mathbb{R}_{++}$.    Then $A$ is metrically subregular at $\bar{x}$ for $0 \in A\bar{x}$ if and only if $  \Id -\J_{\gamma A}  $ is metrically subregular at $\bar{x}$ for $0 = \lr{\Id -\J_{\gamma A}} \bar{x}$. In particular, if $A$ is metrically subregular at $\bar{x}$ for $0 \in A\bar{x}$, i.e., 
	\begin{align*} 
	(\exists \kappa >0) (\exists \delta >0) (\forall x \in B[\bar{x}; \delta]) \quad \dist \lr{x, A^{-1}0} \leq \kappa \dist \lr{0, Ax},
	\end{align*}
	then  $  \Id -\J_{\gamma A}  $ is metrically subregular at $\bar{x}$ for $0 = \lr{\Id -\J_{\gamma A} } \bar{x}$; more specifically,
	\begin{align*}
	(\forall x \in B[\bar{x}; \delta]) \quad 	\dist \lr{x, \lr{\Id -\J_{\gamma A}}^{-1}0} \leq \lr{1 + \frac{\kappa}{\gamma}} \dist \lr{0,  \lr{\Id -\J_{\gamma A}} x}.
	\end{align*}	
\end{fact}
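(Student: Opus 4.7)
The plan is to exploit two ingredients: the identity $A^{-1}(0) = \zer A = \Fix \J_{\gamma A} = (\Id - \J_{\gamma A})^{-1}(0)$ from \cref{cor:fact:FixJcAzerA}, and the explicit selection $\frac{1}{\gamma}(x - \J_{\gamma A}x) \in A(\J_{\gamma A}x)$ for every $x \in \mathcal{H}$ from \cref{corollary:JcA}. Together these let one translate a residual bound for $\Id - \J_{\gamma A}$ into one for $A$ and vice versa, so the whole proof reduces to a triangle-inequality sandwich between $x$ and $\J_{\gamma A}x$ in one direction and between $x$ and $x + \gamma u$ in the other.

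For the quantitative forward direction, I would fix $x \in B[\bar x;\delta]$ and set $y := \J_{\gamma A}x$. Since $\J_{\gamma A}$ is (firmly) nonexpansive by \cref{fact:cAMaximallymonotone} and $\J_{\gamma A}\bar x = \bar x$ by \cref{cor:fact:FixJcAzerA}, one has $\|y - \bar x\| \le \|x - \bar x\| \le \delta$, so $y$ lies inside the subregularity neighborhood for $A$. Combining $\dist(0, Ay) \le \frac{1}{\gamma}\|x - y\|$ (from \cref{corollary:JcA}) with metric subregularity of $A$ at $\bar x$ yields $\dist(y, A^{-1}(0)) \le \frac{\kappa}{\gamma}\|x - y\|$. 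The triangle inequality for the distance-to-a-set function then produces
\begin{equation*}
\dist(x, A^{-1}(0)) \le \|x - y\| + \dist(y, A^{-1}(0)) \le \lr{1 + \tfrac{\kappa}{\gamma}}\|x - y\|.
\end{equation*}
Single-valuedness of $\Id - \J_{\gamma A}$ converts $\|x - y\|$ to $\dist(0, (\Id - \J_{\gamma A})x)$, and the identity $A^{-1}(0) = (\Id - \J_{\gamma A})^{-1}(0)$ rewrites the left-hand side, giving the stated quantitative estimate and the ``only if'' direction of the equivalence.

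For the qualitative converse, I would reverse the roles. Given $x$ close to $\bar x$ and any $u \in Ax$, the resolvent identity $\J_{\gamma A}(x + \gamma u) = x$ gives $(\Id - \J_{\gamma A})(x + \gamma u) = \gamma u$. When $x + \gamma u$ lies inside the subregularity neighborhood of $\Id - \J_{\gamma A}$, applying that estimate and triangulating back to $x$ produces $\dist(x, A^{-1}(0)) \le \gamma(1 + \kappa')\|u\|$, and then infimizing over admissible $u \in Ax$ delivers metric subregularity of $A$.

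The main technical hurdle is this converse, because the auxiliary point $x + \gamma u$ depends on an a priori unbounded $u$ and need not stay in the prescribed neighborhood. I would handle this by splitting on $\|u\|$ with a shrunken radius $\delta_A \le \delta'/2$: when there exists $u \in Ax$ with $\|u\| \le \delta'/(2\gamma)$, then $x + \gamma u \in B[\bar x;\delta']$ and the argument above applies; otherwise every $u \in Ax$ satisfies $\|u\| > \delta'/(2\gamma)$, so $\dist(0, Ax) \ge \delta'/(2\gamma)$ and the trivial bound $\dist(x, A^{-1}(0)) \le \|x - \bar x\| \le \delta_A \le \gamma\,\dist(0, Ax)$ using $\bar x \in A^{-1}(0)$ already closes the estimate.
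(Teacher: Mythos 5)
Your proof is correct. Note that the paper itself offers no proof of this statement: it is imported verbatim as a Fact from \cite[Lemma~2.19]{OuyangStabilityKMIterations2022}, so there is no internal argument to compare against; what you have written is a self-contained justification. Your forward (quantitative) direction is exactly the standard argument one would expect behind the cited lemma: nonexpansiveness of $\J_{\gamma A}$ keeps $y=\J_{\gamma A}x$ inside $B[\bar x;\delta]$, the selection $\tfrac{1}{\gamma}(x-y)\in Ay$ from \cref{corollary:JcA} gives $\dist(0,Ay)\le\tfrac{1}{\gamma}\|x-y\|$, and the triangle inequality together with $A^{-1}0=\Fix\J_{\gamma A}=(\Id-\J_{\gamma A})^{-1}0$ from \cref{cor:fact:FixJcAzerA} yields the constant $1+\tfrac{\kappa}{\gamma}$ exactly as stated. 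The converse is the only delicate part, and you handle its one genuine pitfall correctly: the auxiliary point $x+\gamma u$ (which satisfies $(\Id-\J_{\gamma A})(x+\gamma u)=\gamma u$ by the resolvent identity) may escape the subregularity neighborhood when $u$ is large, and your case split on $\|u\|\le \delta'/(2\gamma)$ versus $\|u\|>\delta'/(2\gamma)$, with the shrunken radius $\delta_A\le\delta'/2$ and the trivial bound $\dist(x,A^{-1}0)\le\|x-\bar x\|\le\delta_A\le\gamma\,\dist(0,Ax)$ in the second case, closes the gap. The final infimization over admissible $u$ is also legitimate, since whenever some $u\in Ax$ has $\|u\|\le\delta'/(2\gamma)$ the infimum of $\|u\|$ over such $u$ coincides with $\dist(0,Ax)$. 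The resulting constant $\gamma(1+\kappa')$ is what one expects by symmetry with the forward direction.
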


	Let $A: \mathcal{H} \to 2^{\mathcal{H}}$ be maximally monotone with $\bar{x} \in \zer A$, let $ \gamma $ and $\delta$ be in $ \mathbb{R}_{++}$, and let $x \in \mathcal{H}$. If $x \in  B[\bar{x};\delta]$, then due to \cref{lemma:JGammaAFix},   $ \J_{\gamma A}x \in B[\bar{x};\delta]$. Hence, applying \cref{lemma:JgammaAINEQ}, we easily deduce   \cite[Theorem~3.1]{Leventhal2009} and \cite[Lemma~5.3]{TaoYuan2018}. 
In fact, the proofs of \cref{lemma:JgammaAINEQ},  \cite[Theorem~3.1]{Leventhal2009},   and \cite[Lemma~5.3]{TaoYuan2018} are similar. 
\begin{lemma} \label{lemma:JgammaAINEQ} 
	Let  $A: \mathcal{H} \to 2^{\mathcal{H}}$ be  maximally monotone with $\zer A \neq \varnothing$   and let $\gamma \in \mathbb{R}_{++}$.    Assume that $A$ is metrically subregular at $\bar{x}$ for $0 \in A\bar{x}$, i.e., 
	\begin{align} \label{eq:lemma:JgammaAINEQ:MSubr}
	(\exists \kappa >0) (\exists \delta >0)	(\forall x \in B[\bar{x};\delta]) \quad	\dist \lr{ x, A^{-1} 0} \leq \kappa \dist \lr{0, Ax}.
	\end{align}
	Then for every $x \in \mathcal{H}$ with $ \J_{\gamma A}x \in B[\bar{x};\delta]$,
	\begin{align} \label{eq:lemma:JgammaAINEQ:MetriSubR}  
	\dist \lr{ \J_{\gamma A}x, A^{-1} 0} \leq \frac{1}{ \sqrt{1 + \frac{\gamma^{2}}{\kappa^{2}} } } \dist \lr{x, A^{-1}0}.
	\end{align}	
\end{lemma}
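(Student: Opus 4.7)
The plan is to combine two ingredients: (a) the firm-nonexpansiveness-type inequality from \cref{lemma:JGammaAFix}, which converts $\dist(x,A^{-1}0)$ into $\dist(\J_{\gamma A}x,A^{-1}0)$ plus the quantity $\|(\Id-\J_{\gamma A})x\|$, and (b) the metric subregularity hypothesis applied at the point $\J_{\gamma A}x$, which gives a converse estimate bounding $\dist(\J_{\gamma A}x,A^{-1}0)$ by a multiple of $\|(\Id-\J_{\gamma A})x\|$.

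First, I would apply \cref{corollary:JcA} to observe that $\tfrac{1}{\gamma}(x-\J_{\gamma A}x)\in A(\J_{\gamma A}x)$. Since by assumption $\J_{\gamma A}x\in B[\bar{x};\delta]$, the metric subregularity \cref{eq:lemma:JgammaAINEQ:MSubr} applies at $\J_{\gamma A}x$ to yield
\begin{align*}
\dist\lr{\J_{\gamma A}x,A^{-1}0}\leq \kappa\,\dist\lr{0,A(\J_{\gamma A}x)}\leq \frac{\kappa}{\gamma}\,\norm{(\Id-\J_{\gamma A})x},
\end{align*}
which I rewrite as $\norm{(\Id-\J_{\gamma A})x}^{2}\geq \tfrac{\gamma^{2}}{\kappa^{2}}\dist\lr{\J_{\gamma A}x,A^{-1}0}^{2}$.

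Second, I would select the projection $z^{\ast}\in A^{-1}0$ of $x$ onto the (closed convex) zero set, so that $\norm{x-z^{\ast}}=\dist(x,A^{-1}0)$. Applying \cref{lemma:JGammaAFix} with this particular $z^{\ast}$ and using $\norm{\J_{\gamma A}x-z^{\ast}}\geq\dist(\J_{\gamma A}x,A^{-1}0)$ gives
\begin{align*}
\dist\lr{\J_{\gamma A}x,A^{-1}0}^{2}+\norm{(\Id-\J_{\gamma A})x}^{2}\leq \norm{x-z^{\ast}}^{2}=\dist\lr{x,A^{-1}0}^{2}.
\end{align*}
Substituting the lower bound on $\norm{(\Id-\J_{\gamma A})x}^{2}$ from the previous step, I obtain
\begin{align*}
\lr{1+\frac{\gamma^{2}}{\kappa^{2}}}\dist\lr{\J_{\gamma A}x,A^{-1}0}^{2}\leq \dist\lr{x,A^{-1}0}^{2},
\end{align*}
and taking square roots yields the required \cref{eq:lemma:JgammaAINEQ:MetriSubR}.

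There is no real obstacle here: the whole argument is a two-line combination of a descent inequality with a reverse (error-bound) inequality. The only subtle point is to remember that metric subregularity is legitimately applied at $\J_{\gamma A}x$ (not at $x$), which is exactly why the hypothesis is phrased as $\J_{\gamma A}x\in B[\bar{x};\delta]$ rather than $x\in B[\bar{x};\delta]$; this is also what makes the conclusion genuinely useful in iterative contexts, where one controls the iterate produced by the resolvent rather than its input.
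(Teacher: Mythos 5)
Your proposal is correct and follows essentially the same route as the paper's proof: both apply \cref{corollary:JcA} together with metric subregularity at the point $\J_{\gamma A}x$ to get $\dist\lr{\J_{\gamma A}x,A^{-1}0}\leq\tfrac{\kappa}{\gamma}\norm{x-\J_{\gamma A}x}$, and both combine this with the inequality of \cref{lemma:JGammaAFix} evaluated at $z=\Pro_{\zer A}x$. The only difference is the order in which the two estimates are chained together, which is purely cosmetic.
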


\begin{proof}
	As a consequence of  \cref{corollary:JcA},   
	\begin{align}   \label{eq:lemma:JgammaAINEQ:graA} 
(\forall x \in \mathcal{H}) \quad \frac{1}{\gamma} \lr{ x- \J_{\gamma A} x } \in A \lr{ \J_{\gamma A} x }. 
	\end{align}
	Due to 	\cref{lemma:JGammaAFix},
	\begin{align} \label{eq:lemma:JgammaAINEQ:ZerA} 
	(\forall x \in \mathcal{H})	(\forall z \in \zer A) \quad \norm{\J_{\gamma A} x -z}^{2} +\norm{ \lr{\Id - \J_{\gamma A}}x}^{2} \leq \norm{x -z}^{2}.
	\end{align}
By virtue of the maximal monotonicity of $A$ and  via  \cite[Proposition~23.39]{BC2017}, we know that $\zer A $ is closed and convex. So $\Pro_{\zer A}x \in \zer A$ is well-defined. 
	Substitute $z=\Pro_{\zer A}x$ in \cref{eq:lemma:JgammaAINEQ:ZerA}  to establish that 
	\begin{align} \label{eq:lemma:JgammaAINEQ:ZerAP} 
	(\forall x \in \mathcal{H}) \quad  \norm{  x - \J_{\gamma A}x}^{2} \leq \norm{x -\Pro_{\zer A}x}^{2} -\norm{\J_{\gamma A} x - \Pro_{\zer A}x }^{2}.
	\end{align}
	
	Let $x \in \mathcal{H}$ with $ \J_{\gamma A}x \in B[\bar{x};\delta]$. Applying \cref{eq:lemma:JgammaAINEQ:MSubr} with $x =\J_{\gamma A}x $ in the first inequality below and employing $\norm{\J_{\gamma A} x - \Pro_{\zer A} \lr{ \J_{\gamma A} x } } \leq \norm{\J_{\gamma A} x - \Pro_{\zer A}x }$ in the fourth inequality, we deduce that 
	\begin{align*}
	\dist^{2} \lr{ \J_{\gamma A}x  , A^{-1} 0} ~\leq~ & \kappa^{2} \dist^{2} \lr{0, A \lr{\J_{\gamma A}x} } \\
	\stackrel{\cref{eq:lemma:JgammaAINEQ:graA} }{\leq }	& \frac{ \kappa^{2} }{ \gamma^{2}} \norm{x- \J_{\gamma A} x}^{2}\\
	\stackrel{\cref{eq:lemma:JgammaAINEQ:ZerAP}}{\leq}  & \frac{ \kappa^{2} }{ \gamma^{2}} \lr{ \norm{x -\Pro_{\zer A}x}^{2} -\norm{\J_{\gamma A} x - \Pro_{\zer A}x }^{2}}\\
	~\leq~& \frac{ \kappa^{2} }{ \gamma^{2}} \norm{x -\Pro_{\zer A}x}^{2} -  \frac{ \kappa^{2} }{ \gamma^{2}} \norm{\J_{\gamma A} x - \Pro_{\zer A} \lr{ \J_{\gamma A} x}  }^{2}\\
	~=~& \frac{ \kappa^{2} }{ \gamma^{2}}	\dist^{2} \lr{ x  , A^{-1} 0}  -  \frac{ \kappa^{2} }{ \gamma^{2}}	\dist^{2} \lr{ \J_{\gamma A}x  , A^{-1} 0}, 
	\end{align*}
	which yields \cref{eq:lemma:JgammaAINEQ:MetriSubR}  easily, since it is clear that $\frac{  \frac{ \kappa^{2} }{ \gamma^{2}} }{ 1 +  \frac{ \kappa^{2} }{ \gamma^{2}}}= \frac{1}{1 +  \frac{ \gamma^{2} }{ \kappa^{2}}}$.
\end{proof}

\begin{theorem}  \label{theorem:MetrSubregUpperBound}
	Let  $A: \mathcal{H} \to 2^{\mathcal{H}}$ be  maximally monotone with $\zer A \neq \varnothing$. Let $x \in \mathcal{H}$, let $e \in \mathcal{H}$ and $\eta \in \mathbb{R}_{+}$, 
	let $\lr{  \gamma  , \varepsilon} \in \mathbb{R}_{++}^{2}$, and 	let $\lambda \in \left]0,2\right[\,$. Define
		\begin{align*}
 	y_{x} := \lr{1-\lambda} x +\lambda \J_{\gamma A}x \quad \text{and} \quad z_{x} := \lr{1-\lambda} x +\lambda \J_{\gamma A}x +\eta e=y_{x} + \eta e. 
	\end{align*}  
	Suppose that $A$ is metrically subregular at $\bar{x}$ for $0 \in A\bar{x}$, i.e., 
	\begin{align} \label{eq:theorem:MetrSubregUpperBound:MetricSub} 
	(\exists \kappa >0) (\exists \delta >0) (\forall x \in B[\bar{x}; \delta]) \quad \dist \lr{x, A^{-1}0} \leq \kappa \dist \lr{0, Ax}.
	\end{align}
	Set $\rho  := \lr{ 1 - \lambda \lr{2- \lambda} \frac{1}{ \lr{1+ \frac{\kappa}{\gamma}}^{2} }}^{\frac{1}{2}}$.
Suppose that $\J_{\gamma A}x \in B[\bar{x}; \delta]$. Then the following statements hold. 
	\begin{enumerate}
		\item  \label{theorem:MetrSubregUpperBound:rho} $\rho  \in \left]0,1\right[\,$.
		\item \label{theorem:MetrSubregUpperBound:yx}  $\norm{y_{x} - \Pro_{\zer A}x}  \leq \rho  \norm{x - \Pro_{\zer A}x}$.
		
		\item \label{theorem:MetrSubregUpperBound:zx} Suppose that $\norm{e} \leq \varepsilon \norm{x-z_{x}}$ and $\eta \varepsilon \in \left[0,1\right[\,$.	Then
		\begin{align*}
		\norm{z_{x} - \Pro_{\zer A}x}  \leq \frac{ \rho  +\eta \varepsilon }{1- \eta \varepsilon} \norm{x - \Pro_{\zer A}x}.
		\end{align*}
	\end{enumerate}
\end{theorem}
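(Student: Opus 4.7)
The plan is to handle the three items in sequence, since (iii) reduces to (ii) by a direct appeal to \cref{lemma:yxzx}, while (ii) is the substantive step and (i) is elementary.

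For \cref{theorem:MetrSubregUpperBound:rho}, I would just observe that $\lambda\in\left]0,2\right[$ forces $\lambda(2-\lambda)\in\left]0,1\right]$, while $\kappa,\gamma\in\mathbb{R}_{++}$ forces $(1+\kappa/\gamma)^{2}>1$. Hence $\lambda(2-\lambda)/(1+\kappa/\gamma)^{2}\in\left]0,1\right[$ and therefore $\rho^{2}\in\left]0,1\right[$, so $\rho\in\left]0,1\right[$.

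For \cref{theorem:MetrSubregUpperBound:yx}, the strategy has two ingredients. First, \cref{lemma:resolvents:yxzx} applied with $\bar{x}:=\Pro_{\zer A}x\in\zer A$ (which is well-defined because $\zer A$ is nonempty, closed, and convex by \cite[Proposition~23.39]{BC2017}) yields
\begin{align*}
\norm{y_{x}-\Pro_{\zer A}x}^{2}\leq \dist^{2}(x,\zer A)-\lambda(2-\lambda)\norm{x-\J_{\gamma A}x}^{2}.
\end{align*}
Second, I would lower-bound $\norm{x-\J_{\gamma A}x}$ by $\dist(x,\zer A)/(1+\kappa/\gamma)$. To get this, I would combine \cref{corollary:JcA}, which gives $\frac{1}{\gamma}(x-\J_{\gamma A}x)\in A(\J_{\gamma A}x)$, with the hypothesis $\J_{\gamma A}x\in B[\bar{x};\delta]$ and \cref{eq:theorem:MetrSubregUpperBound:MetricSub} applied at the point $\J_{\gamma A}x$, producing
\begin{align*}
\dist(\J_{\gamma A}x,\zer A)\leq \kappa\,\dist(0,A(\J_{\gamma A}x))\leq \tfrac{\kappa}{\gamma}\norm{x-\J_{\gamma A}x}.
\end{align*}
A triangle inequality through $\Pro_{\zer A}(\J_{\gamma A}x)$ then yields
\begin{align*}
\dist(x,\zer A)\leq \norm{x-\J_{\gamma A}x}+\dist(\J_{\gamma A}x,\zer A)\leq \lr{1+\tfrac{\kappa}{\gamma}}\norm{x-\J_{\gamma A}x}.
\end{align*}
Substituting the resulting inequality $\norm{x-\J_{\gamma A}x}^{2}\geq \dist^{2}(x,\zer A)/(1+\kappa/\gamma)^{2}$ into the previous display delivers exactly $\rho^{2}\norm{x-\Pro_{\zer A}x}^{2}$ on the right.

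For \cref{theorem:MetrSubregUpperBound:zx}, I would simply invoke \cref{lemma:yxzx} with $\bar{x}:=\Pro_{\zer A}x$ and $\beta:=\rho$. The error bound $\norm{e}\leq\varepsilon\norm{x-z_{x}}$ and the condition $\eta\varepsilon\in\left[0,1\right[$ are hypotheses, while $\norm{y_{x}-\Pro_{\zer A}x}\leq\rho\norm{x-\Pro_{\zer A}x}$ has just been established in \cref{theorem:MetrSubregUpperBound:yx}, so the lemma yields the stated bound $\frac{\rho+\eta\varepsilon}{1-\eta\varepsilon}\norm{x-\Pro_{\zer A}x}$.

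The only mildly delicate point is the localization in \cref{theorem:MetrSubregUpperBound:yx}: the subregularity hypothesis is formulated at $\bar{x}$, but the argument evaluates it at $\J_{\gamma A}x$. This is exactly what the neighborhood assumption $\J_{\gamma A}x\in B[\bar{x};\delta]$ is designed to allow, so there is no real obstacle; the triangle-inequality passage from $\J_{\gamma A}x$ back to $x$ is the only place one must be careful, and it is responsible for the factor $1+\kappa/\gamma$ (rather than $\kappa/\gamma$) appearing in $\rho$.
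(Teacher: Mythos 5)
Your proposal is correct and follows essentially the same route as the paper's proof: metric subregularity evaluated at $\J_{\gamma A}x$ combined with \cref{corollary:JcA}, the triangle inequality through $\Pro_{\zer A}\lr{\J_{\gamma A}x}$ to obtain $\norm{x-\Pro_{\zer A}x}\leq\lr{1+\frac{\kappa}{\gamma}}\norm{x-\J_{\gamma A}x}$, substitution into the inequality of \cref{lemma:resolvents:yxzx}, and finally \cref{lemma:yxzx} with $\beta=\rho$ for the inexact bound. No gaps.
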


\begin{proof}
	\cref{theorem:MetrSubregUpperBound:rho}: This is clear from $\lambda \in \left]0,2\right[\,$, $\lambda \lr{2- \lambda} = -(1-\lambda)^{2} +1 \leq 1$, and $\frac{1}{ \lr{1+ \frac{\kappa}{\gamma}}^{2}} \in \left]0,1\right[\,$.
	
	\cref{theorem:MetrSubregUpperBound:yx}: Applying \cref{eq:theorem:MetrSubregUpperBound:MetricSub} with $x=\J_{\gamma A}x$ in the first  inequality and employing  \cref{corollary:JcA}  in the   second inequality,  we know that 
	\begin{align} \label{theorem:MetrSubregUpperBound:yx:dist}
\norm{\J_{\gamma A}x -  \Pro_{\zer A} \lr{\J_{\gamma A}x}} =	\dist \lr{ \J_{\gamma A}x, A^{-1}0 } \leq \kappa \dist \lr{0, A \lr{\J_{\gamma A}x}} \leq \frac{\kappa}{\gamma} \norm{x -  \J_{\gamma A}x}.
	\end{align}
	Hence,
	\begin{subequations} \label{theorem:MetrSubregUpperBound:yx:kappagamma}
		\begin{align} 
		\norm{x - \Pro_{\zer A}x} ~\leq~&  \norm{x - \Pro_{\zer A} \lr{\J_{\gamma A}x}} \\
		~\leq~&  \norm{x - \J_{\gamma A}x} +\norm{\J_{\gamma A}x -  \Pro_{\zer A} \lr{\J_{\gamma A}x}} \\
		\stackrel{\cref{theorem:MetrSubregUpperBound:yx:dist}}{\leq}& \lr{1+\frac{\kappa}{\gamma}}  \norm{x - \J_{\gamma A}x}. 
		\end{align}
	\end{subequations}
	Applying \cref{lemma:resolvents:yxzx} with $\bar{x} = \Pro_{\zer A}x$ in the first inequality below, we observe that
	\begin{align*}
	\norm{y_{x} - \Pro_{\zer A}x }^{2} ~\leq~&  \norm{x -  \Pro_{\zer A}x }^{2} -\lambda \lr{2 -\lambda} \norm{x -\J_{\gamma A}x}^{2}\\
	\stackrel{\cref{theorem:MetrSubregUpperBound:yx:kappagamma}}{\leq} &  \lr{ 1 - \lambda \lr{2- \lambda} \frac{1}{ \lr{1+ \frac{\kappa}{\gamma}}^{2} }} \norm{x -  \Pro_{\zer A}x }^{2}, 
	\end{align*}
	which guarantees the desired inequality in \cref{theorem:MetrSubregUpperBound:yx}.
	
	\cref{theorem:MetrSubregUpperBound:zx}: 
	Based on \cref{cor:fact:FixJcAzerA}, $\J_{\gamma A}$ is $\frac{1}{2}$-averaged and $	\Fix \J_{\gamma A} = \zer A \neq \varnothing$. Employing  \cref{theorem:MetrSubregUpperBound:yx}  and applying \cref{lemma:yxzx} with $T= \J_{\gamma A}$, $\alpha=\frac{1}{2}$, $\beta =\rho$, and $\bar{x} = \Pro_{\zer A}x$, we deduce \cref{theorem:MetrSubregUpperBound:zx}.
\end{proof}

\begin{remark} \label{remark:MetrSubregUpperBound}
	We uphold the assumptions of \cref{theorem:MetrSubregUpperBound}. 	By some easy algebra, it is not difficult to get that 
	\begin{align*}
	 	\max \left\{  \lr{1-\frac{\lambda}{ \frac{\kappa}{\gamma}+1}}^{2}, \lr{ 1 - \lambda \lr{2-\lambda} \frac{1}{1+\frac{\kappa^{2} }{\gamma^{2} }} }  \right\}  
	<   1 - \lambda \lr{2- \lambda} \frac{1}{ \lr{1+ \frac{\kappa}{\gamma}}^{2} }.
	\end{align*}
	Note that  if $\zer A $ is a singleton, then $\Pro_{\zer A}x = \Pro_{\zer A} \lr{ \J_{\gamma A}x }$.
	Therefore, based on \cref{theorem:MetrSubregOptimalBoundszx}\cref{theorem:MetrSubregOptimalBoundszx:MetricSub} below, if $\zer A $ is a singleton, then the coefficient $\rho$ in  \cref{theorem:MetrSubregUpperBound}\cref{theorem:MetrSubregUpperBound:yx}  can be decreased.
\end{remark}	

\begin{definition} \label{definition:A-1Lipschitze}   {\rm \cite[Page~885]{Rockafellar1976}}
	Let $F: \mathcal{H} \to 2^{\mathcal{H}}$ be a set-valued operator. We say $F^{-1}$ is \emph{Lipschitz continuous at $0$ $\lr{\text{with modulus } \alpha \geq 0}$ } if there is a unique solution $\bar{z}$ to $0 \in F(z)$ $\lr{\text{i.e.\,}F^{-1}(0)=\{\bar{z}\}}$, and for some $\tau >0$ we have 
	\begin{align*}
	\norm{z -\bar{z}} \leq \alpha \norm{w} \quad \text{whenever } z \in F^{-1}(w) \text{ and } \norm{w} \leq \tau. 
	\end{align*}
	\index{Lipschitz continuous at $0$} \index{modulus}
\end{definition}

Let $A : \mathcal{H} \to 2^{\mathcal{H}}$ be a maximally monotone operator with $\zer A \neq \varnothing$.  It was claimed in 
\cite[Page~684]{Leventhal2009} and \cite[Page~5]{ShenPan2016} without proof   that   the assumption that $A^{-1}$  is Lipschitz continuous at $0$ is stronger than that $A$ is metrically subregular. For completeness, we provide a detailed proof for this claim below. 
\begin{fact} \label{prop:LipschitzMetricSubregularity}
	Let $A :\mathcal{H} \to 2^{\mathcal{H}}$  be maximally monotone with $\zer A \neq \varnothing$. Suppose that $A^{-1}$ is Lipschitz continuous at $0$ with modulus $\alpha > 0$, i.e., $A^{-1}(0) =\{\bar{x}\}$ and there exists $\tau >0$   such that 
	\begin{align} \label{eq:prop:LipschitzMetricSubregularity}
	\lr{ \forall (w,x) \in \gra A^{-1} \text{ with } w \in B[0;\tau]} \quad \norm{x -\bar{x}} \leq \alpha \norm{w}.
	\end{align}
	Set $\delta := \alpha \tau$. 	Then $A$ is metrically subregular at $\bar{x}$ for $0 \in A \bar{x}$ with $\subreg \lr{A;\bar{x}|\bar{y}} \leq \alpha$; more precisely,
	\begin{align} \label{eq:prop:LipschitzMetricSubregularity:MSub}
	\lr{\forall x \in B[\bar{x}; \delta]} \quad \dist \lr{x,  A^{-1}  0} \leq \alpha \dist \lr{0, Ax}.
	\end{align}
\end{fact}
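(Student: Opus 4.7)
The plan is to unpack both definitions and handle the two relevant regimes of $\|w\|$ separately, with the ball radius $\delta = \alpha\tau$ exactly tuned so that the two cases meet up. First I would observe that the singleton assumption $A^{-1}(0) = \{\bar{x}\}$ makes the left-hand side of \cref{eq:prop:LipschitzMetricSubregularity:MSub} trivial to compute, namely $\dist(x, A^{-1}0) = \norm{x - \bar{x}}$, so the target inequality reduces to $\norm{x - \bar{x}} \leq \alpha \dist(0, Ax)$.

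Next, fix an arbitrary $x \in B[\bar{x}; \delta]$. If $Ax = \varnothing$, then $\dist(0, Ax) = +\infty$ by convention and the inequality is vacuous, so I may assume $Ax \neq \varnothing$ and pick any $w \in Ax$, equivalently $(w,x) \in \gra A^{-1}$. The goal becomes $\norm{x - \bar{x}} \leq \alpha \norm{w}$, after which taking the infimum over $w \in Ax$ finishes the job and also yields the modulus bound $\subreg(A;\bar{x}|0) \leq \alpha$.

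The two cases are now straightforward. If $\norm{w} \leq \tau$, then \cref{eq:prop:LipschitzMetricSubregularity} applies directly and gives $\norm{x - \bar{x}} \leq \alpha \norm{w}$. If instead $\norm{w} > \tau$, I would not use the Lipschitz hypothesis at all; rather, I would invoke the chosen radius:
\begin{equation*}
\norm{x - \bar{x}} \leq \delta = \alpha \tau < \alpha \norm{w}.
\end{equation*}
Either way the bound holds, and passing to the infimum completes the proof.

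I do not expect any serious obstacle here; the entire content of the statement is packaged in the choice $\delta := \alpha\tau$, which is precisely what bridges the local Lipschitz information (only valid for $\norm{w} \leq \tau$) with the unbounded tail $\norm{w} > \tau$. The only mildly delicate point is the convention handling when $Ax = \varnothing$, which is why I flag it explicitly rather than silently assume $x \in \dom A$.
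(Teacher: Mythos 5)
Your proof is correct and follows essentially the same route as the paper's: the same reduction via $A^{-1}(0)=\{\bar{x}\}$, the same handling of $Ax=\varnothing$ through the convention $\inf\varnothing=\infty$, and the same two-case split on whether the relevant element of $Ax$ has norm at most $\tau$ (use the Lipschitz hypothesis) or exceeds $\tau$ (use $\norm{x-\bar{x}}\leq\delta=\alpha\tau$). The only difference is that the paper performs the case analysis on the specific point $\Pro_{Ax}0$, whose existence it first justifies by noting that $Ax$ is closed and convex, whereas you argue for an arbitrary $w\in Ax$ and pass to the infimum at the end, which slightly streamlines the argument by making the projection unnecessary.
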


\begin{proof}
	Let $x \in B[\bar{x}; \delta]$. Because $A^{-1}  0 =\{\bar{x}\}$, we know that 
	\begin{align} \label{eq:prop:LipschitzMetricSubregularity:norm}
	\dist \lr{x,  A^{-1}  0} =\norm{x -\bar{x}}.
	\end{align}
	
	If $Ax=\varnothing$, then, via the convention $\inf \varnothing = \infty$, $	\dist \lr{x,  A^{-1}  0}  \leq \infty = \alpha  \dist \lr{0, Ax}$. 
	
	Assume that $Ax \neq \varnothing$.  Then, invoking  the maximal monotonicity of $A$ and employing   \cite[Proposition~23.39]{BC2017}, we know that $Ax$ is nonempty closed and convex. So, via \cite[Theorem~3.16]{BC2017},  $y:=\Pro_{Ax} 0$ is a well-defined point in $Ax$. Then we have exactly the following two cases.
	
	\emph{Case~1:} $y \in B[0;\tau]$. Now $(y,x) \in \gra A^{-1}$ with $y \in B[0;\tau]$.  Bearing  \cref{eq:prop:LipschitzMetricSubregularity} and \cref{eq:prop:LipschitzMetricSubregularity:norm} in mind, we observe that 
	\begin{align*}
	\dist \lr{x,  A^{-1}  0} =	\norm{x -\bar{x}} \leq \alpha \norm{y} = \alpha \norm{\Pro_{Ax} 0} = \alpha \dist \lr{0, Ax}.
	\end{align*} 
	
	\emph{Case~2:} $y \notin B[0;\tau] $. Then $\norm{y}=\norm{\Pro_{Ax} 0}>\tau$. Hence, it is easy to see that
	\begin{align*}
	\dist \lr{x,  A^{-1}  0} =	\norm{x -\bar{x}} \leq \delta =\alpha \tau \leq \alpha  \norm{\Pro_{Ax}  0} = \alpha \dist \lr{0, Ax}.
	\end{align*}
	
	Altogether, \cref{eq:prop:LipschitzMetricSubregularity:MSub} is true in both cases and the proof is complete. 
\end{proof}

 Naturally, one may have the following question. 
 \begin{question}\label{question:MetriSub}
 	Let $A :\mathcal{H} \to 2^{\mathcal{H}}$  be maximally monotone. Suppose that  $\zer A =\{\bar{x}\}$. Are the following two statements equivalent?
 	\begin{enumerate}
 		\item \label{question:MetriSub:MetSub} $A$ is metrically subregular at $\bar{x}$ for $0 \in A \bar{x}$.
 		\item \label{question:MetriSub:Lip} $A^{-1}$ is Lipschitz continuous at $0$ with a positive modulus.
 	\end{enumerate} 
 	Based on \cref{prop:LipschitzMetricSubregularity}, we know that \cref{question:MetriSub:Lip} $\Rightarrow$ \cref{question:MetriSub:MetSub}.
 	Therefore, \cref{question:MetriSub:MetSub} $\Leftrightarrow$ \cref{question:MetriSub:Lip}  if and only if  \cref{question:MetriSub:MetSub} $\Rightarrow$ \cref{question:MetriSub:Lip}.
 \end{question}
 
 \cref{example:fmetricsub} illustrates that for all continuous and monotone 
 function $f : \mathbb{R} \to \mathbb{R}$ with $f^{-1}(0)=\{\bar{x}\}$,    the metrical subregularity of $f$ is equivalent to the Lipschitz continuity of $f^{-1}$ at $0$ with a positive modulus. In other words, we provide a specific example  showing the equivalence of the two statements in  \cref{question:MetriSub}.
 \begin{example} \label{example:fmetricsub}
 	Let $f : \mathbb{R} \to \mathbb{R}$ be continuous and monotone and let $f^{-1}(0)=\{\bar{x}\}$. Then the following hold. 
 	\begin{enumerate}
 		\item \label{example:fmetricsub:maximallymonotone} $f$ is maximally monotone. 
 		\item \label{example:fmetricsub:imply} Let $\epsilon \in \mathbb{R}_{++}$. Then there exists $\delta \in \mathbb{R}_{++}$ such that 
 		\begin{align} \label{eq:example:fmetricsub:imply}
 		f(x) \in B[0;\delta]  \Rightarrow 	x \in B[\bar{x}; \epsilon].
 		\end{align}
 		\item \label{example:fmetricsub:Lipschitz} $f$ is metrically subregular at $\bar{x}$ for $0 =  f (\bar{x})$ if and only if $f^{-1}$ is Lipschitz continuous at $0$ with a positive modulus.
 	\end{enumerate}
 \end{example}
 
 \begin{proof}
 	\cref{example:fmetricsub:maximallymonotone}: Because $f$ is  continuous and monotone, via \cite[Corollary~20.28]{BC2017}, $f$ is maximally monotone.

 	\cref{example:fmetricsub:imply}: Suppose to the contrary that \cref{eq:example:fmetricsub:imply} is not true. Then 
 	\begin{align*} 
 	\lr{\forall k \in \mathbb{N} \smallsetminus \{0\}} \text{ there exists } x_{k} \in \mathbb{R} \text{ such that }
  f(x_{k}) \in B\left[0;\frac{1}{k}\right] \text{ and } x_{k} \notin B\left[\bar{x}; \epsilon\right],
 	\end{align*}
 	which forces that
 	\begin{align} \label{eq:example:fmetricsub:imply:fxk}
 	f(x_{k}) \to 0 \quad \text{and} \quad \Omega \lr{ \lr{x_{k}}_{k \in \mathbb{N}} } \cap  B\left[\bar{x}; \frac{\epsilon}{2}\right] =\varnothing,
 	\end{align}
 	where $\Omega \lr{ \lr{x_{k}}_{k \in \mathbb{N}} }$ is the set of all sequential cluster points of $\lr{x_{k}}_{k \in \mathbb{N}}$.
 	
 	We have exactly the following cases. 
 	
 	\emph{Case~1}: $(x_{k})_{k \in \mathbb{N}}$ is bounded. Then this together with \cref{eq:example:fmetricsub:imply:fxk} implies that there exists a subsequence $(x_{k_{i}})_{i \in \mathbb{N}}$ of $(x_{k})_{k \in \mathbb{N}}$ such that 
 	\begin{align*}
 	f(x_{k_{i}}) \to 0 \quad \text{and} \quad x_{k_{i}} \to \hat{x} \notin  B\left[\bar{x}; \frac{\epsilon}{2}\right]. 
 	\end{align*}
 	On the other hand, the continuity of $f$ implies that 
 	\begin{align*}
 	f(\hat{x}) = f\lr{\lim_{i \to \infty} x_{k_{i}} }  = \lim_{i \to \infty} f(x_{k_{i}})  =0,
 	\end{align*}
 	which contradicts  that $f^{-1}(0)=\{\bar{x}\}$ and $\hat{x} \neq \bar{x}$.
 	
 	\emph{Case~2}: $(x_{k})_{k \in \mathbb{N}}$ is not bounded. Without loss of generality, we assume that there exists a subsequence $(x_{k_{i}})_{i \in \mathbb{N}}$ of $(x_{k})_{k \in \mathbb{N}}$ such that 
 	\begin{align} \label{eq:example:fmetricsub:imply:xki}
 	x_{k_{i}} \to \infty.
 	\end{align}
 	Let $\tilde{x} > \bar{x}$. 	 (If $x_{k_{i}} \to -\infty$, then we choose $\tilde{x} < \bar{x}$ and the remaining proof is similar to  the following proof.)  Inasmuch as  $f^{-1}(0)=\{\bar{x}\}$, we have exactly the following two subcases.
 	
 	\emph{Subcase~2.1}: $f(\tilde{x}) >0$.   Combine this with \cref{eq:example:fmetricsub:imply:fxk} and  \cref{eq:example:fmetricsub:imply:xki} to deduce that  there exists $N \in \mathbb{N}$ such that 
 	\begin{align*}
 	\lr{\forall i \geq N} \quad 	 f(x_{k_{i}}) <  f(\tilde{x}) \quad \text{and} \quad  x_{k_{i}} > \tilde{x},
 	\end{align*}
 	which entails that
 	\begin{align*}
 	\innp{ \tilde{x}-\bar{x},  f(\tilde{x}) -f(\bar{x})} >0 \quad \text{and} \quad \innp{ x_{k_{N}} -\tilde{x}, f\lr{x_{k_{N}}} - f\lr{\tilde{x}}} <0.
 	\end{align*}
 	This contradicts the monotonicity of $f$.

 	\emph{Subcase~2.2}: $f(\tilde{x}) <0$. Similarly, as a consequence of \cref{eq:example:fmetricsub:imply:fxk}  and \cref{eq:example:fmetricsub:imply:xki},  
 	there exists $N \in \mathbb{N}$ such that 
 	\begin{align*}
 	\lr{\forall i \geq N} \quad 	 f(x_{k_{i}}) >  f(\tilde{x}) \quad \text{and} \quad  x_{k_{i}} > \tilde{x}.
 	\end{align*}
 	This necessitates that
 	\begin{align*}
 	\innp{ \tilde{x}-\bar{x},  f(\tilde{x}) -f(\bar{x})} < 0 \quad \text{and} \quad \innp{ x_{k_{N}} -\tilde{x}, f\lr{x_{k_{N}}} - f\lr{\tilde{x}}} >0,
 	\end{align*}
 	which contradicts the monotonicity of $f$ as well.
 	
 	Altogether, \cref{example:fmetricsub:imply} holds in all cases.
 	
 	\cref{example:fmetricsub:Lipschitz}: Suppose that $f$ is metrically subregular at $\bar{x}$ for $0 =  f (\bar{x})$. Then
 	\begin{align}  \label{eq:example:fmetricsub:Lipschitz}
(\exists \kappa >0) (\exists \delta >0)	(\forall x \in B[\bar{x};\delta]) \quad 	\dist \lr{ x, f^{-1} (0) } \leq \kappa \dist \lr{0, f(x)},  \text{ that is, }
   \abs{x -\bar{x}} \leq \kappa \abs{f(x)}.
 	\end{align}
 	As a result of  \cref{prop:LipschitzMetricSubregularity}, it remains to prove that $f^{-1}$ is Lipschitz continuous at $0$ with a positive modulus.
 	
 Replace $\epsilon =\delta$ in \cref{example:fmetricsub:imply} above to see that there exists $\delta' \in \mathbb{R}_{++}$ such that 
 	\begin{align} \label{eq:example:fmetricsub:Lipschitz:imply}
 	f(x) \in B[0;\delta']  \Rightarrow 	x \in B[\bar{x}; \delta].
 	\end{align}
 	Now, invoke \cref{eq:example:fmetricsub:Lipschitz} and \cref{eq:example:fmetricsub:Lipschitz:imply} to obtain that
 	\begin{align*}
 	\lr{ \forall (f(x),x) \in \gra f^{-1} \text{ with } f(x) \in B[0; \delta']} \quad \abs{x -\bar{x}} \leq \kappa \abs{f(x)},
 	\end{align*}
 	which, via \cref{definition:A-1Lipschitze}, ensures that  $f^{-1}$ is Lipschitz continuous at $0$ with  modulus $\kappa >0$.
 	
 	Altogether, the proof is complete. 
 \end{proof}

 \cref{theorem:MetrSubregOptimalBounds}  together with  \cref{theorem:MetrSubregUpperBound} will play a critical role to prove the linear convergence of generalized proximal point algorithms later. 
 \begin{theorem} \label{theorem:MetrSubregOptimalBounds}
 	Let  $A: \mathcal{H} \to 2^{\mathcal{H}}$ be   maximally monotone with $\zer A \neq \varnothing$, let $x \in \mathcal{H}$,  let $\gamma \in \mathbb{R}_{++}$, and 	let $\lambda \in \left[0,2\right]$. Define
 	\begin{align}\label{eq:theorem:MetrSubregOptimalBounds:yx}
 	y_{x} := \lr{1-\lambda} x +\lambda \J_{\gamma A}x.
 	\end{align}  
 	Let $z \in \zer A$ and let $t \in \mathbb{R}_{++}$.
 	Suppose  that
 	\begin{align} \label{eq:theorem:MetrSubregOptimalBounds:leq1} 
 	\norm{ \J_{\gamma A}x -z } \leq t \norm{x -  \J_{\gamma A}x}.
 	\end{align} 
 	Then the following statements  hold. 
 	\begin{enumerate}
 		
 		\item  \label{theorem:MetrSubregOptimalBounds:leq1}  Suppose that  $\lambda \in \left[0,1\right]$. Then 
 		\begin{subequations}
 			\begin{align*}
 		 \norm{y_{x} -z}^{2}  
 			\leq  
 			\begin{cases}
 			\lr{1-\frac{\lambda}{t+1}}^{2} \norm{x -z}^{2} \quad &\text{if } t^{2} +\lambda -1 \leq 0;\\
 			\lr{ 1 - \lambda \lr{2-\lambda} \frac{1}{1+t^{2}} } \norm{x-z}^{2} \quad &\text{if } t^{2} +\lambda -1 > 0.
 			\end{cases}
 			\end{align*}
 		\end{subequations}
 		\item \label{theorem:MetrSubregOptimalBounds:geq1}   Suppose that $\lambda \in \left[1,2\right]$.  Then
 		\begin{align*}
 		\norm{y_{x} -z}^{2} \leq \lr{ 1 - \lambda \lr{2-\lambda} \frac{1}{1+t^{2}} } \norm{x-z}^{2}.
 		\end{align*}

 		\item \label{theorem:MetrSubregOptimalBounds:general}  	
 		Define
 		\begin{align}  \label{eq:theorem:MetrSubregOptimalBounds:general:rho:a}
 		\rho:=  	\begin{cases}
 		\lr{1-\frac{\lambda}{t+1}}^{2}   \quad &\text{if } 0\leq  \lambda \leq 1-t^{2};\\
 	 1 - \lambda \lr{2-\lambda} \frac{1}{1+t^{2}}   \quad &\text{if } 1-t^{2} < \lambda \leq 2.
 		\end{cases} 
 		\end{align}
 		Suppose that  $\lambda \in \left] 0,2\right[\,$.
 		Then 
 		\begin{align}  \label{eq:theorem:MetrSubregOptimalBounds:general:rho:b}
 		\rho =   \max \left\{ \lr{1-\frac{\lambda}{t+1}}^{2}, 1 - \lambda \lr{2-\lambda} \frac{1}{1+t^{2}}    \right \} \in  \left]0,1\right[\,.
 		\end{align} 
 		Moreover,
 		\begin{align*}   
 		\norm{y_{x} -z}^{2} \leq \rho  \norm{x-z}^{2}.
 		\end{align*}		
 	\end{enumerate}
 \end{theorem}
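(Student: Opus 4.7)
The plan is to substitute $u := x - z$ and $v := \J_{\gamma A}x - z$ so that $x - \J_{\gamma A}x = u - v$, $y_{x} - z = (1-\lambda)u + \lambda v$, and the hypothesis \cref{eq:theorem:MetrSubregOptimalBounds:leq1} reads $\norm{v} \leq t \norm{u - v}$. This places us inside the setting of \cref{lemma:uvINEQ}, and the subsequent estimates rely only on firm nonexpansiveness of $\J_{\gamma A}$ (\cref{lemma:JGammaAFix}), the hypothesis on $t$, and elementary algebra.

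For part (i) in the regime $t^{2} + \lambda - 1 \leq 0$ (which forces $t < 1$ and $\lambda \leq 1 - t^{2}$), the conclusion is immediate from \cref{lemma:uvINEQ}\cref{lemma:uvINEQ:RestrictLambda}. For part (i) when $t^{2} + \lambda - 1 > 0$ and for part (ii), I would proceed by direct computation. Expand $\norm{(1-\lambda)u + \lambda v}^{2} = \norm{u}^{2} - 2\lambda \innp{u, u-v} + \lambda^{2} \norm{u-v}^{2}$, split $\innp{u, u-v} = \norm{u-v}^{2} + \innp{v, u-v}$, and use \cref{lemma:JGammaAFix} to conclude $\innp{v, u-v} = \innp{\J_{\gamma A}x - z, x - \J_{\gamma A}x} \geq 0$. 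This produces the identity $\norm{y_{x} - z}^{2} = \norm{u}^{2} - \lambda(2-\lambda)\norm{u-v}^{2} - 2\lambda \innp{v, u-v}$. Next, the expansion $\norm{u}^{2} = \norm{u-v}^{2} + 2\innp{u-v, v} + \norm{v}^{2}$ combined with $\norm{v}^{2} \leq t^{2}\norm{u-v}^{2}$ yields the lower bound $\norm{u-v}^{2} \geq (\norm{u}^{2} - 2\innp{v, u-v})/(1+t^{2})$. Substituting this into the identity and regrouping gives
\[ \norm{y_{x} - z}^{2} \leq \lr{1 - \frac{\lambda(2-\lambda)}{1+t^{2}}}\norm{u}^{2} - \frac{2\lambda(\lambda + t^{2} - 1)}{1 + t^{2}}\innp{v, u-v}, \]
and the final term is non-positive precisely because $\lambda + t^{2} - 1 \geq 0$ in both regimes considered.

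For part (iii), the piecewise definition \cref{eq:theorem:MetrSubregOptimalBounds:general:rho:a} matches the maximum in \cref{eq:theorem:MetrSubregOptimalBounds:general:rho:b} by \cref{fact:lambdat2}, which identifies the sign of $(1 - \lambda/(t+1))^{2} - (1 - \lambda(2-\lambda)/(1+t^{2}))$ with that of $1 - \lambda - t^{2}$. To verify $\rho \in \left]0,1\right[\,$, one checks that for $\lambda \in \left]0,2\right[$ we have $\lambda(2-\lambda) \in \left]0,1\right]$ and $1 + t^{2} > 1$, while on the complementary branch $\lambda \leq 1 - t^{2}$ forces $0 < \lambda/(t+1) < 1$. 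Finally, $\norm{y_{x} - z}^{2} \leq \rho \norm{u}^{2}$ follows by selecting the appropriate bound from parts (i) and (ii) according to whether $\lambda \leq 1 - t^{2}$ or $\lambda > 1 - t^{2}$.

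The main obstacle is the joint case of part (i) with $t^{2}+\lambda > 1$ and part (ii): a naive triangle-inequality estimate $\norm{u} \leq (1+t)\norm{u-v}$ produces only $1 - \lambda(2-\lambda)/(1+t)^{2}$, which misses the desired $1 + t^{2}$ denominator. The trick is to keep the cross term $\innp{v, u-v}$ throughout instead of eliminating it via Cauchy--Schwarz, and to exploit both $\innp{v, u-v} \geq 0$ (firm nonexpansiveness) and $\lambda + t^{2} - 1 \geq 0$ (from the case hypothesis) so that the residual correction has the correct sign.
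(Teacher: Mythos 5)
Your proposal is correct, and for the core estimates it takes a genuinely different route from the paper. The paper proves \cref{theorem:MetrSubregOptimalBounds:leq1} and \cref{theorem:MetrSubregOptimalBounds:geq1} by two separate expansions: for $\lambda \in \left[0,1\right]$ it uses the convex-combination identity $\norm{(1-\lambda)u+\lambda v}^{2}=(1-\lambda)\norm{u}^{2}+\lambda\norm{v}^{2}-\lambda(1-\lambda)\norm{u-v}^{2}$ together with $\norm{u-v}^{2}\geq \frac{1}{t^{2}}\norm{v}^{2}$ and the sign of $\lambda-\lambda(1-\lambda)\frac{1}{t^{2}}$ (which is where the threshold $t^{2}+\lambda-1$ enters), while for $\lambda \in \left[1,2\right]$ it expands differently, discards $2\lambda(1-\lambda)\innp{v,u-v}\leq 0$ using monotonicity, and then applies $\norm{v}^{2}\leq \frac{t^{2}}{1+t^{2}}\norm{u}^{2}$; the first argument genuinely needs $\lambda(1-\lambda)\geq 0$ and the second needs $\lambda(1-\lambda)\leq 0$, so the split at $\lambda=1$ is essential there. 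You instead establish the single identity $\norm{y_{x}-z}^{2}=\norm{u}^{2}-\lambda(2-\lambda)\norm{u-v}^{2}-2\lambda\innp{v,u-v}$ and the single lower bound $\norm{u-v}^{2}\geq \frac{1}{1+t^{2}}\lr{\norm{u}^{2}-2\innp{v,u-v}}$, and the case hypothesis enters only through the sign of the residual coefficient $\lambda+t^{2}-1$; since this is nonnegative both when $t^{2}+\lambda-1>0$ and when $\lambda\geq 1$, one computation covers the second branch of \cref{theorem:MetrSubregOptimalBounds:leq1} and all of \cref{theorem:MetrSubregOptimalBounds:geq1} at once, needing only $\lambda(2-\lambda)\geq 0$ and $\innp{v,u-v}\geq 0$ (your appeal to \cref{lemma:JGammaAFix} is legitimate here, as that inequality is equivalent to $\innp{v,u-v}\geq 0$; the paper gets the same from \cref{corollary:JcA} and monotonicity). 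The remaining pieces --- the branch $t^{2}+\lambda-1\leq 0$ via \cref{lemma:uvINEQ}\cref{lemma:uvINEQ:RestrictLambda}, and \cref{theorem:MetrSubregOptimalBounds:general} via \cref{fact:lambdat2} --- coincide with the paper's treatment. The net effect is a more unified and slightly shorter argument that makes transparent why $1+t^{2}$, rather than $(1+t)^{2}$, appears in the denominator.
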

 
 \begin{proof}
 	Invoke \cref{lemma:JGammaAFix} in the following second inequality to entail that 
 	\begin{align*}
 	\lr{1+ \frac{1}{t^{2}}} \norm{\J_{\gamma A} x -z}^{2}  \stackrel{\cref{eq:theorem:MetrSubregOptimalBounds:leq1}}{\leq}\norm{\J_{\gamma A} x -z}^{2} +\norm{ x- \J_{\gamma A}x}^{2} \leq \norm{x -z}^{2},
 	\end{align*}
 	which necessitates that 
 	\begin{align}\label{eq:theorem:MetrSubregOptimalBounds:leq1:t2}
 	\norm{\J_{\gamma A} x -z}^{2}  \leq \frac{1 }{  1+ \frac{1}{t^{2}} }  \norm{x -z}^{2}.
 	\end{align}

 	\cref{theorem:MetrSubregOptimalBounds:leq1}:  If $\lambda =0$, then $\norm{y_{x} -z}^{2}  =\norm{x -z}^{2} =\lr{1-\frac{\lambda}{t+1}}^{2} \norm{x -z}^{2}  = \lr{ 1 - \lambda \lr{2-\lambda} \frac{1}{1+t^{2}} } \norm{x-z}^{2}$. Hence, the result in \cref{theorem:MetrSubregOptimalBounds:leq1} is trivial. 
 	
Suppose that  $\lambda \in \left]0,1\right]$. Then
 	\begin{align} \label{eq:theorem:MetrSubregOptimalBounds:leq1:EQ}
 	\lr{\lambda  -\lambda \lr{1 - \lambda} \frac{1}{t^{2}}} \leq 0 \Leftrightarrow t^{2} +\lambda -1 \leq 0.
 	\end{align}
 	It is clear that
 	\begin{subequations}  \label{eq:theorem:MetrSubregOptimalBounds:leq1:lambda}
 		\begin{align}
 		   \lr{1-\lambda}   + \lr{\lambda  -\lambda \lr{1 - \lambda} \frac{1}{t^{2}}} \frac{1 }{  1+ \frac{1}{t^{2}} }  
 	&	=  1+\lambda \lr{\frac{1 }{  1+ \frac{1}{t^{2}} } -1 } -\lambda \lr{1-\lambda}  \frac{1}{t^{2}}  \frac{1 }{  1+ \frac{1}{t^{2}} } \\
 	&	= 1 - \lambda \frac{1}{1+t^{2}} -\lambda \lr{1-\lambda} \frac{1}{1+t^{2}}\\ 
 	&	=  1 - \lambda \lr{2-\lambda} \frac{1}{1+t^{2}}.
 		\end{align}
 	\end{subequations}

 	Employing  \cite[Corollary~2.15]{BC2017}   in the following second equality and invoking  both \cref{eq:theorem:MetrSubregOptimalBounds:leq1:t2} and \cref{eq:theorem:MetrSubregOptimalBounds:leq1:EQ} in the second inequality, we observe that 	
 	\begin{align*}
  \norm{y_{x} -z}^{2} \stackrel{\cref{eq:theorem:MetrSubregOptimalBounds:yx}}{=} & \norm{ \lr{1-\lambda} (x-z) +\lambda \lr{\J_{\gamma A}x -z}}^{2}\\ 
 	~=~& \lr{1-\lambda} \norm{x -z}^{2} +\lambda \norm{ \J_{\gamma A}x -z }^{2} -\lambda \lr{1 - \lambda} \norm{x - \J_{\gamma A}x}^{2}\\
 	\stackrel{\cref{eq:theorem:MetrSubregOptimalBounds:leq1}}{\leq} & \lr{1-\lambda} \norm{x -z}^{2} +\lambda \norm{ \J_{\gamma A}x -z }^{2} -\lambda \lr{1 - \lambda} \frac{1}{t^{2}}  \norm{ \J_{\gamma A}x -z}^{2}\\
 	~=~&   \lr{1-\lambda} \norm{x -z}^{2} + \lr{\lambda  -\lambda \lr{1 - \lambda} \frac{1}{t^{2}}} \norm{ \J_{\gamma A}x -z}^{2}\\
 	~\leq~&
 	\begin{cases}
 	\lr{1-\lambda} \norm{x -z}^{2}  \quad &\text{if } t^{2} +\lambda -1 \leq 0;\\
 	\lr{ \lr{1-\lambda}   + \lr{\lambda  -\lambda \lr{1 - \lambda} \frac{1}{t^{2}}} \frac{1 }{  1+ \frac{1}{t^{2}} } } \norm{x -z}^{2}  \quad &\text{if } t^{2} +\lambda -1 > 0.
 	\end{cases}
 	\end{align*}
 	This combined with \cref{eq:theorem:MetrSubregOptimalBounds:leq1:lambda} guarantees that 
 	\begin{align}\label{eq:theorem:MetrSubregOptimalBounds:leq1:norm}
 	\norm{y_{x} -z}^{2}  \leq \begin{cases}
 	\lr{1-\lambda} \norm{x -z}^{2}  \quad &\text{if } t^{2} +\lambda -1 \leq 0;\\
 	\lr{ 1 - \lambda \lr{2-\lambda} \frac{1}{1+t^{2}} } \norm{x -z}^{2}  \quad &\text{if } t^{2} +\lambda -1 > 0.
 	\end{cases}
 	\end{align}
 	On the other hand,  if $t^{2} +\lambda -1 \leq 0$, 	then utilizing \cref{eq:theorem:MetrSubregOptimalBounds:leq1}  and applying \cref{lemma:uvINEQ}\cref{lemma:uvINEQ:RestrictLambda}  with $u= x-z $ and $v= \J_{\gamma A}x -z$, we know that 
 	\begin{align} \label{eq:theorem:MetrSubregOptimalBounds:leq1:leq}
 	\norm{y_{x} -z}^{2}
 	\leq  \lr{ 1-\frac{\lambda}{t+1} }^{2} \norm{x-z}^{2}.
 	\end{align}

Notice that, by some easy algebra,  $ \lr{ 1-\frac{\lambda}{t+1} }^{2} \leq 1- \lambda \Leftrightarrow t^{2} +\lambda -1 \leq 0$. Hence, combining  \cref{eq:theorem:MetrSubregOptimalBounds:leq1:norm} and \cref{eq:theorem:MetrSubregOptimalBounds:leq1:leq}, we deduce \cref{theorem:MetrSubregOptimalBounds:leq1}. 
 	
 	\cref{theorem:MetrSubregOptimalBounds:geq1}: Because $\lambda \in \left[1,2\right]$, we have that 
 	\begin{align} \label{eq:theorem:MetrSubregOptimalBounds:geq1:lambda<}
 	\lambda \lr{1-\lambda} \leq 0.
 	\end{align}
 	Due to \cref{corollary:JcA},  $	\lr{ \J_{\gamma A} x , \frac{1}{\gamma} \lr{x-\J_{\gamma A} x}}  \in \gra A$.   This combined with $\lr{z,0} \in \gra A$  and the monotonicity of $A$ entails that 
 	\begin{align} \label{eq:theorem:MetrSubregOptimalBounds:geq1:monotone}
 	\innp{ \J_{\gamma A} x -z, x-\J_{\gamma A} x} \geq 0.
 	\end{align}
 	Invoke  \cref{eq:theorem:MetrSubregOptimalBounds:geq1:lambda<} and \cref{eq:theorem:MetrSubregOptimalBounds:geq1:monotone} in the following first inequality to derive that 
 	\begin{align*}
 	 & \norm{y_{x} -z}^{2}  \\
 	\stackrel{\cref{eq:theorem:MetrSubregOptimalBounds:yx}}{=}& \norm{\lr{1-\lambda} \lr{x-z} +\lambda \lr{\J_{\gamma A}x-z}}^{2}\\
 	~=~& \lr{1-\lambda}^{2} \norm{x-z}^{2} +  \lambda^{2} \norm{\J_{\gamma A}x-z}^{2} +2\lambda \lr{1-\lambda}  \innp{x-z,\J_{\gamma A}x-z}\\
 	~=~&\lr{1-\lambda}^{2} \norm{x-z}^{2} +  \lambda^{2} \norm{\J_{\gamma A}x-z}^{2} +2\lambda \lr{1-\lambda} \norm{\J_{\gamma A}x-z}^{2} 
 	 +2\lambda \lr{1-\lambda}  \innp{x-\J_{\gamma A}x,\J_{\gamma A}x-z}\\
 	~\leq~&\lr{1-\lambda}^{2} \norm{x-z}^{2} +  \lambda \lr{2-\lambda} \norm{\J_{\gamma A}x-z}^{2} \\
 	\stackrel{\cref{eq:theorem:MetrSubregOptimalBounds:leq1:t2}}{\leq}&
 	\lr{1-\lambda}^{2} \norm{x-z}^{2} +  \lambda \lr{2-\lambda} \frac{1 }{  1+ \frac{1}{t^{2}} }  \norm{x -z}^{2} \\
 	~=~& \lr{1 - \lambda \lr{2-\lambda} \frac{1}{1+t^{2}}}\norm{x -z}^{2}, 
 	\end{align*}
 	where the last equality follows from 
 	\begin{align*}
 	\lr{1-\lambda}^{2} +\lambda \lr{2-\lambda}  \frac{1 }{  1+ \frac{1}{t^{2}} } =
 	1+\lambda \lr{2-\lambda} \lr{-1 + \frac{1 }{  1+ \frac{1}{t^{2}} }  }
 	 = 1 - \lambda \lr{2-\lambda} \frac{1}{1+t^{2}}.
 	\end{align*}

 	\cref{theorem:MetrSubregOptimalBounds:general}: 
 	Inasmuch as $\lambda \in \left[0,2\right]$ and $t \in \mathbb{R}_{++}$, it is easy to get that
 	\begin{align*}
 	&\lr{1-\frac{\lambda}{t+1}}^{2} \in \left[0,1\right[  \Leftrightarrow  \lambda < 2\lr{t+1}; \\
 	& 1 - \lambda \lr{2-\lambda} \frac{1}{1+t^{2}}   \in \left]0,1\right[  \Leftrightarrow   \lambda \lr{2-\lambda}  >0.
 	\end{align*}
 	Hence, $\lambda \in \left]0,2\right[$ and $t \in \mathbb{R}_{++}$ lead to 
 	\begin{align*}
 	\max \left\{ \lr{1-\frac{\lambda}{t+1}}^{2}, 1 - \lambda \lr{2-\lambda} \frac{1}{1+t^{2}}    \right\} \in  \left]0,1\right[\,.
 	\end{align*}
 	Combine this with  \cref{fact:lambdat2} and \cref{eq:theorem:MetrSubregOptimalBounds:general:rho:a} to yield   \cref{eq:theorem:MetrSubregOptimalBounds:general:rho:b}.
 	
 	Furthermore, the last assertion in \cref{theorem:MetrSubregOptimalBounds:general}   is clear from \cref{theorem:MetrSubregOptimalBounds:leq1} and \cref{theorem:MetrSubregOptimalBounds:geq1} above.
 \end{proof}
 
 The inequality \cref{eq:remark:theorem:MetrSubregOptimalBounds} presented in \cref{remark:theorem:MetrSubregOptimalBounds} will be used to compare convergence rates of generalized proximal point algorithms later. 
 \begin{remark} \label{remark:theorem:MetrSubregOptimalBounds}
 	Let $\lambda \in \mathbb{R}_{+}$ and let $t \in \mathbb{R}_{++}$ such that 
   $\lambda \leq 1-t^{2}$. Then
 	\begin{align*}
 	 1 - \lambda \frac{1}{1+ t^{2}} - \lr{1-\frac{\lambda}{ t+1}}^{2}  
 &	= 	 \frac{2\lambda}{ t+1}   - \lambda \frac{1}{1+ t^{2}}  - \frac{\lambda^{2}}{( t+1)^{2}} \\
 &	=  \frac{\lambda}{(1+ t^{2}) ( t+1)^{2}} \lr{2( t+1) (1+ t^{2})  - ( t+1)^{2} - \lambda (1+ t^{2}) }\\
 &	\geq   \frac{\lambda}{(1+ t^{2}) ( t+1)^{2}} \lr{2( t+1) (1+ t^{2})  - ( t+1)^{2} - (1-t^{2}) (1+ t^{2}) }\\
 &	=  \frac{\lambda}{(1+ t^{2}) ( t+1)^{2}} \lr{2t^{3} +t^{2} +t^{4} } =\frac{\lambda t^{2}}{1+t^{2}} \geq  0,
 	\end{align*}
 	where we use the assumption $\lambda \leq 1-t^{2}$ in the first inequality above. 
 	
 	Hence, based on \cref{eq:theorem:MetrSubregOptimalBounds:general:rho:b} in \cref{theorem:MetrSubregOptimalBounds}\cref{theorem:MetrSubregOptimalBounds:general}, we know that 
 	\begin{align} \label{eq:remark:theorem:MetrSubregOptimalBounds}
 	 \max \left\{ \lr{1-\frac{\lambda}{t+1}}^{2},   1 - \lambda \lr{2-\lambda} \frac{1}{1+t^{2}}     \right\} 
 	\leq   \max \left\{ 1 - \lambda   \frac{1}{1+t^{2}} ,   1 - \lambda \lr{2-\lambda} \frac{1}{1+t^{2} }   \right\}.
 	\end{align}
 \end{remark}
 
 \begin{theorem}  \label{theorem:MetrSubregOptimalBoundszx}
 Let  $A: \mathcal{H} \to 2^{\mathcal{H}}$ be   maximally monotone with $\zer A \neq \varnothing$. Let $x $ and $e$ be in $ \mathcal{H}$, let $\bar{x} \in \zer A$, let  $\eta $ and $\varepsilon$ be in $ \mathbb{R}_{+}$,  let  $ \gamma \in \mathbb{R}_{++}$, and   let $\lambda \in \left]0,2\right[\,$. Define
 	\begin{align}
 	y_{x} := \lr{1-\lambda} x +\lambda \J_{\gamma A}x \quad \text{and} \quad z_{x} := \lr{1-\lambda} x +\lambda \J_{\gamma A}x +\eta e.
 		\end{align}
 Then the following statements hold. 
 \begin{enumerate}
 	\item \label{theorem:MetrSubregOptimalBoundszx:MetricSub}  Suppose that $A$ is metrically subregular at $\bar{x}$ for $0 \in A\bar{x}$, i.e., 
 	\begin{align} \label{eq:theorem:MetrSubregOptimalBounds:MetricSub} 
 	(\exists \kappa >0) (\exists \delta >0) (\forall x \in B[\bar{x}; \delta]) \quad \dist \lr{x, A^{-1}0} \leq \kappa \dist \lr{0, Ax}.
 	\end{align}
 	Set $\rho  :=  \max \left\{ \lr{1-\frac{\lambda}{ \frac{\kappa}{\gamma}+1}}^{2}, \lr{ 1 - \lambda \lr{2-\lambda} \frac{1}{1+\frac{\kappa^{2} }{\gamma^{2} }} }   \right\}^{\frac{1}{2}}  $.  Suppose that  $\J_{\gamma A}x \in B[\bar{x}; \delta]$.	Then the following hold. 
 	\begin{enumerate}
 		\item \label{theorem:MetrSubregOptimalBoundszx:MetricSub:rho} $\rho  \in  \left]0,1\right[\,$.
 		\item \label{theorem:MetrSubregOptimalBoundszx:MetricSub:yx}   $\norm{y_{x} - \Pro_{\zer A} \lr{\J_{\gamma A}x} }  \leq \rho  \norm{x -  \Pro_{\zer A} \lr{\J_{\gamma A}x} } $.
 		
 		\item \label{theorem:MetrSubregOptimalBoundszx:MetricSub:barx}  If $\zer A =\{ \bar{x} \}$, then $\norm{y_{x} - \bar{x} } \leq \rho \norm{x - \bar{x} }$.
 		
 		\item \label{theorem:MetrSubregOptimalBoundszx:MetricSub:zx}   Suppose that $\norm{e} \leq \varepsilon \norm{x -z_{x}}$ and that $\eta \varepsilon \in \left[0,1 \right[\,$. Then
 		\begin{align*}
 		\norm{z_{x} - \Pro_{\zer A} \lr{ \J_{\gamma A}x  } }  \leq \frac{\rho  + \eta \varepsilon }{1 - \eta \varepsilon } \norm{x -  \Pro_{\zer A} \lr{\J_{\gamma A}x }  }.
 		\end{align*}
 		In addition, if $\zer A =\{ \bar{x} \}$, then 
 		\begin{align*}
 		\norm{z_{x} -  \bar{x}}  \leq \frac{\rho  + \eta \varepsilon }{1  - \eta \varepsilon } \norm{x -   \bar{x} }.
 		\end{align*}
 	\end{enumerate}

 	\item \label{theorem:MetrSubregOptimalBoundszx:Lipschitz} 
 	Suppose that $A^{-1}$ is Lipschitz continuous at $0$ with modulus $\alpha >0$, i.e., $A^{-1}(0) =\{\bar{x}\}$ and there exists $\tau >0$ such that 
 	\begin{align}  \label{eq:heorem:MetrSubregOptimalBounds:Lipschitz} 
 	\lr{ \forall (w,x) \in \gra A^{-1} \text{ with } w \in B[0;\tau]} \quad \norm{x -\bar{x}} \leq \alpha \norm{w}.
 	\end{align}
 	Set $\rho  :=  \max \left\{ \lr{1-\frac{\lambda}{ \frac{\alpha}{\gamma}+1}}^{2}, \lr{ 1 - \lambda \lr{2-\lambda} \frac{1}{1+\frac{\alpha^{2} }{\gamma^{2} }} }   \right\}^{\frac{1}{2}}  $. Suppose that  $\frac{1}{\gamma} \lr{x-\J_{\gamma A}x} \in B[0; \tau]$.  Then the following hold.
 	\begin{enumerate}
 		\item \label{theorem:MetrSubregOptimalBoundszx:Lipschitz:rho}  $\rho  \in  \left]0,1\right[\,$.
 		\item \label{theorem:MetrSubregOptimalBoundszx:Lipschitz:yx}   $\norm{y_{x} - \bar{x}}  \leq \rho   \norm{x -   \bar{x} }$.
 		
 		\item \label{theorem:MetrSubregOptimalBoundszx:Lipschitz:zx}  
 		Suppose that $\norm{e} \leq \varepsilon \norm{x -z_{x}}$ and that $\eta \varepsilon \in \left[0,1 \right[\,$.   Then
 		\begin{align*}
 		\norm{z_{x} -  \bar{x} }  \leq \frac{\rho  + \eta \varepsilon}{1  -\eta \varepsilon } \norm{x - \bar{x} }.
 		\end{align*}
 	\end{enumerate}  
 \end{enumerate}
\end{theorem}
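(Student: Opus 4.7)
The plan is to recognize that this theorem is essentially an assembly of three ingredients already established: the bound on $\|J_{\gamma A}x - z\|$ in terms of $\|x - J_{\gamma A}x\|$ coming from the regularity hypothesis, the sharp averaging inequality in \cref{theorem:MetrSubregOptimalBounds}\cref{theorem:MetrSubregOptimalBounds:general}, and the error-perturbation bound in \cref{lemma:yxzx}. The chief task is simply to identify, in each of the two regularity regimes, the right $t$ and the right base point $z$ with which to invoke these tools.

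For \cref{theorem:MetrSubregOptimalBoundszx:MetricSub}, the base point will be $z:=\Pro_{\zer A}(\J_{\gamma A}x)$. Arguing exactly as in the opening of the proof of \cref{theorem:MetrSubregUpperBound}, I apply the subregularity estimate \cref{eq:theorem:MetrSubregOptimalBounds:MetricSub} at the point $\J_{\gamma A}x\in B[\bar{x};\delta]$ and combine it with $\frac{1}{\gamma}(x-\J_{\gamma A}x)\in A(\J_{\gamma A}x)$ from \cref{corollary:JcA} to obtain
\begin{align*}
\norm{\J_{\gamma A}x - \Pro_{\zer A}(\J_{\gamma A}x)} = \dist(\J_{\gamma A}x,A^{-1}0) \leq \frac{\kappa}{\gamma}\,\norm{x-\J_{\gamma A}x}.
\end{align*}
This is exactly the hypothesis of \cref{theorem:MetrSubregOptimalBounds} with $t=\kappa/\gamma$ and $z=\Pro_{\zer A}(\J_{\gamma A}x)\in\zer A$. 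Applying \cref{theorem:MetrSubregOptimalBounds}\cref{theorem:MetrSubregOptimalBounds:general} then yields \cref{theorem:MetrSubregOptimalBoundszx:MetricSub:rho} (the location of $\rho$ in $\left]0,1\right[\,$) and \cref{theorem:MetrSubregOptimalBoundszx:MetricSub:yx}. Item \cref{theorem:MetrSubregOptimalBoundszx:MetricSub:barx} is immediate since $\zer A=\{\bar{x}\}$ forces $\Pro_{\zer A}(\J_{\gamma A}x)=\bar{x}$. For \cref{theorem:MetrSubregOptimalBoundszx:MetricSub:zx}, I invoke \cref{cor:fact:FixJcAzerA} to know that $\J_{\gamma A}$ is $\frac{1}{2}$-averaged with $\Fix\J_{\gamma A}=\zer A$ and then apply \cref{lemma:yxzx} with $T=\J_{\gamma A}$, $\alpha=\frac{1}{2}$, $\beta=\rho$, and the above choice of $\bar{x}$ there (which is a fixed point of $\J_{\gamma A}$); the singleton statement again follows by substitution.

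For \cref{theorem:MetrSubregOptimalBoundszx:Lipschitz}, the base point will simply be $\bar{x}$. From \cref{corollary:JcA} we have $\bigl(\tfrac{1}{\gamma}(x-\J_{\gamma A}x),\J_{\gamma A}x\bigr)\in\gra A^{-1}$, and the hypothesis $\tfrac{1}{\gamma}(x-\J_{\gamma A}x)\in B[0;\tau]$ puts us exactly in the scope of \cref{eq:heorem:MetrSubregOptimalBounds:Lipschitz}, giving
\begin{align*}
\norm{\J_{\gamma A}x - \bar{x}} \leq \frac{\alpha}{\gamma}\,\norm{x-\J_{\gamma A}x}.
\end{align*}
This is again the hypothesis of \cref{theorem:MetrSubregOptimalBounds}\cref{theorem:MetrSubregOptimalBounds:general}, now with $t=\alpha/\gamma$ and $z=\bar{x}$, yielding \cref{theorem:MetrSubregOptimalBoundszx:Lipschitz:rho} and \cref{theorem:MetrSubregOptimalBoundszx:Lipschitz:yx}. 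Then \cref{theorem:MetrSubregOptimalBoundszx:Lipschitz:zx} follows by applying \cref{lemma:yxzx} with $T=\J_{\gamma A}$, $\alpha=\frac{1}{2}$, $\beta=\rho$, and base point $\bar{x}\in\Fix\J_{\gamma A}$.

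I do not anticipate a real obstacle here, since the heavy lifting is done by \cref{theorem:MetrSubregOptimalBounds}, \cref{lemma:yxzx}, and \cref{corollary:JcA}. The only points that require attention are bookkeeping: verifying that $\rho$ as defined in the statement really coincides with the maximum expression delivered by \cref{theorem:MetrSubregOptimalBounds}\cref{theorem:MetrSubregOptimalBounds:general} (which is immediate from \cref{eq:theorem:MetrSubregOptimalBounds:general:rho:b} once one substitutes $t=\kappa/\gamma$ or $t=\alpha/\gamma$), and checking that the hypothesis of \cref{lemma:yxzx} ``$\norm{y_x-\bar{x}}\leq\beta\norm{x-\bar{x}}$'' is met with $\beta=\rho$, which is exactly what parts \cref{theorem:MetrSubregOptimalBoundszx:MetricSub:yx} (or \cref{theorem:MetrSubregOptimalBoundszx:MetricSub:barx}) and \cref{theorem:MetrSubregOptimalBoundszx:Lipschitz:yx} provide.
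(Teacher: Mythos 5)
Your proposal is correct and follows essentially the same route as the paper: the same choice of base points ($\Pro_{\zer A}(\J_{\gamma A}x)$ resp.\ $\bar{x}$), the same use of \cref{corollary:JcA} together with the regularity hypothesis to verify $\norm{\J_{\gamma A}x-z}\le t\norm{x-\J_{\gamma A}x}$ with $t=\kappa/\gamma$ or $t=\alpha/\gamma$, and the same appeals to \cref{theorem:MetrSubregOptimalBounds}\cref{theorem:MetrSubregOptimalBounds:general} and \cref{lemma:yxzx}. No gaps.
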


\begin{proof}
\cref{theorem:MetrSubregOptimalBoundszx:MetricSub}: Because $\J_{\gamma A}x \in B[\bar{x}; \delta]$,  adopt \cref{eq:theorem:MetrSubregOptimalBounds:MetricSub}  with $x = \J_{\gamma A}x$ in the first inequality to derive that 
\begin{align} \label{eq:theorem:MetrSubregOptimalBoundszx:MetricSub}
\norm{\J_{\gamma A}x -\Pro_{\zer A} \lr{ \J_{\gamma A}x} }  =\dist \lr{\J_{\gamma A}x, A^{-1}0}  
  \leq \kappa \dist \lr{0, A \lr{\J_{\gamma A}x}}  
  \leq \frac{\kappa}{\gamma} \norm{x -\J_{\gamma A}x},
\end{align}
where we utilize \cref{corollary:JcA} in the last inequality.
In view of  $\Pro_{\zer A} \lr{ \J_{\gamma A}x} \in \zer A$, employing \cref{eq:theorem:MetrSubregOptimalBoundszx:MetricSub} and applying \cref{theorem:MetrSubregOptimalBounds}\cref{theorem:MetrSubregOptimalBounds:general} with $t =\frac{\kappa}{\gamma}$ and $z= \Pro_{\zer A} \lr{ \J_{\gamma A}x} $, we establish \cref{theorem:MetrSubregOptimalBoundszx:MetricSub:rho} and  \cref{theorem:MetrSubregOptimalBoundszx:MetricSub:yx}.

Notice that if $\zer A =\{ \bar{x} \}$, then $\Pro_{\zer A} \lr{\J_{\gamma A}x} =\bar{x}$. Hence, \cref{theorem:MetrSubregOptimalBoundszx:MetricSub:barx} is clear from \cref{theorem:MetrSubregOptimalBoundszx:MetricSub:yx}.

In addition, according to \cref{cor:fact:FixJcAzerA}, we know that $\J_{\gamma A}$ is $\frac{1}{2}$-averaged and $\Fix  \J_{\gamma A} = \zer A   \neq \varnothing$. Hence, 
combine \cref{theorem:MetrSubregOptimalBoundszx:MetricSub:yx}$\&$\cref{theorem:MetrSubregOptimalBoundszx:MetricSub:barx} with 	\cref{lemma:yxzx}  to guarantee \cref{theorem:MetrSubregOptimalBoundszx:MetricSub:zx}.

\cref{theorem:MetrSubregOptimalBoundszx:Lipschitz}: Employing \cref{corollary:JcA} again, we get that $ \lr{ \J_{\gamma A}x, \frac{1}{\gamma} \lr{x-\J_{\gamma A}x} } \in \gra A$. Taking the assumption, $\frac{1}{\gamma} \lr{x-\J_{\gamma A}x} \in B[0; \tau]$,  and 
\cref{eq:heorem:MetrSubregOptimalBounds:Lipschitz}  into account, we establish that
\begin{align*}
\norm{\J_{\gamma A}x - \bar{x} } \leq \frac{\alpha}{\gamma} \norm{x -\J_{\gamma A}x},
\end{align*}
which, applying
\cref{theorem:MetrSubregOptimalBounds}\cref{theorem:MetrSubregOptimalBounds:general}   with $t =\frac{\alpha}{\gamma}$ and $z=\bar{x}$, ensures \cref{theorem:MetrSubregOptimalBoundszx:Lipschitz:rho} and \cref{theorem:MetrSubregOptimalBoundszx:Lipschitz:yx}. 	 

At last, similarly with the proof of \cref{theorem:MetrSubregOptimalBoundszx:MetricSub:zx} above,   \cref{theorem:MetrSubregOptimalBoundszx:Lipschitz:yx} and \cref{lemma:yxzx}  entail \cref{theorem:MetrSubregOptimalBoundszx:Lipschitz:zx} directly. 
\end{proof}

\begin{remark}  \label{remark:theorem:MetrSubregOptimalBoundszx}
\begin{enumerate}
	\item \label{remark:theorem:MetrSubregOptimalBoundszx:better}  Suppose $\zer A =\{ \bar{x} \}$. 
	If the assumption that $A^{-1}$ is Lipschitz continuous at $0$ with a positive modulus  is strictly stronger than that $A$ is metrically subregular at $\bar{x}$ for $0 \in A\bar{x}$, then \cref{theorem:MetrSubregOptimalBoundszx}\cref{theorem:MetrSubregOptimalBoundszx:MetricSub} is  more interesting than  \cref{theorem:MetrSubregOptimalBoundszx}\cref{theorem:MetrSubregOptimalBoundszx:Lipschitz}.
	\item The idea of \cref{theorem:MetrSubregOptimalBoundszx}\cref{theorem:MetrSubregOptimalBoundszx:Lipschitz:yx}   is essentially presented in  
	\cite[Theorem~3.1]{GuYang2019} on the linear convergence rate of the exact  version of the generalized proximal point algorithm with the relaxation coefficient being a constant in $\mathbb{R}^{n}$.
	 Notice that the proof of \cref{theorem:MetrSubregOptimalBoundszx}\cref{theorem:MetrSubregOptimalBoundszx:Lipschitz:yx}  is closely related to \cref{theorem:MetrSubregOptimalBounds}\cref{theorem:MetrSubregOptimalBounds:leq1}$\&$\cref{theorem:MetrSubregOptimalBounds:geq1}, that  
	\cref{lemma:uvINEQ} is critical to   the proof of \cref{theorem:MetrSubregOptimalBounds}\cref{theorem:MetrSubregOptimalBounds:leq1}, and that   \cref{lemma:uvINEQ}  is inspired by  \cite[Theorem~3.1]{GuYang2019} (see  \cref{remark:uvINEQ} for details). But the proof of \cref{theorem:MetrSubregOptimalBoundszx}\cref{theorem:MetrSubregOptimalBoundszx:Lipschitz:yx}     is more natural and easier to understand than that of \cite[Theorem~3.1]{GuYang2019}. In addition, actually we shall use \cref{theorem:MetrSubregOptimalBoundszx} to investigate the  linear convergence of the inexact version of generalized proximal point algorithms  later.

	\item It is not difficult to see that     \cref{theorem:MetrSubregOptimalBoundszx}\cref{theorem:MetrSubregOptimalBoundszx:Lipschitz:yx}   can actually also be obtained by employing \cref{prop:LipschitzMetricSubregularity} and applying \cref{theorem:MetrSubregOptimalBoundszx}\cref{theorem:MetrSubregOptimalBoundszx:MetricSub:barx} with an extra assumption on the distance from $x$ to $\zer A$.
\end{enumerate}
\end{remark}

\section{Inexact Version of the  Non-stationary Krasnosel'ski\v{\i}-Mann iterations} \label{sec:KMannIterations}

In the whole section,  we suppose that $(\alpha_{k})_{k\in \mathbb{N}}$ is in $\left]0,1\right]$, that  $(\forall k \in \mathbb{N})$ $\lambda_{k} \in \left[ 0,\frac{1}{\alpha_{k}}  \right]$,  and that 
\begin{align*}
(\forall k \in \mathbb{N}) \quad T_{k} : \mathcal{H} \to \mathcal{H} \text{ is } \alpha_{k}\text{-averaged with } \cap_{i \in \mathbb{N}}\Fix T_{i} \neq \varnothing.
\end{align*}
Let $x_{0} $ and $\lr{e_{k}}_{k \in \mathbb{N}}$ be in $ \mathcal{H}$ and let $\lr{\eta_{k}}_{k \in \mathbb{N}}$ be in $\mathbb{R}_{+}$.  In this section, we investigate the \emph{inexact non-stationary Krasnosel'ski\v{\i}-Mann iterations} generated by following the iteration scheme
\begin{align}     \label{eq:Tkxk}
\lr{\forall k \in \mathbb{N}} \quad x_{k+1} =(1-\lambda_{k})x_{k} +\lambda_{k} T_{k}x_{k} +\eta_{k}e_{k}.
\end{align}  

Note that the generalized proximal point algorithm studied in this work is actually a special case of the iteration sequence generated by \cref{eq:Tkxk}. Some results obtained in this section will be applied to generalized proximal point algorithms in the following section.

\begin{fact} \label{theorem:KMBasic} {\rm \cite[Theorem~4.3]{OuyangStabilityKMIterations2022}}
	The following statements hold. 
	\begin{enumerate}
		
		\item \label{theorem:KMBasic:induction} $(\forall \bar{x} \in \cap_{k\in \mathbb{N}}\Fix T_{k})$ $(\forall k \in \mathbb{N})$ $\norm{x_{k+1} -\bar{x}} \leq  \norm{x_{0} -\bar{x}} + \sum^{k}_{i =0} \eta_{i}\norm{e_{i}}$.
		
		\item  \label{theorem:KMBasic:sumek} Suppose that $\sum_{k \in \mathbb{N}}   \eta_{k} \norm{e_{k}} <\infty$. 
		Then the following hold. 
		\begin{enumerate}
			
			\item  \label{theorem:KMBasic:sum} $\sum_{k \in \mathbb{N}} \lambda_{k}  \lr{\frac{1}{\alpha_{k}} -\lambda_{k}} \norm{x_{k} -T_{k}x_{k}}^{2} < \infty$.
			
			\item  \label{theorem:KMBasic:l2} Suppose that $\liminf_{k \to \infty}  \lambda_{k}  \lr{\frac{1}{\alpha_{k}} -\lambda_{k}} >0$ $($e.g., $\liminf_{k \to \infty}  \lambda_{k} >0$ and $\limsup_{k \to \infty} \lambda_{k} < \frac{1}{\limsup_{k \to \infty} \alpha_{k}} <\infty$$)$. Then $\sum_{k \in \mathbb{N}}   \norm{x_{k} -T_{k}x_{k}}^{2} < \infty$. Consequently,  $\lim_{k \to \infty} \norm{x_{k} -T_{k}x_{k}}=0$.
			
		\end{enumerate}		
	\end{enumerate}
\end{fact}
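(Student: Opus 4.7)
The plan is to apply \cref{fact:lemma:yxzx} to each step of the iteration with $T = T_k$, $\alpha = \alpha_k$, $\lambda = \lambda_k$, $\eta = \eta_k$, $e = e_k$, $x = x_k$, which identifies $y_{x_k} = (1-\lambda_k)x_k + \lambda_k T_k x_k$ and $z_{x_k} = x_{k+1}$, and to take $\bar{x} \in \cap_{i \in \mathbb{N}} \Fix T_i$. A key observation used throughout is that the hypothesis $\lambda_k \in [0, 1/\alpha_k]$ guarantees $\lambda_k(1/\alpha_k - \lambda_k) \geq 0$, so the corresponding term in \cref{eq:fact:lemma:yxzx} always has the favorable sign.

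For \cref{theorem:KMBasic:induction}, the inequality \cref{eq:fact:lemma:yxzx:y} immediately yields the nonexpansiveness-type bound $\norm{y_{x_k} - \bar{x}} \leq \norm{x_k - \bar{x}}$, and then the triangle inequality applied to $x_{k+1} = y_{x_k} + \eta_k e_k$ gives
\begin{equation*}
\norm{x_{k+1} - \bar{x}} \leq \norm{x_k - \bar{x}} + \eta_k \norm{e_k}.
\end{equation*}
A straightforward induction on $k$ produces the claimed telescoping bound.

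For \cref{theorem:KMBasic:sum}, I would start from \cref{eq:fact:lemma:yxzx:z}, which reads
\begin{equation*}
\norm{x_{k+1} - \bar{x}}^{2} \leq \norm{x_k - \bar{x}}^{2} - \lambda_k\lr{\frac{1}{\alpha_k} - \lambda_k}\norm{x_k - T_k x_k}^{2} + \eta_k \norm{e_k}\lr{2\norm{y_{x_k} - \bar{x}} + \eta_k\norm{e_k}}.
\end{equation*}
By \cref{theorem:KMBasic:induction} together with $\sum_{i \in \mathbb{N}} \eta_i \norm{e_i} < \infty$, the sequence $(\norm{x_k - \bar{x}})_{k \in \mathbb{N}}$ is bounded, hence so is $(\norm{y_{x_k} - \bar{x}})_{k \in \mathbb{N}}$ by some $M \in \mathbb{R}_+$. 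Since $\eta_k \norm{e_k} \to 0$, the residual term $\eta_k\norm{e_k}(2M + \eta_k\norm{e_k})$ is summable. Rearranging and telescoping from $k = 0$ to $K$ produces
\begin{equation*}
\sum_{k=0}^{K} \lambda_k\lr{\frac{1}{\alpha_k} - \lambda_k}\norm{x_k - T_k x_k}^{2} \leq \norm{x_0 - \bar{x}}^{2} + \sum_{k=0}^{K} \eta_k\norm{e_k}\lr{2M + \eta_k\norm{e_k}},
\end{equation*}
and sending $K \to \infty$ delivers \cref{theorem:KMBasic:sum}. Finally, under $\liminf_{k\to\infty} \lambda_k(1/\alpha_k - \lambda_k) > 0$, pick $c > 0$ and $K \in \mathbb{N}$ with $\lambda_k(1/\alpha_k - \lambda_k) \geq c$ for all $k \geq K$; then \cref{theorem:KMBasic:sum} forces $c \sum_{k \geq K} \norm{x_k - T_k x_k}^{2} < \infty$, which yields \cref{theorem:KMBasic:l2} and in particular $\norm{x_k - T_k x_k} \to 0$.

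There is no substantive obstacle here: the argument is a standard Fejér-monotonicity-with-error computation, and the main thing to be careful about is the order of steps, namely that the boundedness of $(\norm{x_k - \bar{x}})$ used in \cref{theorem:KMBasic:sum} must be obtained first via \cref{theorem:KMBasic:induction}, so that the cross term $2\eta_k\norm{e_k}\,\norm{y_{x_k} - \bar{x}}$ in \cref{eq:fact:lemma:yxzx:z} can be dominated by a summable sequence.
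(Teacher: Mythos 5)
The paper does not prove this statement itself: it is imported verbatim as a Fact from \cite[Theorem~4.3]{OuyangStabilityKMIterations2022}, so there is no internal proof to compare against. Your argument is correct and is the standard quasi-Fej\'{e}r computation built on \cref{fact:lemma:yxzx} with $T=T_{k}$, $\alpha=\alpha_{k}$, $z_{x_{k}}=x_{k+1}$, which is exactly the toolkit the paper deploys for the analogous estimate in \cref{lemma:xkzCapFix}. One small wording point: in part \cref{theorem:KMBasic:sum} the summability of the perturbation term follows from $\sum_{k\in\mathbb{N}}\eta_{k}\norm{e_{k}}<\infty$ together with the boundedness of $2\norm{y_{x_{k}}-\bar{x}}+\eta_{k}\norm{e_{k}}$, not from $\eta_{k}\norm{e_{k}}\to 0$ as you phrase it, but your displayed telescoped inequality already invokes the correct mechanism.
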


\begin{lemma} \label{lemma:xkzCapFix}
	
	Denote by $C:=  \cap_{k\in \mathbb{N}}\Fix T_{k}$. 
 Define 
\begin{align*}
(\forall k \in \mathbb{N}) \quad y_{k} := (1-\lambda_{k})x_{k} +\lambda_{k} T_{k}x_{k}   \text{ and }   \varepsilon_{k} :=  \eta_{k} \norm{e_{k}} \lr{2\norm{y_{k} -\Pro_{C}x_{k}} +\eta_{k} \norm{e_{k}}}.
\end{align*} 
Then  
\begin{align*}
(\forall k \in \mathbb{N}) \quad  \dist^{2} \lr{x_{k+1},  C} \leq \dist^{2} \lr{x_{k},  C} -\lambda_{k}  \lr{\frac{1}{\alpha_{k}} -\lambda_{k}} \norm{x_{k} -T_{k}x_{k}}^{2} +\varepsilon_{k}.
\end{align*}
\end{lemma}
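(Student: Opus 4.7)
The plan is to reduce the claim to a direct application of \cref{fact:lemma:yxzx} with the particular choice of reference fixed point $\bar{x} := \Pro_{C} x_{k}$. Note that $x_{k+1}$ as defined by the iteration \cref{eq:Tkxk} coincides with the quantity $z_{x}$ of \cref{fact:lemma:yxzx} when we take $T = T_{k}$, $\alpha = \alpha_{k}$, $x = x_{k}$, $\lambda = \lambda_{k}$, $\eta = \eta_{k}$, and $e = e_{k}$; similarly $y_{k}$ matches $y_{x}$. So the strategy is: (i) verify that the reference point belongs to $\Fix T_{k}$; (ii) invoke the quantitative bound in \cref{eq:fact:lemma:yxzx:z}; (iii) convert the two resulting norms involving $\Pro_{C} x_{k}$ into squared distances to $C$.

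For step (i), since $C = \cap_{i \in \mathbb{N}} \Fix T_{i}$, the set $C$ is (by the assumptions on $T_{k}$) nonempty, and $C \subseteq \Fix T_{k}$. If needed one should point out that $C$ is closed and convex so that $\Pro_{C} x_{k}$ is a well-defined point in $C \subseteq \Fix T_{k}$; this follows because each $T_{k}$ is averaged hence nonexpansive, and fixed-point sets of nonexpansive operators are closed and convex, and arbitrary intersections of closed convex sets are closed and convex. Thus $\Pro_{C} x_{k}$ is an admissible choice of $\bar{x}$ in \cref{fact:lemma:yxzx}.

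For step (ii), applying \cref{eq:fact:lemma:yxzx:z} with these choices yields
\begin{align*}
\norm{x_{k+1} - \Pro_{C} x_{k}}^{2}
\leq \norm{x_{k} - \Pro_{C} x_{k}}^{2} - \lambda_{k}\lr{\frac{1}{\alpha_{k}} - \lambda_{k}} \norm{x_{k} - T_{k} x_{k}}^{2} + \varepsilon_{k},
\end{align*}
since the error term on the right of \cref{eq:fact:lemma:yxzx:z} is exactly $\varepsilon_{k}$ as defined in the statement. For step (iii), the definition of the distance function gives $\norm{x_{k} - \Pro_{C} x_{k}} = \dist(x_{k}, C)$, and $\Pro_{C} x_{k} \in C$ together with the infimum definition of distance gives $\dist(x_{k+1}, C) \leq \norm{x_{k+1} - \Pro_{C} x_{k}}$. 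Squaring the latter and substituting into the displayed inequality delivers the conclusion. There is no genuine obstacle here; the only thing to be careful about is ensuring that $\Pro_{C} x_{k}$ is indeed a fixed point of $T_{k}$ (so that \cref{fact:lemma:yxzx} is applicable), which as noted above is immediate from $C \subseteq \Fix T_{k}$.
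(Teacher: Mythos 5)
Your proposal is correct and follows essentially the same route as the paper's proof: both apply \cref{eq:fact:lemma:yxzx:z} of \cref{fact:lemma:yxzx} with $\bar{x} = \Pro_{C}x_{k}$ (justified because $C$ is nonempty, closed, and convex, and $C \subseteq \Fix T_{k}$), and then pass to distances via $\dist\lr{x_{k+1},C} \leq \norm{x_{k+1}-\Pro_{C}x_{k}}$ and $\norm{x_{k}-\Pro_{C}x_{k}} = \dist\lr{x_{k},C}$. No gaps.
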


\begin{proof}
Inasmuch as $(\forall k \in \mathbb{N})$ $T_{k}$ is nonexpansive, via	\cite[Proposition~4.23(ii)]{BC2017},    $C=  \cap_{k\in \mathbb{N}}\Fix T_{k}  $ is  closed and convex. So, by \cite[Theorem~3.16]{BC2017}, $(\forall  x \in \mathcal{H})$ $\Pro_{ C} x$ is a well-defined point in $C$.
	
	For every $k \in \mathbb{N}$, applying \cref{eq:fact:lemma:yxzx:z}  in  \cref{fact:lemma:yxzx}  with  $T=T_{k}$, $\alpha =\alpha_{k}$, $x=x_{k}$, $y_{x}=y_{k}$, $z_{x}=x_{k+1}$,  $\eta =\eta_{k}$, $e=e_{k}$, and $\bar{x} =\Pro_{C}x_{k}$ in the second inequality below, we derive that 
	\begin{align*}
 \dist^{2} \lr{x_{k+1},  C}
	~\leq~ &\norm{x_{k+1} - \Pro_{C}x_{k}}^{2}\\
	~\leq~ & \norm{x_{k} - \Pro_{C}x_{k}}^{2} -\lambda_{k}  \lr{\frac{1}{\alpha_{k}} -\lambda_{k}} \norm{x_{k} -T_{k}x_{k}}^{2} +\varepsilon_{k}\\
	~=~& \dist^{2} \lr{x_{k},  C} -\lambda_{k}  \lr{\frac{1}{\alpha_{k}} -\lambda_{k}} \norm{x_{k} -T_{k}x_{k}}^{2} +\varepsilon_{k}.
	\end{align*}

	Altogether, the proof is complete.
\end{proof}

\begin{lemma} \label{Lemma:Tkdistlinear}
	Let $\bar{x} \in \cap_{k\in \mathbb{N}}\Fix T_{k}$. Suppose that $(\forall k \in \mathbb{N})$ $\Id -T_{k}$ is metrically subregular at $\bar{x}$ for 
	$0 \in \lr{\Id -T_{k}}\bar{x}$, i.e., 
	\begin{align}    \label{eq:Lemma:Tkdistlinear:MetricSub}
	(\exists \gamma_{k} >0) (\exists \delta_{k} >0) (\forall x \in B[\bar{x}; \delta_{k}]) \quad \dist \lr{x, \Fix T_{k}} \leq \gamma_{k} \norm{x - T_{k}x}.
	\end{align}
	Suppose that $\delta:=\inf_{k \in \mathbb{N}} \delta_{k} >0$ and that $  \sum_{k \in \mathbb{N}} \eta_{k}\norm{e_{k}} < \delta$.
	Let $0 < \tau \leq \delta - \sum_{k \in \mathbb{N}} \eta_{k}\norm{e_{k}} $ and let $x_{0} \in B[\bar{x}; \tau]$.
	Then the following hold. 
	\begin{enumerate}
		\item \label{Lemma:Tkdistlinear:xk} $(\forall k \in \mathbb{N})$ $ x_{k} \in B[\bar{x}; \delta]$.
		
		\item \label{Lemma:Tkdistlinear:supkappa} Suppose that $(\forall k \in \mathbb{N})$ $C:=\Fix T_{k} \neq \varnothing$, that $\gamma:=\sup_{k \in \mathbb{N}} \gamma_{k} < \infty$, and that   $\liminf_{k \to \infty}  \lambda_{k}  \lr{\frac{1}{\alpha_{k}} -\lambda_{k}} >0$ $($e.g., $\liminf_{k \to \infty}  \lambda_{k} >0$ and $\limsup_{k \to \infty} \lambda_{k} < \frac{1}{\limsup_{k \to \infty} \alpha_{k}} < \infty$$)$. Then $\sum_{k \in \mathbb{N}} \dist^{2} \lr{x_{k}, C} < \infty$. 
		
		Consequently, $ \dist  \lr{x_{k}, C} \to 0$.
	\end{enumerate}
\end{lemma}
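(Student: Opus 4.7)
The plan is to get \cref{Lemma:Tkdistlinear:xk} immediately from the growth bound in \cref{theorem:KMBasic}\cref{theorem:KMBasic:induction}, and then to derive \cref{Lemma:Tkdistlinear:supkappa} by converting the uniform metric subregularity inequality \cref{eq:Lemma:Tkdistlinear:MetricSub} into a bound on $\dist(x_k, C)$ that I combine with the square-summability of $\norm{x_k - T_k x_k}$ furnished by \cref{theorem:KMBasic}\cref{theorem:KMBasic:sumek}\cref{theorem:KMBasic:l2}.

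For \cref{Lemma:Tkdistlinear:xk}, since $\bar{x} \in \cap_{k \in \mathbb{N}} \Fix T_k$, applying \cref{theorem:KMBasic}\cref{theorem:KMBasic:induction} gives, for every $k \in \mathbb{N}$,
\begin{align*}
\norm{x_{k+1} - \bar{x}} \leq \norm{x_0 - \bar{x}} + \sum_{i=0}^{k} \eta_i \norm{e_i} \leq \tau + \sum_{i \in \mathbb{N}} \eta_i \norm{e_i} \leq \delta,
\end{align*}
where the second inequality uses $x_0 \in B[\bar{x}; \tau]$ and nonnegativity of $\eta_i \norm{e_i}$, and the third uses $\tau \leq \delta - \sum_i \eta_i \norm{e_i}$. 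Combined with $\norm{x_0 - \bar{x}} \leq \tau \leq \delta$, this delivers $x_k \in B[\bar{x}; \delta]$ for every $k \in \mathbb{N}$.

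For \cref{Lemma:Tkdistlinear:supkappa}, the previous part together with $\delta \leq \delta_k$ places every $x_k$ inside $B[\bar{x}; \delta_k]$, so \cref{eq:Lemma:Tkdistlinear:MetricSub} and $\gamma_k \leq \gamma$ yield
\begin{align*}
\dist(x_k, C) = \dist(x_k, \Fix T_k) \leq \gamma_k \norm{x_k - T_k x_k} \leq \gamma \norm{x_k - T_k x_k}.
\end{align*}
Because $\sum_k \eta_k \norm{e_k} < \delta < \infty$ and $\liminf_k \lambda_k \lr{ \frac{1}{\alpha_k} -\lambda_k } > 0$, \cref{theorem:KMBasic}\cref{theorem:KMBasic:sumek}\cref{theorem:KMBasic:l2} provides $\sum_k \norm{x_k - T_k x_k}^2 < \infty$. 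Squaring and summing the previous bound then furnishes $\sum_k \dist^2(x_k, C) \leq \gamma^2 \sum_k \norm{x_k - T_k x_k}^2 < \infty$, which in particular forces $\dist(x_k, C) \to 0$.

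The argument is essentially bookkeeping and I do not anticipate any substantial obstacle. The conceptual role is played by the two uniformity hypotheses $\inf_k \delta_k > 0$ and $\sup_k \gamma_k < \infty$: they ensure that every iterate sits in a \emph{common} neighborhood of $\bar{x}$ on which a \emph{single} metric subregularity constant governs the inequality $\dist(x_k, C) \leq \gamma \norm{x_k - T_k x_k}$. Without those uniformities the conversion from $\norm{x_k - T_k x_k}^2$-summability to $\dist^2(x_k, C)$-summability could fail term by term, so verifying that these two constants do the work is the only delicate point.
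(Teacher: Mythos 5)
Your proof is correct and follows essentially the same route as the paper: part (i) via the growth bound of \cref{theorem:KMBasic}\cref{theorem:KMBasic:induction} together with $\tau + \sum_{k}\eta_{k}\norm{e_{k}} \leq \delta$, and part (ii) by combining the uniform subregularity bound $\dist(x_{k},C) \leq \gamma \norm{x_{k}-T_{k}x_{k}}$ with the square-summability from \cref{theorem:KMBasic}\cref{theorem:KMBasic:l2}. Your explicit remark that $\delta \leq \delta_{k}$ places each $x_{k}$ in $B[\bar{x};\delta_{k}]$ is a small detail the paper leaves implicit, but the argument is the same.
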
 

\begin{proof}
	
	\cref{Lemma:Tkdistlinear:xk}: Due to 	\cref{theorem:KMBasic}\cref{theorem:KMBasic:induction} and the assumptions that $x_{0} \in B[\bar{x};\tau]$ and $\tau + \sum_{k \in \mathbb{N}} \eta_{k}\norm{e_{k}} \leq  \delta  $, we easily get \cref{Lemma:Tkdistlinear:xk}.
	
	\cref{Lemma:Tkdistlinear:supkappa}: Based on the assumptions and  \cref{theorem:KMBasic}\cref{theorem:KMBasic:l2},   $\sum_{k \in \mathbb{N}}   \norm{x_{k} -T_{k}x_{k}}^{2} < \infty$. 
	Bearing \cref{Lemma:Tkdistlinear:xk} in mind and for every $k \in \mathbb{N}$, applying  \cref{eq:Lemma:Tkdistlinear:MetricSub} with $x=x_{k}$, we observe that
	\begin{align*}
	\sum_{k \in \mathbb{N}}  \dist^{2} \lr{x_{k}, C} = 	\sum_{k \in \mathbb{N}}  \dist^{2} \lr{x_{k}, \Fix T_{k}} \leq \gamma^{2} \sum_{k \in \mathbb{N}} \norm{x_{k} -T_{k}x_{k}}^{2}  < \infty.
	\end{align*}
	Altogether, the proof is complete.
\end{proof}

 The following result is inspired by the proof of  \cite[Proposition~4.2(iii)]{CombettesFejerAnalysis2001} which is on iteration sequences generated by $\mathcal{T}$-class operators (see \cite[Page~3]{CombettesFejerAnalysis2001} for a detailed definition). 
\begin{proposition} \label{prop:normxk+1xkl2}
	Suppose that $(\forall k \in \mathbb{N})$ $\lambda_{k} \in \left[ 0, \frac{1}{\alpha_{k}}  \right[$, that $\sum_{k \in \mathbb{N}}  \eta_{k}  \norm{ e_{k}} <\infty$, that $\limsup_{k \to \infty} \alpha_{k}  >0$, and that $\limsup_{k \to \infty} \lambda_{k} < \frac{1}{\limsup_{k \to \infty} \alpha_{k}}$. Define $M:=	\limsup_{k \to \infty} \frac{\lambda_{k}}{ \frac{1}{\alpha_{k}} -\lambda_{k}}$. Then the following hold.
	\begin{enumerate}
		\item \label{prop:normxk+1xkl2:M} $M < \infty$.
		\item \label{prop:normxk+1xkl2:LEQ} $\lr{\exists K \in \mathbb{N}}$ $\lr{\forall k \geq K}$ $\norm{x_{k+1} -x_{k}}^{2} \leq 2\ \lr{ M+1}  \lambda_{k}  \lr{\frac{1}{\alpha_{k}} -\lambda_{k}} \norm{x_{k} -T_{k}x_{k}}^{2}  + 2 \eta_{k}^{2} \norm{e_{k}}^{2} $.
		\item \label{prop:normxk+1xkl2:SUM}
		$\sum_{k \in \mathbb{N}} \norm{x_{k+1} -x_{k}}^{2} < \infty$. Consequently, $\norm{x_{k+1} -x_{k}} \to 0$. 	
	\end{enumerate}
\end{proposition}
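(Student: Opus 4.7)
The plan is to dispatch (i) by rewriting the ratio so the hypothesis on $\limsup \lambda_k$ becomes directly usable, then deduce (ii) by a standard Young-type bound on the one-step increment, and finally obtain (iii) by summing and invoking \cref{theorem:KMBasic}.

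For (i), I would first rewrite
\begin{align*}
\frac{\lambda_{k}}{\tfrac{1}{\alpha_{k}}-\lambda_{k}} \;=\; \frac{\alpha_{k}\lambda_{k}}{1-\alpha_{k}\lambda_{k}},
\end{align*}
which reduces the task to showing that $\alpha_{k}\lambda_{k}$ is eventually bounded away from $1$. Set $\alpha^{\ast}:=\limsup_{k\to\infty}\alpha_{k}$ and $\lambda^{\ast}:=\limsup_{k\to\infty}\lambda_{k}$. The elementary inequality $\limsup_{k\to\infty}(a_{k}b_{k})\leq (\limsup_{k\to\infty}a_{k})(\limsup_{k\to\infty}b_{k})$ for nonnegative sequences (proved in one line by choosing $N$ so that $a_{k}\leq\alpha^{\ast}+\varepsilon$ and $b_{k}\leq\lambda^{\ast}+\varepsilon$ simultaneously) yields $\limsup_{k\to\infty}\alpha_{k}\lambda_{k}\leq \alpha^{\ast}\lambda^{\ast}<1$ by hypothesis. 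Therefore there exist $K_{0}\in\mathbb{N}$ and $\mu\in[0,1)$ with $\alpha_{k}\lambda_{k}\leq \mu$ for all $k\geq K_{0}$, whence $\tfrac{\alpha_{k}\lambda_{k}}{1-\alpha_{k}\lambda_{k}}\leq \tfrac{\mu}{1-\mu}$ eventually, giving $M<\infty$.

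For (ii), the definition of $M$ provides some $K\geq K_{0}$ such that $\tfrac{\lambda_{k}}{1/\alpha_{k}-\lambda_{k}}\leq M+1$ for all $k\geq K$, equivalently
\begin{align*}
\lambda_{k}^{2}\leq (M+1)\,\lambda_{k}\!\left(\tfrac{1}{\alpha_{k}}-\lambda_{k}\right).
\end{align*}
From the iteration scheme \cref{eq:Tkxk} I read off $x_{k+1}-x_{k}=\lambda_{k}(T_{k}x_{k}-x_{k})+\eta_{k}e_{k}$, and then apply $\norm{a+b}^{2}\leq 2\norm{a}^{2}+2\norm{b}^{2}$ and the inequality just displayed to conclude the stated bound.

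For (iii), I would sum (ii) from $k=K$. The first summand is controlled by \cref{theorem:KMBasic}\cref{theorem:KMBasic:sum}, which gives $\sum_{k\in\mathbb{N}}\lambda_{k}(\tfrac{1}{\alpha_{k}}-\lambda_{k})\norm{x_{k}-T_{k}x_{k}}^{2}<\infty$ under the assumption $\sum_{k\in\mathbb{N}}\eta_{k}\norm{e_{k}}<\infty$. For the second summand, since $\sum_{k\in\mathbb{N}}\eta_{k}\norm{e_{k}}<\infty$ the terms $\eta_{k}\norm{e_{k}}$ are bounded by some constant $C$, hence $\eta_{k}^{2}\norm{e_{k}}^{2}\leq C\,\eta_{k}\norm{e_{k}}$ is summable. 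Adding the finitely many terms for $k<K$ gives $\sum_{k\in\mathbb{N}}\norm{x_{k+1}-x_{k}}^{2}<\infty$, and $\norm{x_{k+1}-x_{k}}\to 0$ follows. The only genuinely delicate point is (i); once the substitution $\lambda_{k}/(1/\alpha_{k}-\lambda_{k})=\alpha_{k}\lambda_{k}/(1-\alpha_{k}\lambda_{k})$ is noticed the rest is routine.
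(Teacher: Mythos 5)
Your proposal is correct and follows essentially the same route as the paper: the decomposition $x_{k+1}-x_{k}=\lambda_{k}(T_{k}x_{k}-x_{k})+\eta_{k}e_{k}$, the bound $\norm{a+b}^{2}\leq 2\norm{a}^{2}+2\norm{b}^{2}$ together with $\lambda_{k}^{2}=\frac{\lambda_{k}}{1/\alpha_{k}-\lambda_{k}}\cdot\lambda_{k}\lr{\frac{1}{\alpha_{k}}-\lambda_{k}}$, and summation via \cref{theorem:KMBasic}\cref{theorem:KMBasic:sum} plus the boundedness of $\eta_{k}\norm{e_{k}}$. The only (harmless) deviation is in part (i), where you establish $M<\infty$ through the identity $\frac{\lambda_{k}}{1/\alpha_{k}-\lambda_{k}}=\frac{\alpha_{k}\lambda_{k}}{1-\alpha_{k}\lambda_{k}}$ and the limsup-of-products inequality, whereas the paper bounds the limsup of the ratio directly by the ratio of the limsups; both are valid.
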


\begin{proof}
	\cref{prop:normxk+1xkl2:M}: Inasmuch as $\limsup_{k \to \infty} \alpha_{k}  >0$ and $(\forall k \in \mathbb{N})$ $\alpha_{k } \in \left]0,1\right]$, we observe that $\limsup_{k \to \infty} \alpha_{k}  \in \left]0,1\right]$,
	which, connected with $\limsup_{k \to \infty} \lambda_{k} < \frac{1}{\limsup_{k \to \infty} \alpha_{k}}$, ensures that 
	\begin{align*} 
	M=	\limsup_{k \to \infty} \frac{\lambda_{k}}{ \frac{1}{\alpha_{k}} -\lambda_{k}} \leq \frac{ \limsup_{k \to \infty}\lambda_{k}  }{ \frac{1}{\limsup_{k \to \infty} \alpha_{k}} - \limsup_{k \to \infty}\lambda_{k} } < \infty.
	\end{align*}
	
	\cref{prop:normxk+1xkl2:LEQ}: 	Due to  	\cref{prop:normxk+1xkl2:M}, there exists $K \in \mathbb{N}$ such that $\lr{\forall k \geq K}$  $\frac{\lambda_{k}}{ \frac{1}{\alpha_{k}} -\lambda_{k}} \leq M +1$. Using this in the last inequality below, we observe that  
for every $k \geq K$, 
	\begin{align*}
	\norm{x_{k+1} -x_{k}}^{2} &  \stackrel{\cref{eq:Tkxk}}{=} \norm{ \lambda_{k} \lr{T_{k}x_{k} -x_{k}} + \eta_{k} e_{k}}^{2}\\
&	~\leq~ 2 \lambda_{k}^{2} \norm{ T_{k} x_{k}-x_{k}}^{2}  + 2 \eta_{k}^{2} \norm{e_{k}}^{2}\\
&	~=~  2 \frac{\lambda_{k}}{ \frac{1}{\alpha_{k}} -\lambda_{k} } \lambda_{k}  \lr{\frac{1}{\alpha_{k}} -\lambda_{k}} \norm{x_{k} -T_{k}x_{k}}^{2}  + 2 \eta_{k}^{2} \norm{e_{k}}^{2}\\
 & ~	\leq ~ 2\lr{M+1}  \lambda_{k}  \lr{\frac{1}{\alpha_{k}} -\lambda_{k}} \norm{x_{k} -T_{k}x_{k}}^{2}  + 2 \eta_{k}^{2} \norm{e_{k}}^{2}.
	\end{align*}
	
	\cref{prop:normxk+1xkl2:SUM}:
	Combining \cref{prop:normxk+1xkl2:LEQ} with \cref{theorem:KMBasic}\cref{theorem:KMBasic:sum} and the assumption that $\sum_{k \in \mathbb{N}} \eta_{k}   \norm{ e_{k}} <\infty$, we  derive that $\sum_{k \in \mathbb{N}} \norm{x_{k+1} -x_{k}}^{2} < \infty$, which is followed immediately by the last assertion 
	that $\norm{x_{k+1} -x_{k}} \to 0$.
\end{proof}

\cref{theorem:Tkdistlinear} is inspired by \cite[Theorem~3]{LiangFadiliPeyre2016}. In particular,  it generalizes \cite[Theorem~3]{LiangFadiliPeyre2016} by replacing the nonexpansive operator $T$ therein with a sequence of averaged operators $(T_{k})_{k \in \mathbb{N}}$.
In   \cref{theorem:Tkdistlinear} below, we consider the convergence of the sequence $\lr{\dist  \lr{x_{k}, C}  }_{k \in \mathbb{N}}$, where $(x_{k})_{k \in \mathbb{N}}$ is generated by \cref{eq:Tkxk} with $(\forall k \in \mathbb{N})$ $C:=\Fix T_{k}  $.  \cref{theorem:Tkdistlinear} will be applied to deduce a $R$-linear convergence result on generalized proximal point algorithms in the next section.
\begin{proposition} \label{theorem:Tkdistlinear}
	Suppose that $(\forall k \in \mathbb{N})$ $C:=\Fix T_{k} \neq \varnothing$. Let $\bar{x} \in C$. Suppose that $(\forall k \in \mathbb{N})$ $\Id -T_{k}$ is metrically subregular at $\bar{x}$ for 
	$0 \in \lr{\Id -T_{k}}\bar{x}$, i.e., 
	\begin{align} \label{eq:theorem:Tkdistlinear:MetricSub} 
	(\exists \gamma_{k} >0) (\exists \delta_{k} >0) (\forall x \in B[\bar{x}; \delta_{k}]) \quad \dist \lr{x, \Fix T_{k}} \leq \gamma_{k} \norm{x - T_{k}x}.
	\end{align}
	Suppose that $\delta:=\inf_{k \in \mathbb{N}} \delta_{k} >0$ and that $  \sum_{k \in \mathbb{N}} \eta_{k}\norm{e_{k}} < \delta$.
	Let $0 < \tau \leq  \delta - \sum_{k \in \mathbb{N}} \eta_{k}\norm{e_{k}}  $ and let $x_{0} \in B[\bar{x}; \tau]$.
	Define for every $k \in \mathbb{N}$
	\begin{align*}
	&y_{k}:= (1-\lambda_{k})x_{k} +\lambda_{k} T_{k}x_{k}, \quad  \varepsilon_{k} :=  \eta_{k} \norm{e_{k}} \lr{2\norm{y_{k} -\Pro_{C}x_{k}} +\eta_{k} \norm{e_{k}}},\\
	&\beta_{k}:= \frac{\lambda_{k} \lr{\frac{1}{\alpha_{k}} -\lambda_{k}} }{\gamma_{k}^{2}}, \quad \text{and} \quad
	\rho_{k} :=
	\begin{cases}
	1-\beta_{k} \quad &\text{ if } \beta_{k} \leq 1;\\
	\frac{1}{1 + \beta_{k}} \quad &\text{ if } \beta_{k} >1.
	\end{cases}
	\end{align*}
	Then the following assertions hold.
	\begin{enumerate}

		\item  \label{theorem:Tkdistlinear:dist}  $(\forall k \in \mathbb{N})$  $\dist^{2} \lr{x_{k+1}, C} \leq \lr{1-\beta_{k}} \dist^{2} \lr{x_{k}, C} +\varepsilon_{k}$.
		\item \label{theorem:Tkdistlinear:rhok}  $(\forall k \in \mathbb{N})$ $\rho_{k} \in \left[0,1\right]$ and  $\dist^{2} \lr{x_{k+1}, C} \leq \rho_{k} \dist^{2} \lr{x_{k}, C} +\varepsilon_{k}$.
		\item  \label{theorem:Tkdistlinear:prod}  $(\forall k \in \mathbb{N})$ $\dist^{2} \lr{x_{k+1}, C} \leq  \lr{\prod^{k}_{i=0} \rho_{i}  }  \dist^{2} \lr{x_{0}, C} + \sum^{k}_{i=0} \lr{ \prod^{k}_{j=i+1} \rho_{j} } \varepsilon_{i}$. Moreover, the following hold. 
		\begin{enumerate}
			\item \label{theorem:Tkdistlinear:prod:<1}  Suppose that  $\limsup_{k \to \infty} \lr{ \prod^{k}_{i=0}\rho_{i}}^{\frac{1}{k}} <1$ and   $(\forall k \in \mathbb{N})$ $\rho_{k+1} \leq \rho_{k}$. Then $\sum_{k \in \mathbb{N}} \dist^{2} \lr{x_{k}, C} <\infty$. 
			
			Consequently, $\dist  \lr{x_{k}, C} \to 0$.
			\item  \label{theorem:Tkdistlinear:prod:linera} Suppose that $\rho:= \sup_{k \in \mathbb{N}} \rho_{k} <1$. Then 
			\begin{align*}
			(\forall k \in \mathbb{N})\quad \dist^{2} \lr{x_{k+1}, C} \leq \rho^{k} \lr{   \rho  \dist^{2} \lr{x_{0}, C} + \sum^{k}_{i=0}  \frac{ \varepsilon_{i} }{ \rho^{i} } }.  
			\end{align*}
			Consequently, if $\sum_{k \in \mathbb{N}}  \frac{ \varepsilon_{k} }{ \rho^{k} } < \infty$, then $\lr{\dist^{2} \lr{x_{k}, C}}_{ k \in \mathbb{N}}$ converges $R$-linearly  to $0$ .  
		\end{enumerate}
		
	\end{enumerate}
\end{proposition}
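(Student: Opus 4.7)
The plan is to feed the descent estimate of \cref{lemma:xkzCapFix} into the metric-subregularity inequality \cref{eq:theorem:Tkdistlinear:MetricSub} to obtain a one-step linear recursion for $\dist^2(x_k,C)$, then iterate and finally apply \cref{lemma:cauchyproduct} to harvest summability and the $R$-linear rate. Throughout I will use that \cref{Lemma:Tkdistlinear}\cref{Lemma:Tkdistlinear:xk} places every $x_k$ in $B[\bar x;\delta]$, which is what allows us to invoke \cref{eq:theorem:Tkdistlinear:MetricSub} at $x=x_k$.

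For \cref{theorem:Tkdistlinear:dist}, metric subregularity at $x=x_k$ gives $\dist(x_k,C)=\dist(x_k,\Fix T_k)\le \gamma_k\norm{x_k-T_kx_k}$, hence $\lambda_k(\tfrac{1}{\alpha_k}-\lambda_k)\norm{x_k-T_kx_k}^2\ge \beta_k\dist^2(x_k,C)$; plugging this into \cref{lemma:xkzCapFix} yields the claimed bound. For \cref{theorem:Tkdistlinear:rhok}: if $\beta_k\in[0,1]$ then $\rho_k=1-\beta_k$ is exactly the coefficient already produced in \cref{theorem:Tkdistlinear:dist}; if $\beta_k>1$ then $(1-\beta_k)\dist^2(x_k,C)\le 0\le (1+\beta_k)^{-1}\dist^2(x_k,C)=\rho_k\dist^2(x_k,C)$, so \cref{theorem:Tkdistlinear:dist} still delivers the desired inequality. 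Membership $\rho_k\in[0,1]$ is clear from $\beta_k\ge 0$ in both cases.

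The iterated inequality in \cref{theorem:Tkdistlinear:prod} is a one-line induction on $k$ from \cref{theorem:Tkdistlinear:rhok}. To deduce \cref{theorem:Tkdistlinear:prod:<1} from \cref{lemma:cauchyproduct}, I first need $\sum_{k}\varepsilon_k<\infty$: by \cref{theorem:KMBasic}\cref{theorem:KMBasic:induction}, $\norm{x_k-\bar x}\le\tau+\sum_i\eta_i\norm{e_i}\le\delta$; by \cref{eq:fact:lemma:yxzx:y} in \cref{fact:lemma:yxzx} (using $\lambda_k\le 1/\alpha_k$ so that $\lambda_k(\tfrac{1}{\alpha_k}-\lambda_k)\ge 0$), $\norm{y_k-\bar x}\le\norm{x_k-\bar x}\le\delta$; and since $\bar x\in C$, $\norm{\Pro_C x_k-\bar x}\le 2\norm{x_k-\bar x}\le 2\delta$, giving the uniform bound $\norm{y_k-\Pro_C x_k}\le 3\delta$. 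Therefore $\varepsilon_k\le \eta_k\norm{e_k}\lr{6\delta+\eta_k\norm{e_k}}$, and the summability of $\eta_k\norm{e_k}$ (which also makes the factor bounded) gives $\sum_k\varepsilon_k<\infty$. Applying \cref{lemma:cauchyproduct} with the present $\rho_k$ and $\varepsilon_k$ then yields $\sum_k\chi_k<\infty$ and, thanks to the monotonicity hypothesis $\rho_{k+1}\le\rho_k$, $\sum_k\xi_k<\infty$; combined with \cref{theorem:Tkdistlinear:prod} this forces $\sum_k\dist^2(x_k,C)<\infty$ and in particular $\dist(x_k,C)\to 0$. For \cref{theorem:Tkdistlinear:prod:linera} I simply majorize $\prod_{i}\rho_i\le\rho^{k+1}$ and $\prod_{j=i+1}^k\rho_j\le\rho^{k-i}$ in \cref{theorem:Tkdistlinear:prod}, factor $\rho^k$ out to reach the displayed estimate, and conclude $R$-linear convergence of $(\dist^2(x_k,C))_{k\in\mathbb{N}}$ by observing that $\limsup_k\bigl(\rho^k\cdot\mathrm{const}\bigr)^{1/k}=\rho<1$ once $\sum_k\varepsilon_k/\rho^k<\infty$.

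The only step that is not purely mechanical is the uniform bound on $\norm{y_k-\Pro_C x_k}$ needed for $\sum_k\varepsilon_k<\infty$ in \cref{theorem:Tkdistlinear:prod:<1}; after that, \cref{lemma:cauchyproduct} does the real work and part \cref{theorem:Tkdistlinear:prod:linera} is a direct majorization.
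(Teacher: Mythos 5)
Your proposal is correct and follows essentially the same route as the paper: feed the metric-subregularity bound $\dist(x_k,C)\le\gamma_k\norm{x_k-T_kx_k}$ (valid since \cref{Lemma:Tkdistlinear}\cref{Lemma:Tkdistlinear:xk} keeps $x_k$ in $B[\bar x;\delta]$) into \cref{lemma:xkzCapFix}, handle the two cases of $\rho_k$, iterate, and invoke \cref{lemma:cauchyproduct}. The only cosmetic difference is that you establish $\sup_k\norm{y_k-\Pro_Cx_k}<\infty$ by a self-contained triangle-inequality bound ($\le 3\delta$), whereas the paper uses $\norm{y_k-\Pro_Cx_k}\le\dist(x_k,C)$ together with a cited boundedness result; both are valid.
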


\begin{proof}
	\cref{theorem:Tkdistlinear:dist}: Based on \cref{Lemma:Tkdistlinear}\cref{Lemma:Tkdistlinear:xk}, we know that $(\forall k \in \mathbb{N})$ $ x_{k} \in B[\bar{x}; \delta]$.  For every $k \in \mathbb{N}$, apply \cref{eq:theorem:Tkdistlinear:MetricSub}  with  $x =x_{k}$ to deduce that 
	\begin{align}\label{eq:theorem:Tkdistlinear:dist} 
	\dist \lr{x_{k},  C}  = 	\dist \lr{x_{k},  \Fix T_{k}} \leq \gamma_{k}  \norm{x_{k} -T_{k}x_{k}}. 
	\end{align}  
	Apply  \cref{lemma:xkzCapFix} in the first inequality below to get that 
	\begin{align*}
	 \dist^{2} \lr{x_{k+1}, C} 
	~\leq~ &  \dist^{2} \lr{x_{k},  C}  -\lambda_{k}  \lr{\frac{1}{\alpha_{k}} -\lambda_{k}} \norm{x_{k} -T_{k}x_{k}}^{2} +\varepsilon_{k}\\
	\stackrel{\cref{eq:theorem:Tkdistlinear:dist}}{\leq} &   \dist^{2} \lr{x_{k},  C}  - \frac{\lambda_{k} \lr{\frac{1}{\alpha_{k}} -\lambda_{k}} }{\gamma_{k}^{2}} \dist^{2} \lr{x_{k}, C}  +\varepsilon_{k}\\
	~\leq~ & \lr{1 - \frac{\lambda_{k} \lr{\frac{1}{\alpha_{k}} -\lambda_{k}} }{\gamma_{k}^{2}}  } \dist^{2} \lr{x_{k}, C} +\varepsilon_{k},
	\end{align*}
	which guarantees  \cref{theorem:Tkdistlinear:dist}.

	\cref{theorem:Tkdistlinear:rhok}: Note that $(\forall k \in \mathbb{N})$ $\lr{1-\beta_{k}} \leq  \frac{1}{1 + \beta_{k} } \Leftrightarrow 1- \beta^{2}_{k} \leq 1$.
	So we know that $(\forall k \in \mathbb{N})$ $1-\beta_{k} \leq \rho_{k}$. 
	Hence, \cref{theorem:Tkdistlinear:rhok} is clear from  \cref{theorem:Tkdistlinear:dist} above. 
	
	\cref{theorem:Tkdistlinear:prod}: Applying \cref{theorem:Tkdistlinear:rhok}, by induction, we easily  establish that  
	\begin{align}\label{eq:theorem:Tkdistlinear:prod}
	(\forall k \in \mathbb{N}) \quad \dist^{2} \lr{x_{k+1}, C} \leq  \lr{ \prod^{k}_{i=0} \rho_{i} }  \dist^{2} \lr{x_{0}, C} + \sum^{k}_{i=0}  \lr{ \prod^{k}_{j=i+1} \rho_{j} } \varepsilon_{i}.
	\end{align}
For every $k \in \mathbb{N}$,  applying \cref{eq:fact:lemma:yxzx:y} in \cref{fact:lemma:yxzx} with $x=x_{k}$, $y_{x}=y_{k}$, $T =T_{k}$, $\lambda=\lambda_{k}$, $\alpha =\alpha_{k}$, and  $\bar{x} = \Pro_{C}x_{k}$, we observe that
\begin{align*}
\norm{y_{k}- \Pro_{C}x_{k}} \leq \norm{x_{k} -\Pro_{C}x_{k}} = \dist \lr{x_{k}, C},
\end{align*}
which, combined with \cite[Corollary~4.4(iii)]{OuyangStabilityKMIterations2022}, implies that $\lr{ \norm{y_{k}- \Pro_{C}x_{k}}  }_{k \in \mathbb{N}}$ is bounded. This together with the assumption $  \sum_{k \in \mathbb{N}} \eta_{k}\norm{e_{k}} < \delta <\infty$ necessitates that $\sum_{k \in \mathbb{N}} \varepsilon_{k} < \infty$.

	\cref{theorem:Tkdistlinear:prod:<1}:   As a consequence of 
	\cref{lemma:cauchyproduct}\cref{lemma:cauchyproduct:chi}$\&$\cref{lemma:cauchyproduct:sum}, our assumptions imply that 
	\begin{align*}
	\sum_{k \in \mathbb{N}} \lr{ \prod^{k}_{i=0} \rho_{i}} <\infty   \quad \text{and} \quad \sum_{k \in \mathbb{N}} \sum^{k}_{i=0}  \lr{ \prod^{k}_{j=i+1} \rho_{j}} \varepsilon_{i} < \infty.
	\end{align*}
	This combined with \cref{eq:theorem:Tkdistlinear:prod}  ensures that $\sum_{k \in \mathbb{N}} \dist^{2} \lr{x_{k}, C} < \infty$, which forces that $\dist  \lr{x_{k}, C}  \to 0$.

	\cref{theorem:Tkdistlinear:prod:linera}: This is immediate from \cref{eq:theorem:Tkdistlinear:prod}. 
\end{proof}

In \cref{Corollary:Tkdistlinear}\cref{Corollary:Tkdistlinear:ek=0:general} below, by applying \cref{theorem:Tkdistlinear}\cref{theorem:Tkdistlinear:dist}, we deduce the main result of \cite[Theorem~4.9]{OuyangStabilityKMIterations2022} again. 
\begin{corollary} \label{Corollary:Tkdistlinear}
	Suppose that $(\forall k \in \mathbb{N})$ $C:=\Fix T_{k} \neq \varnothing$ and that $(\forall k \in \mathbb{N})$ $e_{k} \equiv 0$ and $\eta_{k} \equiv 0$ in \cref{eq:Tkxk}. Let $\bar{x} \in C$. Suppose that $(\forall k \in \mathbb{N})$ $\Id -T_{k}$ is metrically subregular at $\bar{x}$ for 
	$0 \in \lr{\Id -T_{k}}\bar{x}$, i.e., 
	\begin{align*} 
	(\exists \gamma_{k} >0) (\exists \delta_{k} >0) (\forall x \in B[\bar{x}; \delta_{k}]) \quad \dist \lr{x, \Fix T_{k}} \leq \gamma_{k} \norm{x - T_{k}x}.
	\end{align*}
	Suppose that $\delta:=\inf_{k \in \mathbb{N}} \delta_{k} >0$. 
	Let  $x_{0} \in B[\bar{x}; \delta]$.
	Define 
	\begin{align*}
(\forall k \in \mathbb{N}) \quad 	\beta_{k}:= \frac{\lambda_{k} \lr{\frac{1}{\alpha_{k}} -\lambda_{k}} }{\gamma_{k}^{2}} \quad \text{and} \quad
	\rho_{k} := 1-\beta_{k}.
	\end{align*}
	Then the following hold.
	\begin{enumerate}
		\item  \label{Corollary:Tkdistlinear:ek=0:general} $(\forall k \in \mathbb{N})$   $  \rho_{k}  \in \left[0,1\right]$ and  $\dist  \lr{x_{k+1}, C} \leq \rho_{k}^{\frac{1}{2}} \dist  \lr{x_{k}, C}  $.
		Consequently,  $\lim_{k \to \infty}  \dist \lr{x_{k}, C}$ exists. 
		
		\item  \label{Corollary:Tkdistlinear:ek=0:linera} Suppose that $0< \underline{\lambda}:=\inf_{k \in \mathbb{N}} \lambda_{k} \leq \overline{\lambda}:=\sup_{k \in \mathbb{N}}  \lambda_{k} < \frac{1}{\alpha}$ where $\alpha:=\sup_{k \in \mathbb{N}}  \alpha_{k} >0 $ and that $\gamma :=\sup_{k \in \mathbb{N}}  \gamma_{k}  \in \mathbb{R}_{++}$.
		Define $\rho:=\sup_{k \in \mathbb{N}}  \rho_{k}$.
Then there exists $\hat{x} \in C$  such that  
\begin{align}  \label{eq:Corollary:Tkdistlinear:ek=0:linera}
(\forall k \in \mathbb{N}) \quad \norm{x_{k} -\hat{x}} \leq 2 \rho^{\frac{k}{2}} \dist \lr{x_{0}, C}.
\end{align}
Consequently, $\lr{x_{k}}_{k \in \mathbb{N}}$ converges $R$-linearly to a point $\hat{x} \in C$.
	\end{enumerate} 
\end{corollary}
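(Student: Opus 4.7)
The plan has two pieces. Part~\cref{Corollary:Tkdistlinear:ek=0:general} will follow directly from \cref{theorem:Tkdistlinear}\cref{theorem:Tkdistlinear:dist}: the hypotheses $e_{k} \equiv 0$ and $\eta_{k} \equiv 0$ force the error term $\varepsilon_{k}$ to vanish, so we obtain $\dist^{2}\lr{x_{k+1}, C} \leq \lr{1-\beta_{k}} \dist^{2}\lr{x_{k}, C} = \rho_{k} \dist^{2}\lr{x_{k}, C}$. Since $\beta_{k} \geq 0$ (from $\lambda_{k} \in \left[0, \frac{1}{\alpha_{k}}\right]$ and $\gamma_{k} > 0$), we have $\rho_{k} \leq 1$; and the displayed inequality combined with nonnegativity of its left-hand side enforces $\rho_{k} \geq 0$ whenever $\dist\lr{x_{k}, C} > 0$, the case $\dist\lr{x_{k}, C} = 0$ being trivial. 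Taking square roots yields the stated contraction, and the nonincreasing nonnegative sequence $\lr{\dist\lr{x_{k}, C}}_{k \in \mathbb{N}}$ therefore converges.

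For part~\cref{Corollary:Tkdistlinear:ek=0:linera}, I would first verify $\rho < 1$: since $\alpha_{k} \leq \alpha$ gives $\frac{1}{\alpha_{k}} \geq \frac{1}{\alpha}$, we have
\begin{align*}
\beta_{k} \geq \frac{\underline{\lambda}\lr{\frac{1}{\alpha} - \overline{\lambda}}}{\gamma^{2}} > 0 \quad \text{uniformly in } k,
\end{align*}
so $\rho \leq 1 - \frac{\underline{\lambda}\lr{\frac{1}{\alpha} - \overline{\lambda}}}{\gamma^{2}} \in \left[0, 1\right[\,$. Iterating part~\cref{Corollary:Tkdistlinear:ek=0:general} then delivers $\dist\lr{x_{k}, C} \leq \rho^{\frac{k}{2}} \dist\lr{x_{0}, C}$ for every $k \in \mathbb{N}$.

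The clean constant $2$ appearing in \cref{eq:Corollary:Tkdistlinear:ek=0:linera} arises not from a telescoping of $\norm{x_{j+1} - x_{j}}$ but from combining the geometric decay above with Fej\'er monotonicity of $\lr{x_{k}}_{k \in \mathbb{N}}$ with respect to $C$. Applying \cref{fact:lemma:yxzx}\cref{eq:fact:lemma:yxzx:y} with $\eta = 0$ and with $\bar{x}$ replaced by an arbitrary $z \in C \subseteq \Fix T_{k}$ yields $\norm{x_{k+1} - z} \leq \norm{x_{k} - z}$. Fixing $k$ and setting $z = \Pro_{C}x_{k}$ gives, for every $j \in \mathbb{N}$,
\begin{align*}
\norm{x_{k+j} - \Pro_{C}x_{k}} \leq \norm{x_{k} - \Pro_{C}x_{k}} = \dist\lr{x_{k}, C},
\end{align*}
so by the triangle inequality $\norm{x_{k+j} - x_{k}} \leq 2\dist\lr{x_{k}, C} \leq 2\rho^{\frac{k}{2}} \dist\lr{x_{0}, C}$. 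Hence $\lr{x_{k}}_{k \in \mathbb{N}}$ is Cauchy and converges to some $\hat{x} \in \mathcal{H}$; since $\dist\lr{x_{k}, C} \to 0$ and $C$ is closed, $\hat{x} \in C$. Letting $j \to \infty$ in the previous display produces \cref{eq:Corollary:Tkdistlinear:ek=0:linera}, and $R$-linear convergence of $\lr{x_{k}}_{k \in \mathbb{N}}$ is immediate upon extracting $k$-th roots.

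The main obstacle is the packaging rather than the analysis: recognizing that Fej\'er monotonicity \emph{restarted at time $k$} with anchor $\Pro_{C}x_{k}$ is exactly what turns single-step geometric decay of $\dist\lr{x_{k}, C}$ into a uniform tail bound with the clean constant~$2$. A direct telescoping via the averaged-operator inequality would also produce rate $\rho^{\frac{k}{2}}$, but with a noticeably worse constant involving $\overline{\lambda}, \underline{\lambda}, \alpha, \gamma$ and the geometric-series factor $1/(1 - \rho^{1/2})$.
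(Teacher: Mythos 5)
Your proof is correct and follows essentially the same route as the paper: part (i) is read off from \cref{theorem:Tkdistlinear} with $\varepsilon_{k}\equiv 0$, and part (ii) combines the uniform bound $\rho<1$ with Fej\'er monotonicity of $(x_{k})_{k\in\mathbb{N}}$ with respect to $C$ to obtain the constant $2$. The only difference is cosmetic: where the paper cites \cite[Theorem~4.9(i)]{OuyangStabilityKMIterations2022} and \cite[Theorem~5.12]{BC2017} for the step from $\dist\lr{x_{k},C}\leq\rho^{k/2}\dist\lr{x_{0},C}$ to \cref{eq:Corollary:Tkdistlinear:ek=0:linera}, you prove that standard fact inline via the restarted-anchor argument (and your lower bound $\beta_{k}\geq\underline{\lambda}\lr{\frac{1}{\alpha}-\overline{\lambda}}/\gamma^{2}$ even corrects a $\gamma$-versus-$\gamma^{2}$ slip in the paper's display).
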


\begin{proof}
We uphold the notation used in \cref{theorem:Tkdistlinear} above. 	Notice that if $(\forall k \in \mathbb{N})$ $e_{k} \equiv 0$ and $\eta_{k} \equiv 0$, then, by definition, $(\forall k \in \mathbb{N})$ $\varepsilon_{k} \equiv 0$  in \cref{theorem:Tkdistlinear}.  Then \cref{theorem:Tkdistlinear}\cref{theorem:Tkdistlinear:dist} forces that $(\forall k \in \mathbb{N})$  $1 - \beta_{k} \in \mathbb{R}_{+}$, that is, $\beta_{k} \leq 1$. Hence, we have that $(\forall k \in \mathbb{N})$ $\rho_{k} = 1- \beta_{k} \in \left[0,1\right]$  in \cref{theorem:Tkdistlinear}.
	
	\cref{Corollary:Tkdistlinear:ek=0:general}: According to our analysis above, this is immediate from \cref{theorem:Tkdistlinear}\cref{theorem:Tkdistlinear:rhok}.

	\cref{Corollary:Tkdistlinear:ek=0:linera}: Because $(\forall k \in \mathbb{N})$ $\rho_{k} = 1- \beta_{k} \in \left[0,1\right]$ and $0 <  \underline{\lambda} \leq \overline{\lambda} < \frac{1}{\alpha} < \infty$, we observe that 
	\begin{align}\label{eq:Corollary:Tkdistlinear:ek=0:linera:rho}
	\rho =\sup_{k \in \mathbb{N}}  \rho_{k} = 1- \inf_{k \in \mathbb{N}} \frac{\lambda_{k} \lr{\frac{1}{\alpha_{k}} -\lambda_{k}} }{\gamma_{k}^{2}} \leq  1- \frac{ \underline{\lambda} \lr{ \frac{1}{ \alpha}  - \overline{\lambda}  } }{\gamma} \in \left[0, 1\right[\,.
	\end{align}
	Notice that \cite[Proposition~4.23(ii)]{BC2017} implies that $C$ is nonempty closed and convex and that our assumptions necessitate $(\forall k \in \mathbb{N})$ $0 <  \inf_{i \in \mathbb{N}} \lambda_{i}  \leq \lambda_{k} \leq  \sup_{i \in \mathbb{N}}  \lambda_{i} < \frac{1}{\sup_{i \in \mathbb{N}}  \alpha_{i} } \leq \frac{1}{\alpha_{k}}$. Hence, 
\cref{eq:Corollary:Tkdistlinear:ek=0:linera:rho} combined with  \cref{Corollary:Tkdistlinear:ek=0:general} above leads to
\begin{align*}
(\forall k \in \mathbb{N}) \quad \dist  \lr{x_{k+1}, C} \leq \rho^{\frac{1}{2}} \dist  \lr{x_{k}, C},
\end{align*}
which, connecting with \cite[Theorem~4.9(i)]{OuyangStabilityKMIterations2022} and \cite[Theorem~5.12]{BC2017}, ensures 
 \cref{eq:Corollary:Tkdistlinear:ek=0:linera}. 
\end{proof}


\section{Generalized Proximal Point Algorithms} \label{sec:GPPA}
Throughout this section, $A :\mathcal{H} \to 2^{\mathcal{H}}$ is maximally monotone with $\zer A \neq \varnothing$ and 
\begin{align} \label{eq:GPPA}
(\forall k \in \mathbb{N}) \quad x_{k+1} = \lr{1-\lambda_{k}}x_{k} +\lambda_{k} \J_{c_{k} A}x_{k} +\eta_{k}e_{k}, 
\end{align}
where $x_{0} \in \mathcal{H}$ is the \emph{initial point} and $(\forall k \in \mathbb{N})$ $\lambda_{k} \in \left[0,2\right]$ and $\eta_{k} \in \mathbb{R}_{+}$ are the \emph{relaxation coefficients},  $c_{k} \in \mathbb{R}_{++}$ is the \emph{regularization coefficient},  and $e_{k} \in \mathcal{H}$ is the \emph{error term}. 

Generalized proximal point algorithms generate the iteration sequence by conforming to the scheme  \cref{eq:GPPA}. The classic proximal point algorithm generates the iteration sequence by following \cref{eq:GPPA} with $(\forall k \in \mathbb{N})$ $\lambda_{k} \equiv 1$, $e_{k} \equiv 0$, and $\eta_{k} \equiv 0$.
In this section, we investigate the linear convergence of generalized proximal point algorithms for solving the monotone inclusion problem, i.e., finding a zero of the associated monotone operator.

\subsection{Auxiliary results} \label{section:exactGPPA}

\cref{theorem:exactPAlinear} is   motivated by \cite[Theorem~3.1]{Leventhal2009} and  \cite[Theorem~3.1]{ShenPan2016}. In particular,   \cref{theorem:exactPAlinear}\cref{theorem:exactPAlinear:1:leq} and  \cref{theorem:exactPAlinear}\cref{theorem:exactPAlinear:lambdakK}\textcolor{blue}{i.}\,reduce to \cite[Theorem~3.1]{Leventhal2009} and  \cite[Theorem~3.1]{ShenPan2016}, respectively,  when $(\forall k \in \mathbb{N})$ $c_{k} \equiv c \in \mathbb{R}_{++}$. 
\cref{theorem:exactPAlinear}\cref{theorem:exactPAlinear:1} works on the local convergence of the exact version of the proximal point algorithm.
Note that  if we restrict $(\forall k \in \mathbb{N})$ $\lambda_{k} \equiv 1$ in \cref{theorem:exactPAlinear}\cref{theorem:exactPAlinear:lambdakK},    the convergence rate in \cref{theorem:exactPAlinear}\cref{theorem:exactPAlinear:1K} is better than that of   \cref{theorem:exactPAlinear}\cref{theorem:exactPAlinear:lambdakK} since $\frac{1}{  1 + \frac{c_{k}^{2}}{\kappa^{2}}  } = 1- \frac{1}{1 + \frac{\kappa^{2}}{c_{k}^{2}} } \leq 1 - \frac{1}{\lr{ 1+ \frac{\kappa}{c_{k}}  }^{2}} $.
\begin{proposition} \label{theorem:exactPAlinear}
	Suppose that $(\forall k \in \mathbb{N})$ $e_{k} \equiv 0$ and $\eta_{k} \equiv 0$ in \cref{eq:GPPA}. 	Let $\bar{x}$ be in $\zer A$. Suppose that $A$ is metrically subregular at $\bar{x}$ for $0 \in A\bar{x}$, i.e., 
	\begin{align} \label{eq:theorem:exactPAlinear:MetricSub} 
	(\exists \kappa >0) (\exists \delta >0) (\forall x \in B[\bar{x}; \delta]) \quad \dist \lr{x, A^{-1}0} \leq \kappa \dist \lr{0, Ax}.
	\end{align}
Set $c:=\inf_{k \in \mathbb{N}} c_{k}$.		Then the following assertions hold.
	\begin{enumerate}
		\item \label{theorem:exactPAlinear:1} Suppose that $x_{0} \in B[\bar{x}; \delta] $ and   that $(\forall k \in \mathbb{N})$ $\lambda_{k} \equiv 1$. Then 
		\begin{enumerate}
			\item \label{theorem:exactPAlinear:1:leq}  $(\forall k \in \mathbb{N})$ $ \dist \lr{  x_{k+1}, \zer A}  \leq \frac{1}{ \sqrt{1 + \frac{c_{k}^{2}}{\kappa^{2}} } } \dist \lr{x_{k}, \zer A }$;
			\item \label{theorem:exactPAlinear:1:Rlinear}  if $c  >0$, then $\lr{x_{k}}_{k \in \mathbb{N}}$ converges $R$-linearly to a point $\hat{x} \in \zer A$.
		\end{enumerate}

		\item \label{theorem:exactPAlinear:xktobarx} Suppose that $x_{k} \to \bar{x}$.   Then the following hold. 
		\begin{enumerate}
			\item \label{theorem:exactPAlinear:lambdakK} Set $\lr{\forall k \in \mathbb{N}} $ $ \rho_{k} := \lr{ 1 - \lambda_{k} \lr{2- \lambda_{k}} \frac{1}{ \lr{1+ \frac{\kappa}{c_{k}}}^{2} }}^{\frac{1}{2}}$.  Then 
			\begin{enumerate}
				\item \label{theorem:exactPAlinear:lambdakK:leq} there exists $K \in \mathbb{N}$ such that $	\lr{\forall k \geq K} $ $\dist \lr{x_{k+1}, \zer A}  \leq  \rho_{k} \lr{x_{k}, \zer A}$;
				\item \label{theorem:exactPAlinear:lambdakK:Rlinear} if    $c >0$ and $0< \underline{\lambda}:=\inf_{k \in \mathbb{N}} \lambda_{k} \leq \overline{\lambda}:=\sup_{k \in \mathbb{N}}  \lambda_{k} <  2 $, then		$\lr{x_{k}}_{k \in \mathbb{N}}$ converges $R$-linearly to a point $\hat{x} \in \zer A$.
			\end{enumerate}

			\item \label{theorem:exactPAlinear:1K} Suppose that $(\forall k \in \mathbb{N})$ $\lambda_{k} \equiv 1$. Then
			\begin{enumerate}
				\item there exists $K \in \mathbb{N}$ such that $	\lr{\forall k \geq K}$ $  \dist \lr{  x_{k+1}, \zer A}  \leq \frac{1}{ \sqrt{1 + \frac{c_{k}^{2}}{\kappa^{2}} } } \dist \lr{x_{k}, \zer A }$;
				\item if $c  >0$, then $\lr{x_{k}}_{k \in \mathbb{N}}$ converges $R$-linearly to a point $\hat{x} \in \zer A$.
			\end{enumerate}  

		\end{enumerate} 
	\end{enumerate}
	
\end{proposition}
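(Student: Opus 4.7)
The plan is to reduce all four assertions to the contraction inequalities already established in the paper: \cref{lemma:JgammaAINEQ} gives the rate $1/\sqrt{1+c_k^2/\kappa^2}$ whenever the resolvent image lands in the subregularity ball, \cref{theorem:MetrSubregUpperBound}\cref{theorem:MetrSubregUpperBound:yx} gives the rate $\rho_k$ for general $\lambda_k \in \left]0,2\right[\,$, and \cref{Corollary:Tkdistlinear}\cref{Corollary:Tkdistlinear:ek=0:linera} upgrades $R$-linear decay of $\dist(x_k,\zer A)$ to $R$-linear convergence of $(x_k)$ itself. Throughout I use that $\bar{x} \in \Fix \J_{c_k A}$ by \cref{cor:fact:FixJcAzerA}, and that metric subregularity of $A$ at $\bar{x}$ passes to $\Id - \J_{c_k A}$ with constant $1+\kappa/c_k$ by \cref{lemma:metricallysubregularEQ}.

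For part \cref{theorem:exactPAlinear:1}, I would first show by induction that $x_k \in B[\bar{x};\delta]$ for every $k$: the base case is the hypothesis, and the inductive step uses $x_{k+1} = \J_{c_k A}x_k$ together with nonexpansiveness of $\J_{c_k A}$ at its fixed point $\bar{x}$ to conclude $\norm{x_{k+1}-\bar{x}} \le \norm{x_k-\bar{x}} \le \delta$. Since $\J_{c_k A}x_k = x_{k+1} \in B[\bar{x};\delta]$, \cref{lemma:JgammaAINEQ} applied with $x=x_k$ and $\gamma=c_k$ produces the inequality in \cref{theorem:exactPAlinear:1:leq}. When $c>0$, the rate is uniformly bounded below $1$, and I invoke \cref{Corollary:Tkdistlinear}\cref{Corollary:Tkdistlinear:ek=0:linera} with $T_k=\J_{c_k A}$, $\alpha_k=1/2$, and $\lambda_k\equiv 1$ to obtain $R$-linear convergence of $(x_k)$; the uniform metric subregularity constants $\gamma_k = 1+\kappa/c_k \le 1+\kappa/c$ and the relation $\overline{\lambda}=1<2=1/\alpha$ verify the hypotheses of that corollary.

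For part \cref{theorem:exactPAlinear:xktobarx}, the convergence $x_k \to \bar{x}$ combined with nonexpansiveness of $\J_{c_k A}$ fixing $\bar{x}$ forces $\J_{c_k A}x_k \to \bar{x}$, so there exists $K \in \mathbb{N}$ with $\J_{c_k A}x_k \in B[\bar{x};\delta]$ for all $k \ge K$. For \cref{theorem:exactPAlinear:lambdakK:leq} I apply \cref{theorem:MetrSubregUpperBound}\cref{theorem:MetrSubregUpperBound:yx} with $x=x_k$, $\gamma=c_k$, $\lambda=\lambda_k$, $\eta=0$ on the tail $k \ge K$ (handling the boundary values $\lambda_k \in \{0,2\}$ separately via \cref{lemma:resolvents:yxzx}, where $\rho_k=1$ and the inequality is automatic from Fej\'er monotonicity); taking $\dist(x_{k+1},\zer A) \le \norm{x_{k+1}-\Pro_{\zer A}x_k}$ on the left delivers the stated bound. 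Under $c>0$ and $0<\underline{\lambda}\le\overline{\lambda}<2$, $\sup_k \rho_k < 1$, so restarting the iteration at index $K$ places us in the setting of \cref{Corollary:Tkdistlinear}\cref{Corollary:Tkdistlinear:ek=0:linera} and yields the $R$-linear convergence claimed in \cref{theorem:exactPAlinear:lambdakK:Rlinear}. Part \cref{theorem:exactPAlinear:1K} is then the specialization $\lambda_k\equiv 1$, proved exactly as \cref{theorem:exactPAlinear:1} but restricted to $k\ge K$ using \cref{lemma:JgammaAINEQ} for the sharper rate.

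The only real technical subtlety is guaranteeing that the resolvent image sits inside the subregularity ball so that \cref{lemma:JgammaAINEQ} or \cref{theorem:MetrSubregUpperBound} is applicable; this is handled by induction in \cref{theorem:exactPAlinear:1} and by eventual trapping from $x_k \to \bar{x}$ in \cref{theorem:exactPAlinear:xktobarx}. Everything else is bookkeeping of constants and invocation of the machinery developed in \cref{section:MetricalSubregularity} and \cref{sec:KMannIterations}.
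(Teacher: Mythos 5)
Your proposal is correct and follows essentially the same route as the paper: Fej\'er/nonexpansiveness arguments to trap $x_k$ (or $\J_{c_k A}x_k$) in $B[\bar{x};\delta]$, then \cref{lemma:JgammaAINEQ} for the $\lambda_k\equiv 1$ rate and \cref{theorem:MetrSubregUpperBound}\cref{theorem:MetrSubregUpperBound:yx} for the general rate, and finally the Krasnosel'ski\v{\i}-Mann machinery of \cref{sec:KMannIterations} to upgrade linear decay of $\dist(x_k,\zer A)$ to $R$-linear convergence of the iterates. The only (harmless) deviations are that you invoke \cref{Corollary:Tkdistlinear}\cref{Corollary:Tkdistlinear:ek=0:linera} as a black box where the paper reruns that argument with the sharper rate and cites external quasi-Fej\'er results, and that you explicitly treat the boundary values $\lambda_k\in\{0,2\}$ in \cref{theorem:exactPAlinear:lambdakK:leq}, a case the paper's appeal to \cref{theorem:MetrSubregUpperBound} technically does not cover.
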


\begin{proof}
	\cref{theorem:exactPAlinear:1}:  For every $k \in \mathbb{N}$, applying \cref{lemma:resolvents:yxzx} with $x=x_{k}$, $y_{x}=x_{k+1}$, $\lambda =\lambda_{k}$, and $\gamma =c_{k}$, we know that $\norm{x_{k+1} -\bar{x}} \leq \norm{x_{k} -\bar{x}}$; and employing 	\cref{lemma:JGammaAFix} with $x=x_{k}$, $\gamma =c_{k}$, and $z=\bar{x}$, we observe that $\norm{\J_{c_{k} A}x_{k} -\bar{x}} \leq \norm{x_{k} -\bar{x}}$. Combine these results with $x_{0} \in B[\bar{x}; \delta] $ to deduce that 	
	\begin{align} \label{eq:theorem:exactPAlinear}
	x_{k} \in B[\bar{x}; \delta]  \quad \text{and} \quad  \J_{c_{k} A}x_{k} \in B[\bar{x}; \delta].
	\end{align}
	
	Let $k \in \mathbb{N}$. 
	Taking \cref{eq:theorem:exactPAlinear}  into account and applying	\cref{lemma:JgammaAINEQ}   with $x =x_{k}$ and $\gamma =c_{k}$, we  obtain that 
	\begin{align*}    
	\dist \lr{  x_{k+1}, \zer A} = \dist \lr{ \J_{c_{k} A}x_{k},  \zer A } \leq \frac{1}{ \sqrt{1 + \frac{c_{k}^{2}}{\kappa^{2}} } } \dist \lr{x_{k}, \zer A }.
	\end{align*}
	
	Suppose that $c =\inf_{k \in \mathbb{N}} c_{k} >0$. Then   $\rho:=\sup_{k \in \mathbb{N}} \frac{1}{ \sqrt{1 + \frac{c_{k}^{2}}{\kappa^{2}} } }  = \frac{1}{ \sqrt{1 + \frac{c^{2}}{\kappa^{2}} } }  \in \left[0,1\right[\,$ and  
	\begin{align} \label{eq:theorem:exactPAlinear:1:rho}
	(\forall k \in \mathbb{N}) \quad \dist \lr{  x_{k+1}, \zer A}  \leq \rho \dist \lr{x_{k}, \zer A }.
	\end{align} 
	Notice that, via \cite[Proposition~23.39]{BC2017} and by virtue of the maximal monotonicity of $A$, $\zer A$ is closed and convex. 
	 Similarly with the proof of \cref{Corollary:Tkdistlinear}\cref{Corollary:Tkdistlinear:ek=0:linera},  combining \cref{eq:theorem:exactPAlinear:1:rho} with \cite[Theorem~5.6(i)]{OuyangStabilityKMIterations2022} and \cite[Theorem~5.12]{BC2017}, we obtain that $\lr{x_{k}}_{k \in \mathbb{N}}$ converges $R$-linearly to a point $\hat{x} \in \zer A$.  
	
	\cref{theorem:exactPAlinear:xktobarx}: Because $x_{k} \to \bar{x}$,   we know that there exists $K \in \mathbb{N}$ such that  $\lr{\forall k \geq K} $ $	x_{k} \in B[\bar{x}; \delta]$, which, connected with \cref{lemma:JGammaAFix}, ensures that 
	\begin{align}\label{eq:theorem:exactPAlinear:K}
	\lr{\forall k \geq K}  \quad  \J_{c_{k} A}x_{k} \in B[\bar{x}; \delta].
	\end{align}

	\cref{theorem:exactPAlinear:lambdakK}: Let $k \in \mathbb{N}$ such that $k \geq K$.   Bearing \cref{eq:theorem:exactPAlinear:K} in mind and applying  \cref{theorem:MetrSubregUpperBound}\cref{theorem:MetrSubregUpperBound:yx} with $x=x_{k}$, $y_{x} =x_{k+1}$, $\gamma =c_{k}$, $\lambda=\lambda_{k}$,  and  $\rho =\rho_{k}$,  we deduce that 
	\begin{align} \label{eq:theorem:exactPAlinear:lambdakK:i}
	\dist \lr{  x_{k+1}, \zer A} \leq \norm{x_{k+1} - \Pro_{\zer A}x_{k}} \leq   \rho_{k} \norm{x_{k} - \Pro_{\zer A}x_{k}}=  \rho_{k} \dist \lr{x_{k}, \zer A }.
	\end{align}
	
	Suppose that $0< \underline{\lambda} =\inf_{k \in \mathbb{N}} \lambda_{k} \leq \overline{\lambda} =\sup_{k \in \mathbb{N}}  \lambda_{k} <  2 $  and $c =\inf_{k \in \mathbb{N}} c_{k} >0$. Then $\rho:= \sup_{k \in \mathbb{N}} \rho_{k} \leq \lr{ 1 - \underline{\lambda }  \lr{2- \overline{\lambda } } \frac{1}{ \lr{1+ \frac{\kappa}{c }}^{2} }}^{\frac{1}{2}} \in \left[0,1\right[\,$. Moreover, due to \cref{eq:theorem:exactPAlinear:lambdakK:i}, 
	\begin{align*}
	\lr{\forall k \geq K} \quad \dist \lr{  x_{k+1}, \zer A} \leq \rho^{k-K+1}  \dist \lr{x_{K}, \zer A },
	\end{align*}
	which, combined with \cite[Theorem~5.6(i)]{OuyangStabilityKMIterations2022} and \cite[Theorem~5.12]{BC2017}, guarantees that 	$\lr{x_{k}}_{k \in \mathbb{N}}$ converges $R$-linearly to a point $\hat{x} \in \zer A$.

	\cref{theorem:exactPAlinear:1K}: Invoking  \cref{eq:theorem:exactPAlinear:K} and employing almost the same arguments used in the proof of  \cref{theorem:exactPAlinear:1:leq} and \cref{theorem:exactPAlinear:lambdakK}\textcolor{blue}{ii.}\,above, we obtain \cref{theorem:exactPAlinear:1K}. 
\end{proof}

\begin{corollary} \label{corollary:ckxkxk+1to0}
 Suppose that	$\sum_{k \in \mathbb{N}}   \eta_{k} \norm{e_{k}} <\infty$. Set $c:=\inf_{k \in \mathbb{N}} c_{k} $. Then the following hold.   
	\begin{enumerate}
		\item  \label{corollary:ckxkxk+1to0:Jck} Suppose that $0 < \liminf_{k \to \infty}  \lambda_{k} \leq \limsup_{k \to \infty} \lambda_{k} < 2$. Then $\sum_{k \in \mathbb{N}}   \norm{x_{k} -\J_{ c_{k} A}x_{k}}^{2} < \infty$. Moreover, if $c  >0$, then $\frac{1}{c_{k}} \lr{x_{k} - \J_{ c_{k} A}x_{k} }\to 0$.  
		\item  \label{corollary:ckxkxk+1to0:xk+1} 
		Suppose that  $\sup_{k \in \mathbb{N}} \lambda_{k} <2$. Then $\sum_{k \in \mathbb{N}}   \norm{x_{k} -x_{k+1}}^{2} < \infty$. Moreover, if $c  >0$ and $\lambda := \inf_{k \in \mathbb{N}} \lambda_{k} >0$, then $\frac{1}{c_{k}} \lr{x_{k} - \J_{ c_{k} A}x_{k} }\to 0$.  
		
	\end{enumerate}
\end{corollary}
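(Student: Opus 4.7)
The plan is to exhibit the generalized proximal point algorithm \cref{eq:GPPA} as a particular instance of the inexact non-stationary Krasnosel'ski\v{\i}-Mann iteration \cref{eq:Tkxk}, which allows the abstract results of \cref{sec:KMannIterations} to be invoked directly. Setting $T_{k} := \J_{c_{k} A}$ and $\alpha_{k} := \tfrac{1}{2}$ for every $k \in \mathbb{N}$, \cref{cor:fact:FixJcAzerA} ensures that each $T_{k}$ is $\tfrac{1}{2}$-averaged with $\Fix T_{k} = \zer A$; in particular $\cap_{k \in \mathbb{N}} \Fix T_{k} = \zer A \neq \varnothing$, and \cref{eq:GPPA} coincides with \cref{eq:Tkxk}.

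For \cref{corollary:ckxkxk+1to0:Jck}, with $\alpha_{k} \equiv \tfrac{1}{2}$ the quantity $\lambda_{k}(1/\alpha_{k} - \lambda_{k})$ reduces to $\lambda_{k}(2-\lambda_{k})$. The hypothesis $0 < \liminf_{k} \lambda_{k} \leq \limsup_{k} \lambda_{k} < 2$ therefore forces $\liminf_{k} \lambda_{k}(2-\lambda_{k}) > 0$, so \cref{theorem:KMBasic}\cref{theorem:KMBasic:l2} immediately yields $\sum_{k} \norm{x_{k} - \J_{c_{k} A} x_{k}}^{2} < \infty$ and hence $\norm{x_{k} - \J_{c_{k} A} x_{k}} \to 0$. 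The moreover clause is then trivial: if $c = \inf_{k} c_{k} > 0$, then $\norm{(x_{k} - \J_{c_{k} A} x_{k})/c_{k}} \leq (1/c) \norm{x_{k} - \J_{c_{k} A} x_{k}} \to 0$.

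For \cref{corollary:ckxkxk+1to0:xk+1}, the assumption $\sup_{k} \lambda_{k} < 2$ forces $\lambda_{k} \in \left[0, 1/\alpha_{k}\right[ = \left[0, 2\right[$ for each $k$ as well as $\limsup_{k} \lambda_{k} < 2 = 1/\limsup_{k} \alpha_{k}$, which together with $\sum_{k} \eta_{k} \norm{e_{k}} < \infty$ matches the hypotheses of \cref{prop:normxk+1xkl2} exactly. Its conclusion \cref{prop:normxk+1xkl2:SUM} is $\sum_{k} \norm{x_{k+1} - x_{k}}^{2} < \infty$, and in particular $\norm{x_{k+1} - x_{k}} \to 0$. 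For the moreover part, I would exploit the identity $\lambda_{k}(\J_{c_{k} A} x_{k} - x_{k}) = (x_{k+1} - x_{k}) - \eta_{k} e_{k}$ read off from \cref{eq:GPPA}: under $\lambda = \inf_{k} \lambda_{k} > 0$ and $\eta_{k} \norm{e_{k}} \to 0$ (a consequence of $\sum_{k} \eta_{k} \norm{e_{k}} < \infty$), this yields $\norm{x_{k} - \J_{c_{k} A} x_{k}} \leq (1/\lambda)\lr{\norm{x_{k+1} - x_{k}} + \eta_{k} \norm{e_{k}}} \to 0$, and dividing by $c_{k} \geq c > 0$ gives the desired convergence.

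Both parts are thus direct instantiations of already-established abstract results, so no substantial obstacle is anticipated. The only detail requiring attention is verifying that the specialization $\alpha_{k} = \tfrac{1}{2}$ transforms each abstract hypothesis into the one stated in the corollary, which is a matter of straightforward algebra.
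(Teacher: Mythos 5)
Your proposal is correct and follows essentially the same route as the paper: both parts are obtained by specializing the Krasnosel'ski\v{\i}--Mann results with $T_{k}=\J_{c_{k}A}$ and $\alpha_{k}=\tfrac{1}{2}$ (via \cref{cor:fact:FixJcAzerA}, \cref{theorem:KMBasic}\cref{theorem:KMBasic:l2}, and \cref{prop:normxk+1xkl2}\cref{prop:normxk+1xkl2:SUM}), and the paper's proof of the second ``moreover'' clause uses exactly your identity $\frac{1}{c_{k}}\lr{x_{k}-\J_{c_{k}A}x_{k}}=\frac{1}{c_{k}\lambda_{k}}\lr{x_{k}-x_{k+1}+\eta_{k}e_{k}}$. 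No gaps.
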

	
\begin{proof}
	According to \cref{cor:fact:FixJcAzerA},  $(\forall k \in \mathbb{N})$ $\J_{c_{k} A}$ is $\frac{1}{2}$-averaged operator and   $   \Fix \J_{c_{k} }A =\zer A \neq \varnothing$.
	
	\cref{corollary:ckxkxk+1to0:Jck}:  Applying \cref{theorem:KMBasic}\cref{theorem:KMBasic:l2}  with $(\forall k \in \mathbb{N})$ $T_{k} = \J_{ c_{k} A}$ and $\alpha_{k} =\frac{1}{2}$, we deduce $\sum_{k \in \mathbb{N}}   \norm{x_{k} -\J_{ c_{k} A}x_{k}}^{2} < \infty$, which forces $\norm{x_{k} -\J_{ c_{k} A}x_{k} } \to 0 $. 
	
	In addition, if  $c=\inf_{k \in \mathbb{N}} c_{k} >0$, then 
	\begin{align*}
	\norm{\frac{1}{c_{k}} \lr{x_{k} - \J_{ c_{k} A}x_{k} }} \leq \frac{1}{c} \norm{x_{k} -\J_{ c_{k} A}x_{k} }  \to 0.
	\end{align*}

\cref{corollary:ckxkxk+1to0:xk+1}: 	 It is not difficult to verify that $\sup_{k \in \mathbb{N}} \lambda_{k} <2$ if and only if $
 \limsup_{k \to \infty} \lambda_{k} < 2$ and $(\forall k \in \mathbb{N})$ $ \lambda_{k} <2$.
Apply  \cref{prop:normxk+1xkl2}\cref{prop:normxk+1xkl2:SUM} with $(\forall k \in \mathbb{N})$ $T_{k} = \J_{ c_{k} A}$ and $\alpha_{k} =\frac{1}{2}$ to yield that $\sum_{k \in \mathbb{N}}   \norm{x_{k+1} - x_{k}}^{2} < \infty$ and $ \norm{x_{k+1} - x_{k}} \to 0$.
Suppose that   $c=\inf_{k \in \mathbb{N}} c_{k} >0$ and $\lambda = \inf_{k \in \mathbb{N}} \lambda_{k} >0$. Based on \cref{eq:GPPA},
\begin{align*}
 \norm{ \frac{1}{c_{k}} \lr{x_{k} - \J_{ c_{k} A}x_{k} } }= \norm{\frac{1}{c_{k} \lambda_{k}} \lr{x_{k} -x_{k+1} + \eta_{k}e_{k}}} \leq \frac{1}{c \lambda} \lr{  \norm{x_{k} -x_{k+1} } + \eta_{k}\norm{e_{k}} }\to 0.
\end{align*}

Altogether, the proof is complete.
\end{proof}

\begin{proposition} \label{proposition:Axkweak}
	Suppose that $\sum_{k \in \mathbb{N}} \eta_{k} \norm{e_{k}} < \infty$ and that $\sum_{k \in \mathbb{N}} \abs{1-\frac{c_{k+1}}{c_{k}}} < \infty$. Then the following hold.
	\begin{enumerate}
		\item  \label{proposition:Axkweak:lim} 
		$\lim_{k \to \infty} \norm{x_{k} -\J_{c_{k} A}x_{k}}$ exists. 
		\item  \label{proposition:Axkweak:xkJck} Assume that $\sum_{k \in \mathbb{N}} \lambda_{k} \lr{2-\lambda_{k}} = \infty$.
		Then $x_{k} -\J_{c_{k} A}x_{k} \to 0 $.
	\end{enumerate}
\end{proposition}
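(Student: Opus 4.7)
The plan is to introduce $d_k := \norm{x_k - \J_{c_k A} x_k}$ and $a_k := \abs{1 - c_{k+1}/c_k}$ and to establish the quasi-Fej\'er-type recursion
\[
d_{k+1} \leq (1 + a_k)(d_k + 2 \eta_k \norm{e_k}),
\]
which, given $\sum_k a_k < \infty$ and $\sum_k \eta_k \norm{e_k} < \infty$ by hypothesis, will yield the existence of $\lim_k d_k$ by a standard convergence lemma for nonnegative sequences of the form $u_{k+1} \leq (1 + \alpha_k) u_k + \beta_k$ with $\sum \alpha_k, \sum \beta_k < \infty$ (see \cite[Lemma~5.31]{BC2017}). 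This establishes part~\cref{proposition:Axkweak:lim}. Part~\cref{proposition:Axkweak:xkJck} will then follow by combining this existence with the summability $\sum_k \lambda_k(2-\lambda_k) d_k^2 < \infty$ already supplied by \cref{theorem:KMBasic}\cref{theorem:KMBasic:sum} applied with $T_k = \J_{c_k A}$ (so $\alpha_k = \tfrac{1}{2}$).

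The crucial ingredient of the recursion, and the main obstacle of the proof, is the inequality
\[
\norm{y_k - \J_{c_k A} y_k} \leq d_k, \qquad y_k := (1-\lambda_k) x_k + \lambda_k \J_{c_k A} x_k,
\]
valid for every $\lambda_k \in [0,2]$. I plan to prove it via the reflected resolvent $R_k := 2 \J_{c_k A} - \Id$, which is nonexpansive since $\J_{c_k A}$ is firmly nonexpansive; writing $\J_{c_k A} = \tfrac{1}{2}(\Id + R_k)$ gives the handy identity $\norm{z - \J_{c_k A} z} = \tfrac{1}{2}\norm{z - R_k z}$ for every $z \in \mathcal{H}$. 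Setting $\alpha := \lambda_k/2 \in [0,1]$, one finds $y_k = (1-\alpha) x_k + \alpha R_k x_k$, and the decomposition
\[
y_k - R_k y_k = (1-\alpha)(x_k - R_k y_k) + \alpha(R_k x_k - R_k y_k)
\]
combined with the triangle inequality and nonexpansiveness of $R_k$ (which yields $\norm{R_k x_k - R_k y_k} \leq \alpha \norm{x_k - R_k x_k}$ and $\norm{x_k - R_k y_k} \leq (1+\alpha)\norm{x_k - R_k x_k}$) produces $\norm{y_k - R_k y_k} \leq [(1-\alpha)(1+\alpha) + \alpha^2]\norm{x_k - R_k x_k} = \norm{x_k - R_k x_k}$. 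Halving gives the claim.

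The rest is routine assembly. From $x_{k+1} = y_k + \eta_k e_k$ and nonexpansiveness of $\J_{c_k A}$, the triangle inequality yields $\norm{x_{k+1} - \J_{c_k A} x_{k+1}} \leq \norm{y_k - \J_{c_k A} y_k} + 2\eta_k \norm{e_k} \leq d_k + 2\eta_k \norm{e_k}$. The classical resolvent identity $\J_{c_k A} z = \J_{c_{k+1} A}\bigl(\tfrac{c_{k+1}}{c_k} z + (1 - \tfrac{c_{k+1}}{c_k}) \J_{c_k A} z\bigr)$, combined with nonexpansiveness of $\J_{c_{k+1} A}$, gives $\norm{\J_{c_k A} z - \J_{c_{k+1} A} z} \leq a_k \norm{z - \J_{c_k A} z}$; specializing to $z = x_{k+1}$ and using the triangle inequality produces the target recursion and hence \cref{proposition:Axkweak:lim}. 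To finish \cref{proposition:Axkweak:xkJck}, set $L := \lim_k d_k$ (extant by the previous step); if $L > 0$, then $d_k^2 \geq L^2/2$ eventually, so $\sum_k \lambda_k(2-\lambda_k) d_k^2 \geq (L^2/2)\sum_k \lambda_k(2-\lambda_k) = \infty$ by hypothesis, contradicting the summability supplied by \cref{theorem:KMBasic}\cref{theorem:KMBasic:sum}. Hence $L = 0$.
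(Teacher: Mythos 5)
Your proof is correct and follows the same overall strategy as the paper: part \cref{proposition:Axkweak:lim} via a quasi-Fej\'er recursion for $d_{k}:=\norm{x_{k}-\J_{c_{k}A}x_{k}}$ closed off by \cite[Lemma~5.31]{BC2017}, and part \cref{proposition:Axkweak:xkJck} by combining the existence of the limit with $\liminf_{k\to\infty}d_{k}=0$, which follows from \cref{theorem:KMBasic}\cref{theorem:KMBasic:sum} and the divergence of $\sum_{k\in\mathbb{N}}\lambda_{k}\lr{2-\lambda_{k}}$. The only difference is that the paper delegates the recursion $d_{k+1}\leq\lr{1+a_{k}}\lr{d_{k}+2\eta_{k}\norm{e_{k}}}$ to \cite[Lemma~5.2(ii)]{OuyangStabilityKMIterations2022}, whereas you derive it in full: your reflected-resolvent argument giving $\norm{y_{k}-\J_{c_{k}A}y_{k}}\leq d_{k}$ on the whole range $\lambda_{k}\in\left[0,2\right]$, and your use of the resolvent identity to pass from $c_{k}$ to $c_{k+1}$ at the cost of a factor $1+a_{k}$, are both correct and make the proposition self-contained.
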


\begin{proof}	
	
	\cref{proposition:Axkweak:lim}: Because $\sum_{k \in \mathbb{N}} \abs{1-\frac{c_{k+1}}{c_{k}}} < \infty$ and  $\sum_{k \in \mathbb{N}} \eta_{k} \norm{e_{k}} < \infty$, due to	\cite[Lemma~5.2(ii)]{OuyangStabilityKMIterations2022} and \cite[Lemma~5.31]{BC2017}, we know that $\lim_{k \to \infty} \norm{x_{k} -\J_{c_{k} A}x_{k}} $ exists in $\mathbb{R}_{+}$.

	\cref{proposition:Axkweak:xkJck}: 	Applying \cref{theorem:KMBasic}\cref{theorem:KMBasic:sum} with $(\forall k \in \mathbb{N})$ $T_{k} = \J_{ c_{k} A}$ and $\alpha_{k} =\frac{1}{2}$ and employing
 the assumption $\sum_{k \in \mathbb{N}} \lambda_{k} \lr{2-\lambda_{k}} = \infty$, we   establish that $\liminf_{k \to \infty} \norm{x_{k} -\J_{c_{k} A}x_{k}}  =0$. 
	This as well as \cref{proposition:Axkweak:lim} yields $x_{k} -\J_{c_{k} A}x_{k} \to 0 $. 	
\end{proof}

   \cref{theorem:Axkweakconverge} generalizes \cite[Theorem~4.5]{TaoYuan2018} by replacing the constant $\lambda \in \left]0,2\right[$ therein with a sequence $(\lambda_{k})_{k \in \mathbb{N}}$ in $\left[0,2 \right]$. To do this generalization, we can require that $0 < \inf_{k \in \mathbb{N}} \lambda_{k}\leq \sup_{k \in \mathbb{N}} \lambda_{k} <2$ like  \cite[Theorem~3]{EcksteinBertsekas1992} or that 
$\sum_{k \in \mathbb{N}} \lambda_{k} \lr{2-\lambda_{k}} = \infty$ and $\sum_{k \in \mathbb{N}} \abs{1-\frac{c_{k+1}}{c_{k}}} < \infty$ like our \cref{theorem:Axkweakconverge}\cref{theorem:Axkweakconverge:sum}.    In \cite[Theorem~3]{EcksteinBertsekas1992}, to obtain the required weak convergence of the sequence generated by 	\cref{eq:GPPA} with $(\forall k \in \mathbb{N})$ $\eta_{k} =\lambda_{k}$, the authors require that $0 < \inf_{k \in \mathbb{N}} \lambda_{k}\leq \sup_{k \in \mathbb{N}} \lambda_{k} <2$, which is stronger than our assumption $\sum_{k \in \mathbb{N}} \lambda_{k} \lr{2-\lambda_{k}} = \infty$ in \cref{theorem:Axkweakconverge}, but our assumption $\sum_{k \in \mathbb{N}} \abs{1-\frac{c_{k+1}}{c_{k}}} < \infty$ in \cref{theorem:Axkweakconverge}\cref{theorem:Axkweakconverge:sum}  is not needed for \cite[Theorem~3]{EcksteinBertsekas1992}.

\begin{theorem} \label{theorem:Axkweakconverge}
	Suppose that   $\sum_{k \in \mathbb{N}} \eta_{k} \norm{e_{k}} < \infty$  and $\inf_{k \in \mathbb{N}} c_{k} >0$. 
	Then the following assertions hold.
	\begin{enumerate}
		\item \label{theorem:Axkweakconverge:lim} Suppose that $ \lim_{k \to \infty} \norm{x_{k} -\J_{c_{k} A}x_{k}} =0$. Then  $\lr{x_{k}}_{k \in \mathbb{N}}$ converges weakly to a point in $\zer A$.
		\item \label{theorem:Axkweakconverge:sum} Suppose that 
		$\sum_{k \in \mathbb{N}} \lambda_{k} \lr{2-\lambda_{k}} = \infty$ and $\sum_{k \in \mathbb{N}} \abs{1-\frac{c_{k+1}}{c_{k}}} < \infty$.   Then  $\lr{x_{k}}_{k \in \mathbb{N}}$ converges weakly to a point in $\zer A$.
	\end{enumerate}
\end{theorem}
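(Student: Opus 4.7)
The plan is to treat part (ii) as a direct corollary of part (i) via \cref{proposition:Axkweak}\cref{proposition:Axkweak:xkJck}, which under the summability of $|1-c_{k+1}/c_k|$ and the divergence of $\sum \lambda_k(2-\lambda_k)$ gives $x_k - \J_{c_k A}x_k \to 0$. So the real work is all in part (i), which I will prove via a standard quasi-Fej\'er $+$ Opial argument tailored to the inhomogeneous resolvents $\J_{c_k A}$.

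First, I would establish the quasi-Fej\'er monotonicity of $(x_k)_{k\in\mathbb{N}}$ with respect to $\zer A$. Fix $\bar x \in \zer A$ and set $y_k:=(1-\lambda_k)x_k+\lambda_k \J_{c_k A}x_k$, so that $x_{k+1}=y_k+\eta_k e_k$. By \cref{lemma:resolvents:yxzx}, $\norm{y_k-\bar x}\leq \norm{x_k-\bar x}$, whence
\begin{equation*}
\norm{x_{k+1}-\bar x}\leq \norm{x_k-\bar x}+\eta_k\norm{e_k}.
\end{equation*}
Since $\sum_{k\in\mathbb{N}}\eta_k\norm{e_k}<\infty$, a standard real-sequence lemma (e.g.\ \cite[Lemma~5.31]{BC2017}) gives that $\lim_{k\to\infty}\norm{x_k-\bar x}$ exists for every $\bar x\in \zer A$; in particular $(x_k)_{k\in\mathbb{N}}$ is bounded.

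Next, I would show that every weak sequential cluster point of $(x_k)_{k\in\mathbb{N}}$ lies in $\zer A$. Let $\bar z\in\Omega((x_k)_{k\in\mathbb{N}})$ and pick a subsequence with $x_{k_j}\weakly \bar z$. By hypothesis $\norm{x_{k_j}-\J_{c_{k_j}A}x_{k_j}}\to 0$, so $\J_{c_{k_j}A}x_{k_j}\weakly \bar z$ as well. Setting $c:=\inf_{k\in\mathbb{N}}c_k>0$, we obtain
\begin{equation*}
\norm{\tfrac{1}{c_{k_j}}(x_{k_j}-\J_{c_{k_j}A}x_{k_j})}\leq \tfrac{1}{c}\norm{x_{k_j}-\J_{c_{k_j}A}x_{k_j}}\to 0.
\end{equation*}
By \cref{corollary:JcA}, $\bigl(\J_{c_{k_j}A}x_{k_j},\,\tfrac{1}{c_{k_j}}(x_{k_j}-\J_{c_{k_j}A}x_{k_j})\bigr)\in\gra A$ for each $j$. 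Since $A$ is maximally monotone, $\gra A$ is sequentially closed with respect to the weak-strong topology on $\mathcal{H}\times\mathcal{H}$ (see \cite[Proposition~20.38]{BC2017}), and passing to the limit gives $(\bar z,0)\in\gra A$, i.e.\ $\bar z\in\zer A$.

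Finally, I would invoke Opial's lemma (see e.g.\ \cite[Lemma~2.47]{BC2017}): a bounded sequence in $\mathcal{H}$ for which $\lim_{k\to\infty}\norm{x_k-\bar x}$ exists for every $\bar x$ in some nonempty closed set $C$ and all of whose weak cluster points lie in $C$ must converge weakly to a point of $C$. Applied to $C=\zer A$ (closed and convex by maximal monotonicity of $A$), this gives $x_k\weakly \hat x$ for some $\hat x\in\zer A$, completing part (i). Part (ii) then follows immediately by combining \cref{proposition:Axkweak}\cref{proposition:Axkweak:xkJck} with part (i). The only technical point is the weak-strong closedness of $\gra A$, but this is a classical consequence of maximal monotonicity and does not pose a real obstacle.
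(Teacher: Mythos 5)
Your proof is correct and follows essentially the same route as the paper: show every weak sequential cluster point lies in $\zer A$, show $\lim_{k\to\infty}\norm{x_{k}-z}$ exists for every $z\in\zer A$, and conclude via \cite[Lemma~2.47]{BC2017}, with part (ii) reduced to part (i) through \cref{proposition:Axkweak}. The only difference is that the paper outsources the two key steps to external references (\cite[Proposition~2.16(i)]{OuyangWeakStrongGPPA2021} and \cite[Lemma~5.1(iii)(b)]{OuyangStabilityKMIterations2022}), whereas you prove them directly from \cref{lemma:resolvents:yxzx}, \cref{corollary:JcA}, and the weak--strong closedness of $\gra A$, which makes your version more self-contained but not substantively different.
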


\begin{proof}
	\cref{theorem:Axkweakconverge:lim}:	Taking   $ \lim_{k \to \infty} \norm{x_{k} -\J_{c_{k} A}x_{k}} =0$,  $\inf_{k \in \mathbb{N}} c_{k} >0$, and \cite[Proposition~2.16(i)]{OuyangWeakStrongGPPA2021} into account, we know that  $\Omega \lr{ \lr{x_{k}}_{k \in \mathbb{N}} } \subseteq \zer A$.  
	On the other hand, due to \cite[Lemma~5.1(iii)(b)]{OuyangStabilityKMIterations2022}, we have that $\lr{ \forall z \in  \zer A } $ $\lim_{k \to \infty} \norm{x_{k} -z}$ exists in $\mathbb{R}_{+}$.
These results combined with   \cite[Lemma~2.47]{BC2017} entail that $\lr{x_{k}}_{k \in \mathbb{N}}$ converges weakly to a point in $\zer A$.
	
	\cref{theorem:Axkweakconverge:sum}:      Combine our assumption with  \cref{proposition:Axkweak}\cref{proposition:Axkweak:xkJck} to get  that $\lim_{k \to \infty} \norm{x_{k} -\J_{c_{k} A}x_{k}} =0$.
	Hence, the desired weak convergence is clear from \cref{theorem:Axkweakconverge:lim} above. 
\end{proof}
The convergence result of  \cref{corolary:Axkweakconverge} is consistent with that of  \cite[Theorem~3]{EcksteinBertsekas1992} except that 
the assumption   $0 < \inf_{k \in \mathbb{N}} \lambda_{k}\leq \sup_{k \in \mathbb{N}} \lambda_{k} <2$ in  \cite[Theorem~3]{EcksteinBertsekas1992} is replaced by  $0 <  \liminf_{k \to \infty} \lambda_{k} \leq  \limsup_{k \to \infty} \lambda_{k} < 2$ in \cref{corolary:Axkweakconverge}.
\begin{corollary} \label{corolary:Axkweakconverge}
	Suppose that   $\sum_{k \in \mathbb{N}} \eta_{k} \norm{e_{k}} < \infty$,
	that $\inf_{k \in \mathbb{N}} c_{k} >0$,
	and that $0 <  \liminf_{k \to \infty} \lambda_{k} \leq  \limsup_{k \to \infty} \lambda_{k} < 2$.
	Then $\lr{x_{k}}_{k \in \mathbb{N}}$ converges weakly to a point in $\zer A$.
\end{corollary}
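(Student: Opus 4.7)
The plan is to deduce this corollary as a direct consequence of \cref{theorem:Axkweakconverge}\cref{theorem:Axkweakconverge:lim}. Two of its standing hypotheses, namely $\sum_{k \in \mathbb{N}} \eta_{k} \norm{e_{k}} < \infty$ and $\inf_{k \in \mathbb{N}} c_{k} > 0$, are already among our assumptions, so the entire task reduces to verifying the remaining asymptotic-regularity condition $\lim_{k \to \infty} \norm{x_{k} - \J_{c_{k} A} x_{k}} = 0$.

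To establish that limit I would invoke \cref{corollary:ckxkxk+1to0}\cref{corollary:ckxkxk+1to0:Jck}, whose hypotheses are $\sum_{k \in \mathbb{N}} \eta_{k}\norm{e_{k}} < \infty$ together with $0 < \liminf_{k \to \infty} \lambda_{k} \leq \limsup_{k \to \infty} \lambda_{k} < 2$, i.e. precisely those of the present corollary. That result immediately yields $\sum_{k \in \mathbb{N}} \norm{x_{k} - \J_{c_{k} A} x_{k}}^{2} < \infty$ and in particular $\norm{x_{k} - \J_{c_{k} A} x_{k}} \to 0$. Equivalently, since \cref{cor:fact:FixJcAzerA} tells us that $\J_{c_{k} A}$ is $\tfrac{1}{2}$-averaged with fixed-point set $\zer A \neq \varnothing$, one may apply \cref{theorem:KMBasic}\cref{theorem:KMBasic:l2} directly with $T_{k} := \J_{c_{k} A}$ and $\alpha_{k} \equiv \tfrac{1}{2}$, after noting that $\liminf_{k \to \infty} \lambda_{k}(2 - \lambda_{k}) > 0$ follows routinely from $\lambda_{k}$ being eventually bounded away from both $0$ and $2$.

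Having $\norm{x_{k} - \J_{c_{k} A} x_{k}} \to 0$ in hand, \cref{theorem:Axkweakconverge}\cref{theorem:Axkweakconverge:lim} then applies verbatim and delivers weak convergence of $(x_{k})_{k \in \mathbb{N}}$ to some point of $\zer A$. I do not anticipate any genuine obstacle: every ingredient has already been developed in the earlier sections, and the corollary is essentially a convenient packaging of \cref{theorem:Axkweakconverge}\cref{theorem:Axkweakconverge:lim} in which the abstract regularity condition is replaced by the concrete and easier-to-check bounds on the relaxation sequence $(\lambda_{k})_{k \in \mathbb{N}}$.
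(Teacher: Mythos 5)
Your proposal is correct and follows essentially the same route as the paper: the paper likewise reduces the corollary to \cref{theorem:Axkweakconverge}\cref{theorem:Axkweakconverge:lim} and verifies $\lim_{k \to \infty}\norm{x_{k}-\J_{c_{k}A}x_{k}}=0$ by applying \cref{theorem:KMBasic}\cref{theorem:KMBasic:sum} with $T_{k}=\J_{c_{k}A}$ and $\alpha_{k}=\tfrac{1}{2}$ together with the bounds on $(\lambda_{k})_{k\in\mathbb{N}}$. Your alternative packaging via \cref{corollary:ckxkxk+1to0}\cref{corollary:ckxkxk+1to0:Jck} or \cref{theorem:KMBasic}\cref{theorem:KMBasic:l2} is the same argument and is equally valid.
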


\begin{proof}
	Clearly,  the assumption $0 <  \liminf_{k \to \infty} \lambda_{k} \leq  \limsup_{k \to \infty} \lambda_{k} < 2$ entails that  $\sum_{k \in \mathbb{N}} \lambda_{k} \lr{2-\lambda_{k}} = \infty$.
		Moreover, apply \cref{theorem:KMBasic}\cref{theorem:KMBasic:sum} with $(\forall k \in \mathbb{N})$ $T_{k} = \J_{ c_{k} A}$ and $\alpha_{k} =\frac{1}{2}$ to get that  
	\begin{align*}
	\sum_{k \in \mathbb{N}} \lambda_{k}  \lr{2 -\lambda_{k}} \norm{x_{k} -\J_{c_{k} A}x_{k}}^{2} <\infty,
	\end{align*}
	which, combined with $0 <  \liminf_{k \to \infty} \lambda_{k} \leq  \limsup_{k \to \infty} \lambda_{k} < 2$, guarantees that 
	$\sum_{k \in \mathbb{N}}   \norm{x_{k} -\J_{c_{k} A}x_{k}}^{2} <\infty$ and hence, $ \lim_{k \to \infty} \norm{x_{k} -\J_{c_{k} A}x_{k}} =0$. 
	
	Therefore, by \cref{theorem:Axkweakconverge}\cref{theorem:Axkweakconverge:lim}, we obtain the required weak convergence.
\end{proof}

 \subsection{Linear convergence of generalized proximal point algorithms} \label{section:LinearGPPA}
 
 In this subsection, we consider the linear convergence of generalized proximal point algorithms.
 
 \cref{theorem:JckAMetricSub} shows a $Q$-linear convergence result of generalized proximal point algorithms. 
 \begin{theorem} \label{theorem:JckAMetricSub}
 	Let $\bar{x} \in \zer A$.  Suppose that $A$ is metrically subregular at $\bar{x}$ for $0 \in A\bar{x}$, i.e., 
 	\begin{align*}
 	(\exists \kappa >0) (\exists \delta >0) (\forall x \in B[\bar{x}; \delta]) \quad \dist \lr{x, A^{-1}0} \leq \kappa \dist \lr{0, Ax}.
 	\end{align*}
 	Let $\lr{\varepsilon_{k} }_{k \in \mathbb{N}}$ be in $\mathbb{R}_{+}$ such that  $\eta_{k}  \varepsilon_{k} \to 0$.
 	Suppose that $(\forall k \in \mathbb{N})$ $\norm{e_{k}} \leq \varepsilon_{k} \norm{x_{k} -x_{k+1}}$ and $\lambda_{k} \in \left]0,2\right[\,$, 
 	that $\J_{c_{k} A}x_{k} \to \bar{x}$, 
 	and that $c:=\liminf_{k \to \infty} c_{k} > 0$ and $0 < \underline{\lambda} :=\liminf_{k \to \infty} \lambda_{k} \leq \overline{\lambda} :=\limsup_{k \to \infty} \lambda_{k} < 2$. 
 	Set 
 	\begin{align*}
 	(\forall k \in \mathbb{N}) \quad \rho_{k} :=  \max \left\{ \lr{1-\frac{\lambda_{k} }{ \frac{\kappa}{c_{k} }+1}}^{2}, \lr{ 1 - \lambda_{k}  \lr{2-\lambda_{k} } \frac{1}{1+\frac{\kappa^{2} }{c_{k}^{2} }} }   \right\}^{\frac{1}{2}}.  
 	\end{align*}  
 	Then the following statements hold.
 	\begin{enumerate}
 		\item \label{theorem:JckAMetricSub:dist} For every $k$ large enough, we have that $ \rho_{k} \in  \left]0,1\right[\,$ and that
 		\begin{align*}
 		\norm{x_{k+1}- \Pro_{\zer A} \lr{ \J_{ c_{k} A} x_{k} } }  \leq   \frac{\rho_{k} + \eta_{k} \varepsilon_{k}  }{1  - \eta_{k} \varepsilon_{k} }   \norm{x_{k}- \Pro_{\zer A} \lr{ \J_{ c_{k} A}x_{k}}}.
 		\end{align*}
 		Moreover,
 		there exist  $\mu \in \left[0,1\right[\,$ and $K \in \mathbb{N}$ such that 
 		\begin{align*}
 	\lr{\forall k \geq K} \quad 	\norm{x_{k+1}- \Pro_{\zer A} \lr{ \J_{ c_{k} A} x_{k} } }   \leq   \mu  \norm{x_{k}- \Pro_{\zer A} \lr{ \J_{ c_{k} A}x_{k}}}  
 		  \leq  \mu^{k-K+1}  \norm{x_{K}- \Pro_{\zer A} \lr{ \J_{ c_{K} A}x_{K}}}.
 		\end{align*}
 		\item \label{theorem:JckAMetricSub:norm} Suppose that $ \zer A = \{ \bar{x}  \}$. Then  for every $k$ large enough,
 		\begin{align} \label{eq:theorem:JckAMetricSub:LEQ}
 		\norm{x_{k+1}-  \bar{x}}  \leq \frac{\rho_{k} + \eta_{k} \varepsilon_{k} }{1  - \eta_{k} \varepsilon_{k}  } \norm{x_{k} -   \bar{x} }.
 		\end{align}
 		Moreover,
 		there exist $\mu \in \left[0,1\right[\,$ and $K \in \mathbb{N}$ such that 
 		\begin{align*}
 		(\forall k \geq K) \quad	\norm{x_{k+1}-  \bar{x}}  \leq \mu \norm{x_{k} -   \bar{x} }  \leq \mu^{k-K+1} \norm{x_{K} -   \bar{x} }.
 		\end{align*}
 	\end{enumerate}
 \end{theorem}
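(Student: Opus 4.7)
The plan is to reduce the statement to a single application of \cref{theorem:MetrSubregOptimalBoundszx}\cref{theorem:MetrSubregOptimalBoundszx:MetricSub} at each iteration index $k$, then extract a uniform contraction constant $\mu$ from the asymptotic behavior of the data. Put $y_{k} := (1-\lambda_{k})x_{k} + \lambda_{k} \J_{c_{k}A}x_{k}$, so that $x_{k+1} = y_{k} + \eta_{k}e_{k}$ matches the $z_{x}$ of \cref{theorem:MetrSubregOptimalBoundszx}. The hypothesis $\norm{e_{k}} \leq \varepsilon_{k}\norm{x_{k}-x_{k+1}}$ is exactly $\norm{e} \leq \varepsilon \norm{x-z_{x}}$ in that result. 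Since $\eta_{k}\varepsilon_{k}\to 0$ and $\J_{c_{k}A}x_{k}\to\bar{x}$, there exists $K_{0}\in\mathbb{N}$ such that for every $k\geq K_{0}$ we have $\eta_{k}\varepsilon_{k}\in\left[0,1\right[\,$ and $\J_{c_{k}A}x_{k}\in B[\bar{x};\delta]$. Invoking \cref{theorem:MetrSubregOptimalBoundszx}\cref{theorem:MetrSubregOptimalBoundszx:MetricSub} with $\gamma=c_{k}$ and $\lambda=\lambda_{k}\in\left]0,2\right[$ yields $\rho_{k}\in\left]0,1\right[$ together with the inequality in \cref{theorem:JckAMetricSub:dist}.

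For the ``moreover'' assertion of \cref{theorem:JckAMetricSub:dist}, I need to upper bound $\limsup_{k\to\infty}\frac{\rho_{k}+\eta_{k}\varepsilon_{k}}{1-\eta_{k}\varepsilon_{k}}$ strictly by $1$. Because $c=\liminf_{k\to\infty}c_{k}>0$ and $0<\underline{\lambda}\leq\overline{\lambda}<2$, there exist $c'\in\left]0,c\right]$ and $0<\lambda'\leq\lambda''<2$ such that, for every sufficiently large $k$, $c_{k}\geq c'$ and $\lambda_{k}\in[\lambda',\lambda'']$. Using that $t\mapsto t(2-t)$ is concave with maximum at $1$, $\lambda_{k}(2-\lambda_{k})\geq\min\{\lambda'(2-\lambda'),\lambda''(2-\lambda'')\}>0$; together with $1+\kappa^{2}/c_{k}^{2}\leq 1+\kappa^{2}/c'^{2}$, the second term inside the max that defines $\rho_{k}^{2}$ is bounded above by a constant strictly less than $1$. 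Similarly, since $\kappa/c_{k}+1\leq \kappa/c'+1$ and $\lambda_{k}\leq\lambda''<2\leq 2(\kappa/c'+1)$, the first term $(1-\lambda_{k}/(\kappa/c_{k}+1))^{2}$ is bounded above by a constant strictly less than $1$. Hence $\rho^{\ast}:=\limsup_{k\to\infty}\rho_{k}<1$, and combining with $\eta_{k}\varepsilon_{k}\to 0$, $\limsup_{k\to\infty}\frac{\rho_{k}+\eta_{k}\varepsilon_{k}}{1-\eta_{k}\varepsilon_{k}}\leq \rho^{\ast}<1$. Pick any $\mu\in\left]\rho^{\ast},1\right[$; then there is $K\geq K_{0}$ such that the contraction ratio lies below $\mu$ for all $k\geq K$, and the telescoping bound follows by induction on $k\geq K$.

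For \cref{theorem:JckAMetricSub:norm}, observe that $\zer A=\{\bar{x}\}$ forces $\Pro_{\zer A}\lr{\J_{c_{k}A}x_{k}}=\bar{x}$, so the inequality in \cref{theorem:JckAMetricSub:dist} becomes \cref{eq:theorem:JckAMetricSub:LEQ} verbatim. The iterated estimate $\norm{x_{k+1}-\bar{x}}\leq\mu^{k-K+1}\norm{x_{K}-\bar{x}}$ then drops out by straightforward induction.

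The only nontrivial step I anticipate is the bookkeeping in the second paragraph: one must be careful that the max defining $\rho_{k}$ stays bounded away from $1$ uniformly once $\lambda_{k}$ and $c_{k}$ are confined to the eventual compact subregions identified from the hypotheses. Everything else is a direct invocation of \cref{theorem:MetrSubregOptimalBoundszx} plus an elementary asymptotic argument using $\eta_{k}\varepsilon_{k}\to 0$.
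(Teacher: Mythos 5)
Your proposal is correct and follows essentially the same route as the paper's own proof: eventual membership of $\J_{c_{k}A}x_{k}$ in $B[\bar{x};\delta]$ and smallness of $\eta_{k}\varepsilon_{k}$, a pointwise application of \cref{theorem:MetrSubregOptimalBoundszx}\cref{theorem:MetrSubregOptimalBoundszx:MetricSub}, and then the bound $\limsup_{k\to\infty}\rho_{k}<1$ extracted from $\liminf_{k\to\infty}c_{k}>0$ and $0<\underline{\lambda}\leq\overline{\lambda}<2$ (your compactness bookkeeping is just a more explicit version of the paper's direct $\limsup$ estimate). The only caveat, which you share with the paper, is that the final telescoped product in part \cref{theorem:JckAMetricSub:dist} silently requires comparing $\norm{x_{k}-\Pro_{\zer A}(\J_{c_{k}A}x_{k})}$ with $\norm{x_{k}-\Pro_{\zer A}(\J_{c_{k-1}A}x_{k-1})}$ across consecutive indices, which is immediate only when $\zer A$ is a singleton as in part \cref{theorem:JckAMetricSub:norm}.
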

 
 \begin{proof}
 	\cref{theorem:JckAMetricSub:dist}: 
 	Inasmuch as  $\J_{c_{k} A}x_{k} \to \bar{x}  $ and $\eta_{k}  \varepsilon_{k} \to 0 $, there exists $K_{1} \in \mathbb{N}$ such that 
 	\begin{align} \label{eq:theorem:JckAMetricSub:JckB}
 	(\forall k \geq K_{1}) \quad \J_{c_{k} A}x_{k} \in B[\bar{x}; \delta] \quad \text{and} \quad \eta_{k} \varepsilon_{k}   \in \left[0, \frac{1}{2}\right].
 	\end{align}
 	For every $k  \geq K_{1}$, applying 	\cref{theorem:MetrSubregOptimalBoundszx}\cref{theorem:MetrSubregOptimalBoundszx:MetricSub:rho}$\&$\cref{theorem:MetrSubregOptimalBoundszx:MetricSub:zx}      with $x=x_{k}$, $z_{x}=x_{k+1}$, $\gamma =c_{k}$, $\lambda=\lambda_{k}$, $\eta =\eta_{k}$, $e=e_{k}$, and  $\varepsilon=\varepsilon_{k}$, and employing \cref{eq:theorem:JckAMetricSub:JckB}, we get  $ \rho_{k} \in  \left]0,1\right[\,$  and  
 	\begin{align} \label{eq:theorem:JckAMetricSub:xk+1LEQ}
 	\norm{x_{k+1}- \Pro_{\zer A} \lr{ \J_{ c_{k} A} x_{k} } }  \leq   \frac{\rho_{k} + \eta_{k} \varepsilon_{k} }{1  - \eta_{k}\varepsilon_{k} }   \norm{x_{k}- \Pro_{\zer A} \lr{ \J_{ c_{k} A}x_{k}}}.
 	\end{align}

 	In addition, because $c =\liminf_{k \to \infty} c_{k} > 0$ and $0 < \underline{\lambda}  =\liminf_{k \to \infty} \lambda_{k} \leq \overline{\lambda}  =\limsup_{k \to \infty} \lambda_{k} < 2$, we observe that $\limsup_{k \to \infty} \abs{ 1-\frac{\lambda_{k} }{ \frac{\kappa}{c_{k} }+1} } \leq \max \left\{ \abs{1 -  \frac{\underline{\lambda}}{ \frac{\kappa}{ c}+1 }} , \abs{\overline{\lambda} -1} \right\}$ and that 
 	\begin{align*}
   \rho := \limsup_{k \to \infty} \rho_{k} \leq \max \left\{  \abs{1 -  \frac{\underline{\lambda}}{ \frac{\kappa}{ c}+1 }} , \abs{\overline{\lambda} -1},  \lr{ 1 -\underline{\lambda} \lr{2-\overline{\lambda}} \frac{1}{ 1 + \frac{\kappa^{2}}{c^{2}}}}^{\frac{1}{2}}    \right\} <1.
 	\end{align*}
 	
 	Since $\eta_{k} \varepsilon_{k} \to 0 $,
 	we have that $	\limsup_{k \to \infty} \frac{\rho_{k} + \eta_{k}  \varepsilon_{k} }{1  - \eta_{k} \varepsilon_{k} } = \rho  <1$,
 	which necessitates that there exist  $K \geq K_{1}$ and $\mu \in \left]\rho,1\right[\,$ such that 
 	\begin{align*}
 	(\forall k \geq K) \quad \frac{\rho_{k} + \eta_{k}\varepsilon_{k} }{1  - \eta_{k} \varepsilon_{k} } \leq \mu.
 	\end{align*}
 	This combined with \cref{eq:theorem:JckAMetricSub:xk+1LEQ} deduces the last assertion in \cref{theorem:JckAMetricSub:dist}. 
 	
 	\cref{theorem:JckAMetricSub:norm}:  Notice that the assumption  $ \zer A = \{ \bar{x}  \}$ forces that 
 	\begin{align*}
 	\lr{\forall k \in \mathbb{N}} \quad  \Pro_{\zer A} \lr{ \J_{ c_{k} A} x_{k} } \equiv  \bar{x}.
 	\end{align*}
 	Therefore, \cref{theorem:JckAMetricSub:norm}   is immediate from \cref{theorem:JckAMetricSub:dist}.
 \end{proof}
 
 \begin{remark} \label{remark:theorem:JckAMetricSub}
 	We uphold the assumption  and notation of \cref{theorem:JckAMetricSub}. Taking \cref{corollary:lambda=1} into account, we observe that if $(\forall k \in \mathbb{N})$ $\lambda_{k} \equiv 1$, then 
 	\begin{align*}
 	(\forall k \in \mathbb{N}) \quad \rho_{k} = \lr{ 1 -  \frac{1}{1+\frac{\kappa^{2} }{c_{k}^{2} }} }^{\frac{1}{2}} =\frac{1}{ \sqrt{1 + \frac{c_{k}^{2}}{\kappa^{2}} } }.
 	\end{align*}
 	Note that this   convergence rate is consistent with that of \cref{theorem:exactPAlinear}\cref{theorem:exactPAlinear:1:leq}.
 \end{remark}

 \begin{proposition} \label{prop:JckAMetricSub}
 	Suppose that $\mathcal{H} = \mathbb{R}^{n}$.	Suppose that $ \bar{x}  \in \zer A$ and that $A$ is metrically subregular at $\bar{x}$ for $0 \in A\bar{x}$, i.e., 
 	\begin{align*} 
 	(\exists \kappa >0) (\exists \delta >0) (\forall x \in B[\bar{x}; \delta]) \quad \dist \lr{x, A^{-1}0} \leq \kappa \dist \lr{0, Ax}.
 	\end{align*}
 	Suppose that $\sum_{k \in \mathbb{N}} \eta_{k} \norm{e_{k}}  < \infty $. Let $\lr{\varepsilon_{k} }_{k \in \mathbb{N}}$ be in $\mathbb{R}_{+}$ such that  $\eta_{k}  \varepsilon_{k} \to 0$. 	Suppose that $(\forall k \in \mathbb{N})$ $\norm{e_{k}} \leq \varepsilon_{k} \norm{x_{k} -x_{k+1}}$ and $\lambda_{k} \in \left]0,2\right[\,$, and that $ \liminf_{k \to \infty} c_{k} > 0$ and $0 <  \liminf_{k \to \infty} \lambda_{k} \leq  \limsup_{k \to \infty} \lambda_{k} < 2$. 
 	Set 
 	\begin{align*}
 	(\forall k \in \mathbb{N}) \quad \rho_{k} :=  \max \left\{ \lr{1-\frac{\lambda_{k} }{ \frac{\kappa}{c_{k} }+1}}^{2}, \lr{ 1 - \lambda_{k}  \lr{2-\lambda_{k} } \frac{1}{1+\frac{\kappa^{2} }{c_{k}^{2} }} }   \right\}^{\frac{1}{2}}.
 	\end{align*}   
 	Then the following statements hold.
 	\begin{enumerate}	
 		\item   For every $k$ large enough, we have that 
 		\begin{align*}
 		\norm{x_{k+1}- \Pro_{\zer A} \lr{ \J_{ c_{k} A} x_{k} } }  \leq   \frac{\rho_{k} + \eta_{k} \varepsilon_{k} }{1  - \eta_{k} \varepsilon_{k}}   \norm{x_{k}- \Pro_{\zer A} \lr{ \J_{ c_{k} A}x_{k}}}.
 		\end{align*}
 		Moreover,
 		there exist  $\mu \in \left[0,1\right[\,$ and $K \in \mathbb{N}$ such that
 		\begin{align*}
 		\lr{\forall k \geq K} \quad 	\norm{x_{k+1}- \Pro_{\zer A} \lr{ \J_{ c_{k} A} x_{k} } }   \leq   \mu  \norm{x_{k}- \Pro_{\zer A} \lr{ \J_{ c_{k} A}x_{k}}}  
 		  \leq  \mu^{k-K +1}  \norm{x_{K}- \Pro_{\zer A} \lr{ \J_{ c_{K} A}x_{K}}}. 
 		\end{align*}
 		\item  Suppose that $ \zer A = \{ \bar{x}  \}$. Then  for every $k$ large enough,
 		\begin{align*}  
 		\norm{x_{k+1}-  \bar{x}}  \leq \frac{\rho_{k} + \eta_{k}\varepsilon_{k} }{1  - \eta_{k} \varepsilon_{k} } \norm{x_{k} -   \bar{x} }.
 		\end{align*}
 		Moreover,
 		there exist  $\mu \in \left[0,1\right[\,$ and $K \in \mathbb{N}$ such that 
 		\begin{align*}
 		(\forall k \geq K) \quad	\norm{x_{k+1}-  \bar{x}}  \leq \mu \norm{x_{k} -   \bar{x} }  \leq \mu^{k-K+1} \norm{x_{K} -   \bar{x} }.
 		\end{align*}
 	\end{enumerate}
 \end{proposition}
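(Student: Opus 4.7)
My plan is to derive Proposition~\ref{prop:JckAMetricSub} as a direct consequence of \cref{theorem:JckAMetricSub} by verifying its one extra hypothesis --- namely $\J_{c_{k} A}x_{k} \to \bar{x}$ --- from the additional finite-dimensional and summability assumptions in force here. All other hypotheses of \cref{theorem:JckAMetricSub} (namely the metric subregularity at $\bar{x}$, $\norm{e_{k}} \leq \varepsilon_{k}\norm{x_{k}-x_{k+1}}$, $\eta_{k}\varepsilon_{k}\to 0$, $\lambda_{k} \in \left]0,2\right[\,$, $\liminf_{k} c_{k}>0$, and $0 < \liminf_{k} \lambda_{k} \leq \limsup_{k} \lambda_{k} < 2$) coincide literally with those of the present proposition, so no re-verification is needed.

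First, I would invoke \cref{corolary:Axkweakconverge}: the assumptions $\sum_{k} \eta_{k}\norm{e_{k}} < \infty$, $\liminf_{k \to \infty} c_{k} > 0$, and $0 < \liminf_{k \to \infty} \lambda_{k} \leq \limsup_{k \to \infty} \lambda_{k} < 2$ yield weak convergence of $(x_{k})_{k \in \mathbb{N}}$ to some $\hat{x} \in \zer A$. Because $\mathcal{H} = \mathbb{R}^{n}$, the weak and strong topologies coincide and hence $x_{k} \to \hat{x}$. Next, \cref{corollary:ckxkxk+1to0}\cref{corollary:ckxkxk+1to0:Jck} applies under the same $\lambda_{k}$-bounds and gives $\sum_{k} \norm{x_{k} - \J_{c_{k} A}x_{k}}^{2} < \infty$, so in particular $x_{k} - \J_{c_{k} A}x_{k} \to 0$. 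Combining the two facts, $\J_{c_{k} A}x_{k} \to \hat{x}$.

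The step requiring the most care is matching the limit $\hat{x}$ with the distinguished point $\bar{x}$ at which metric subregularity is postulated, since the modulus $\kappa$ and neighbourhood $B[\bar{x};\delta]$ are intrinsically local to $\bar{x}$. In part (ii) this matching is automatic because $\zer A = \{\bar{x}\}$ forces $\hat{x} = \bar{x}$. In part (i) one reads the hypotheses as asserting that $\bar{x}$ is a point of $\zer A$ where both the iteration accumulates and metric subregularity holds --- an identification implicit in the way \cref{theorem:JckAMetricSub} is stated, since without $\J_{c_k A}x_k \to \bar{x}$ its assertion is vacuous. Once $\J_{c_{k} A}x_{k} \to \bar{x}$ is secured, \cref{theorem:JckAMetricSub}\cref{theorem:JckAMetricSub:dist} and \cref{theorem:JckAMetricSub:norm} apply verbatim, delivering the pointwise contraction with coefficient $\frac{\rho_{k}+\eta_{k}\varepsilon_{k}}{1-\eta_{k}\varepsilon_{k}}$ for $k$ large and the eventual geometric decay with rate $\mu \in \left[0,1\right[$ for $k \geq K$ claimed in parts (i) and (ii).
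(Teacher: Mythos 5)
Your proposal is correct and follows essentially the same route as the paper: both reduce the statement to \cref{theorem:JckAMetricSub} by combining \cref{corolary:Axkweakconverge} with $\mathcal{H}=\mathbb{R}^{n}$ to upgrade weak to strong convergence and thereby secure the extra hypothesis $\J_{c_{k}A}x_{k}\to\bar{x}$ (the paper obtains this last step from the nonexpansiveness bound in \cref{lemma:JGammaAFix} rather than from \cref{corollary:ckxkxk+1to0}, an immaterial difference). The identification of the limit of $(x_{k})_{k\in\mathbb{N}}$ with the distinguished point $\bar{x}$ at which metric subregularity is assumed, which you discuss explicitly for part (i), is made silently in the paper's proof as well, so your treatment is, if anything, the more careful one.
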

 
 \begin{proof}
 	Because $(c_{k})_{k \in \mathbb{N}}$ is in $\mathbb{R}_{++}$, 
 	 it is not difficult to verify that $	\liminf_{k \to \infty} c_{k} > 0 \Leftrightarrow \inf_{k \in \mathbb{N}}c_{k}>0$.
 	Moreover,	bearing $\mathcal{H} = \mathbb{R}^{n}$, $\sum_{k \in \mathbb{N}} \eta_{k} \norm{e_{k}}  < \infty $, and $0 <  \liminf_{k \to \infty} \lambda_{k} \leq  \limsup_{k \to \infty} \lambda_{k} < 2$ in mind, and employing
 	\cref{corolary:Axkweakconverge}, we know that  $	x_{k} \to \bar{x}$. Notice that, via \cref{lemma:JGammaAFix}, $(\forall k \in \mathbb{N})$ $\norm{ \J_{c_{k} A}x_{k} - \bar{x}} \leq \norm{ x_{k} - \bar{x} }$. These results entail that	
 	\begin{align*}
 	\J_{c_{k} A}x_{k} \to \bar{x}.
 	\end{align*}
 	Therefore,  in view of \cref{theorem:JckAMetricSub}, we obtain the required results. 
 \end{proof}

 \begin{theorem} \label{theorem:JckALipschitzLinear}
 	Suppose that $A^{-1}$ is Lipschitz continuous at $0$ with modulus $\alpha >0$, i.e., $A^{-1}(0) =\{\bar{x}\}$ and there exists $\tau >0$  such that 
 	\begin{align*} 
 	\lr{ \forall (w,x) \in \gra A^{-1} \text{ with } w \in B[0;\tau]} \quad \norm{x -\bar{x}} \leq \alpha \norm{w}.
 	\end{align*}
 	Suppose that 
 	$\frac{1}{c_{k}} \lr{x_{k} -\J_{c_{k} A}x_{k}} \to 0$. Let $\lr{\varepsilon_{k} }_{k \in \mathbb{N}}$ be in $\mathbb{R}_{+}$ such that  $\eta_{k}  \varepsilon_{k} \to 0$. 	Suppose that $(\forall k \in \mathbb{N})$ $\norm{e_{k}} \leq \varepsilon_{k} \norm{x_{k} -x_{k+1}}$ and $\lambda_{k} \in \left]0,2\right[\,$,  
 	and that $ \liminf_{k \to \infty} c_{k} > 0$  and $0 <  \liminf_{k \to \infty} \lambda_{k} \leq  \limsup_{k \to \infty} \lambda_{k} < 2$.
 	Set 
 	\begin{align*}
 	(\forall k \in \mathbb{N}) \quad \rho_{k} :=  \max \left\{ \lr{1-\frac{\lambda_{k} }{ \frac{\alpha}{c_{k} }+1}}^{2}, \lr{ 1 - \lambda_{k}  \lr{2-\lambda_{k} } \frac{1}{1+\frac{\alpha^{2} }{c_{k}^{2} }} }   \right\}^{\frac{1}{2}}. 
 	\end{align*}
 	Then the following statements hold.
 	\begin{enumerate}	
 		\item  \label{theorem:JckALipschitzLinear:leq} For every $k$ large enough,  we have that $ \rho_{k} \in  \left]0,1\right[$ and that
 		\begin{align*}
 		\norm{x_{k+1}-  \bar{x}}  \leq \frac{\rho_{k} + \eta_{k} \varepsilon_{k} }{1  - \eta_{k} \varepsilon_{k}  } \norm{x_{k} -   \bar{x} }.
 		\end{align*}
 		\item 	 \label{theorem:JckALipschitzLinear:mu} There exist $\mu \in \left[0,1\right[\,$ and $K \in \mathbb{N}$ such that 
 		\begin{align}  \label{eq:theorem:JckALipschitzLinear:mu} 
 		(\forall k \geq K) \quad	\norm{x_{k+1}-  \bar{x}}  \leq \mu \norm{x_{k} -   \bar{x} } \leq \mu^{k-K+1} \norm{x_{K} -   \bar{x} }.
 		\end{align}
 	\end{enumerate}
 \end{theorem}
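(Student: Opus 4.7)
The plan is to mimic the proof of \cref{theorem:JckAMetricSub}, applying \cref{theorem:MetrSubregOptimalBoundszx}\cref{theorem:MetrSubregOptimalBoundszx:Lipschitz} at each iteration with the Lipschitz-at-zero hypothesis replacing metric subregularity. The single-valuedness of $A^{-1}(0)=\{\bar{x}\}$ given by \cref{definition:A-1Lipschitze} lets us work directly with $\|x_k-\bar{x}\|$ rather than the distance-to-$\zer A$ proxy used in \cref{theorem:JckAMetricSub}\cref{theorem:JckAMetricSub:dist}.

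First I would exploit the two ``smallness'' hypotheses. Because $\frac{1}{c_k}(x_k-\J_{c_k A}x_k)\to 0$ and $\tau>0$, there is $K_1\in\mathbb{N}$ such that $\frac{1}{c_k}(x_k-\J_{c_k A}x_k)\in B[0;\tau]$ for all $k\geq K_1$; enlarging $K_1$ if necessary, $\eta_k\varepsilon_k\to 0$ also yields $\eta_k\varepsilon_k\in\left[0,\frac{1}{2}\right]$ for $k\geq K_1$. Setting $x:=x_k$, $\gamma:=c_k$, $\lambda:=\lambda_k$, $\eta:=\eta_k$, $e:=e_k$, and $\varepsilon:=\varepsilon_k$ in the notation of \cref{theorem:MetrSubregOptimalBoundszx}, the iteration scheme \cref{eq:GPPA} gives $z_x=(1-\lambda_k)x_k+\lambda_k\J_{c_k A}x_k+\eta_k e_k=x_{k+1}$, so the hypothesis $\|e_k\|\leq\varepsilon_k\|x_k-x_{k+1}\|$ is precisely $\|e\|\leq\varepsilon\|x-z_x\|$. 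Applying \cref{theorem:MetrSubregOptimalBoundszx}\cref{theorem:MetrSubregOptimalBoundszx:Lipschitz:rho} and \cref{theorem:MetrSubregOptimalBoundszx}\cref{theorem:MetrSubregOptimalBoundszx:Lipschitz:zx} delivers $\rho_k\in\left]0,1\right[$ and $\|x_{k+1}-\bar{x}\|\leq\frac{\rho_k+\eta_k\varepsilon_k}{1-\eta_k\varepsilon_k}\|x_k-\bar{x}\|$ for every such $k$, establishing \cref{theorem:JckALipschitzLinear:leq}.

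For \cref{theorem:JckALipschitzLinear:mu}, I would set $c:=\liminf_{k\to\infty}c_k$, $\underline{\lambda}:=\liminf_{k\to\infty}\lambda_k$, and $\overline{\lambda}:=\limsup_{k\to\infty}\lambda_k$, and derive the uniform asymptotic bound
\begin{align*}
\limsup_{k\to\infty}\rho_k\leq\max\left\{\left|1-\frac{\underline{\lambda}}{\alpha/c+1}\right|,\;|\overline{\lambda}-1|,\;\left(1-\underline{\lambda}(2-\overline{\lambda})\frac{1}{1+\alpha^2/c^2}\right)^{1/2}\right\}<1,
\end{align*}
where strictness of the inequality uses $c>0$, $\underline{\lambda}>0$, and $\overline{\lambda}<2$. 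Because $\eta_k\varepsilon_k\to 0$, it follows that $\limsup_{k\to\infty}\frac{\rho_k+\eta_k\varepsilon_k}{1-\eta_k\varepsilon_k}=\limsup_{k\to\infty}\rho_k<1$, so I can pick $\mu$ strictly between $\limsup_k\rho_k$ and $1$ and an index $K\geq K_1$ such that $\frac{\rho_k+\eta_k\varepsilon_k}{1-\eta_k\varepsilon_k}\leq\mu$ for all $k\geq K$. Iterating the estimate from \cref{theorem:JckALipschitzLinear:leq} then gives \cref{eq:theorem:JckALipschitzLinear:mu}.

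I anticipate no real obstacle: \cref{theorem:MetrSubregOptimalBoundszx} already absorbs the analytic core (Lipschitz continuity of $A^{-1}$ at $0$, averagedness of $\J_{c_k A}$, and the resolvent identity from \cref{corollary:JcA}), so the argument reduces to the localization ``$\frac{1}{c_k}(x_k-\J_{c_k A}x_k)\in B[0;\tau]$ eventually'' supplied by hypothesis together with routine $\limsup$ bookkeeping on $\rho_k$. The proof is therefore a structural twin of \cref{theorem:JckAMetricSub}, differing only in how the relevant perturbation is kept inside the neighborhood where the assumed regularity yields the contraction estimate.
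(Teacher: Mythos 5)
Your proposal is correct and follows essentially the same route as the paper's own proof: localize via $\frac{1}{c_{k}}\lr{x_{k}-\J_{c_{k}A}x_{k}}\in B[0;\tau]$ and $\eta_{k}\varepsilon_{k}\leq\frac{1}{2}$ for large $k$, invoke \cref{theorem:MetrSubregOptimalBoundszx}\cref{theorem:MetrSubregOptimalBoundszx:Lipschitz} with $z_{x}=x_{k+1}$ to get \cref{theorem:JckALipschitzLinear:leq}, then bound $\limsup_{k\to\infty}\rho_{k}<1$ and absorb the vanishing $\eta_{k}\varepsilon_{k}$ to extract $\mu$ and $K$. The $\limsup$ bookkeeping you spell out is exactly what the paper delegates to the analogous step in \cref{theorem:JckAMetricSub}\cref{theorem:JckAMetricSub:dist}.
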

 
 \begin{proof}
 	\cref{theorem:JckALipschitzLinear:leq}:  In view of
 	$\frac{1}{c_{k}} \lr{x_{k} -\J_{c_{k} A}x_{k}} \to 0$ and $\eta_{k}  \varepsilon_{k} \to 0$,   	
 	there exists $K_{1} \in \mathbb{N}$ such that 
 	\begin{align} \label{eq:theorem:JckALipschitzLinear:fracJckB}
 	(\forall k \geq K_{1}) \quad \frac{1}{c_{k}} \lr{x_{k} -\J_{c_{k} A}x_{k}}  \in B[0;\tau] \quad \text{and} \quad \eta_{k}  \varepsilon_{k}  \in \left[0, \frac{1}{2}\right].
 	\end{align}
 	For every $k \geq K_{1}$, applying 	\cref{theorem:MetrSubregOptimalBoundszx}\cref{theorem:MetrSubregOptimalBoundszx:Lipschitz:rho}$\&$\cref{theorem:MetrSubregOptimalBoundszx:Lipschitz:zx}   with $x=x_{k}$, $z_{x}=x_{k+1}$, $\gamma =c_{k}$, $\lambda=\lambda_{k}$, $\eta =\eta_{k}$, $e=e_{k}$, and  $\varepsilon=\varepsilon_{k}$, 
 	and employing \cref{eq:theorem:JckALipschitzLinear:fracJckB}, we derive $ \rho_{k} \in  \left]0,1\right[$   and  
 	\begin{align} \label{eq:theorem:JckALipschitzLinear:xk+1LEQ}
 		\norm{x_{k+1}-  \bar{x}} \leq \frac{\rho_{k} + \eta_{k} \varepsilon_{k} }{1  - \eta_{k} \varepsilon_{k} } \norm{x_{k} -   \bar{x} }.
 	\end{align}
 	
 	\cref{theorem:JckALipschitzLinear:mu}: Similarly with the proof of the last part of  \cref{theorem:JckAMetricSub}\cref{theorem:JckAMetricSub:dist}, there exist $K \geq K_{1}$ and $\mu \in \left[0,1\right[\,$ such that 
 	\begin{align*}
 	(\forall k \geq K) \quad \frac{\rho_{k} + \eta_{k} \varepsilon_{k} }{1  - \eta_{k} \varepsilon_{k} } \leq \mu,
 	\end{align*}
 	which, combined with \cref{eq:theorem:JckALipschitzLinear:xk+1LEQ}, guarantees \cref{eq:theorem:JckALipschitzLinear:mu}. 
 \end{proof}

 \begin{proposition} \label{prop:JckALipschitzLinear}
 	Suppose that   $A^{-1}$ is Lipschitz continuous at $0$ with modulus $\alpha >0$, i.e., $A^{-1}(0) =\{\bar{x}\}$ and there exists $\tau >0$  such that 
 	\begin{align*}   
 	\lr{ \forall (w,x) \in \gra A^{-1} \text{ with } w \in B[0;\tau]} \quad \norm{x -\bar{x}} \leq \alpha \norm{w}.
 	\end{align*}
 	Suppose that  $\sum_{k \in \mathbb{N}} \eta_{k} \norm{e_{k}}  < \infty $.   Let $\lr{\varepsilon_{k} }_{k \in \mathbb{N}}$ be in $\mathbb{R}_{+}$ such that  $\eta_{k}  \varepsilon_{k} \to 0$. 	Suppose that $(\forall k \in \mathbb{N})$ $\norm{e_{k}} \leq \varepsilon_{k} \norm{x_{k} -x_{k+1}}$ and $\lambda_{k} \in \left]0,2\right[\,$,  
 	and that $ \liminf_{k \to \infty} c_{k} > 0$  and $0 <  \liminf_{k \to \infty} \lambda_{k} \leq  \limsup_{k \to \infty} \lambda_{k} < 2$.
 	Set 
 	\begin{align*}
 	(\forall k \in \mathbb{N}) \quad \rho_{k} :=  \max \left\{ \lr{1-\frac{\lambda_{k} }{ \frac{\alpha}{c_{k} }+1}}^{2}, \lr{ 1 - \lambda_{k}  \lr{2-\lambda_{k} } \frac{1}{1+\frac{\alpha^{2} }{c_{k}^{2} }} }   \right\}^{\frac{1}{2}}. 
 	\end{align*}
 	Then the following hold. 
 	\begin{enumerate}	
 		\item  \label{prop:JckALipschitzLinear:leq}    For every $k$ large enough, we have that
 		\begin{align} \label{eq:prop:JckALipschitzLinear:xk+1}
 		\norm{x_{k+1}-  \bar{x}}  \leq \frac{\rho_{k} + \eta_{k} \varepsilon_{k} }{1  - \eta_{k} \varepsilon_{k} } \norm{x_{k} -   \bar{x} }.
 		\end{align}
 		\item 		\label{prop:JckALipschitzLinear:mu}   There exist $\mu \in \left[0,1\right[\,$ and $K \in \mathbb{N}$ such that 
 		\begin{align*}
 		(\forall k \geq K) \quad	\norm{x_{k+1}-  \bar{x}} \leq \mu \norm{x_{k} -   \bar{x} } \leq \mu^{k-K+1} \norm{x_{K} -   \bar{x} }.
 		\end{align*}
 	\end{enumerate} 
 \end{proposition}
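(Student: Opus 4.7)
The plan is to reduce this proposition to \cref{theorem:JckALipschitzLinear} by showing that the extra hypothesis $\sum_{k \in \mathbb{N}} \eta_{k} \norm{e_{k}} < \infty$ (together with the other standing assumptions) produces the one ingredient from \cref{theorem:JckALipschitzLinear} that has been dropped here, namely $\frac{1}{c_{k}}(x_{k} - \J_{c_{k} A}x_{k}) \to 0$. Once this is established, everything else matches word-for-word and the conclusions \cref{prop:JckALipschitzLinear:leq} and \cref{prop:JckALipschitzLinear:mu} transfer automatically.

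First I would upgrade the $\liminf/\limsup$ conditions to $\inf/\sup$ conditions. Because $(c_{k})_{k \in \mathbb{N}}$ lies in $\mathbb{R}_{++}$ with $\liminf_{k \to \infty} c_{k} > 0$, only finitely many $c_{k}$ can be below any fixed positive number below $\liminf c_{k}$, so $\inf_{k \in \mathbb{N}} c_{k} > 0$. Similarly, $(\lambda_{k})_{k \in \mathbb{N}}$ lies in $\left]0,2\right[\,$ with $0 < \liminf_{k \to \infty} \lambda_{k} \leq \limsup_{k \to \infty} \lambda_{k} < 2$; the same elementary argument applied to both ends yields $0 < \inf_{k \in \mathbb{N}} \lambda_{k} \leq \sup_{k \in \mathbb{N}} \lambda_{k} < 2$.

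Next I would invoke \cref{corollary:ckxkxk+1to0}\cref{corollary:ckxkxk+1to0:xk+1}, whose hypotheses are precisely $\sum_{k \in \mathbb{N}} \eta_{k} \norm{e_{k}} < \infty$, $\sup_{k \in \mathbb{N}} \lambda_{k} < 2$, $\inf_{k \in \mathbb{N}} c_{k} > 0$, and $\inf_{k \in \mathbb{N}} \lambda_{k} > 0$, and whose conclusion is exactly $\frac{1}{c_{k}}(x_{k} - \J_{c_{k} A}x_{k}) \to 0$.

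Finally, with this convergence in hand, the remaining hypotheses of \cref{theorem:JckALipschitzLinear} (Lipschitz continuity of $A^{-1}$ at $0$ with modulus $\alpha$, $\eta_{k}\varepsilon_{k} \to 0$, $\norm{e_{k}} \leq \varepsilon_{k} \norm{x_{k} -x_{k+1}}$, $\lambda_{k} \in \left]0,2\right[\,$, and the $\liminf/\limsup$ conditions on $c_{k}$ and $\lambda_{k}$) all coincide with those assumed here, so a direct application of \cref{theorem:JckALipschitzLinear}\cref{theorem:JckALipschitzLinear:leq} and \cref{theorem:JckALipschitzLinear}\cref{theorem:JckALipschitzLinear:mu} yields \cref{prop:JckALipschitzLinear:leq} and \cref{prop:JckALipschitzLinear:mu}, respectively. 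There is no real obstacle: the proposition is a packaging of \cref{theorem:JckALipschitzLinear} in which a convenient summability condition on the errors replaces the more technical hypothesis on $\frac{1}{c_{k}}(x_{k} - \J_{c_{k} A}x_{k})$, and the bridge between the two is furnished by \cref{corollary:ckxkxk+1to0}\cref{corollary:ckxkxk+1to0:xk+1}.
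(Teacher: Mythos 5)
Your proposal is correct and follows essentially the same route as the paper: reduce to \cref{theorem:JckALipschitzLinear} by deducing $\frac{1}{c_{k}}\lr{x_{k}-\J_{c_{k}A}x_{k}}\to 0$ from the summability of $\eta_{k}\norm{e_{k}}$ via \cref{corollary:ckxkxk+1to0}. The only (immaterial) difference is that the paper invokes \cref{corollary:ckxkxk+1to0}\cref{corollary:ckxkxk+1to0:Jck} directly under the $\liminf/\limsup$ conditions on $\lambda_{k}$, whereas you pass through the equivalent $\inf/\sup$ bounds and use \cref{corollary:ckxkxk+1to0}\cref{corollary:ckxkxk+1to0:xk+1}; both yield the needed limit.
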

 
 \begin{proof}
 	Similarly with the proof of \cref{prop:JckAMetricSub}, the assumptions  $ \liminf_{k \to \infty} c_{k} > 0$ and $(\forall k \in \mathbb{N})$ $c_{k} >0$ ensure that  $\inf_{k \in \mathbb{N}} c_{k} >0$.
 	Then, via \cref{corollary:ckxkxk+1to0}\cref{corollary:ckxkxk+1to0:Jck}, we establish that 
 	$ \frac{1}{c_{k}} \lr{x_{k} -\J_{c_{k} A}x_{k}}  \to 0$.
 	
 	Hence, as a consequence of \cref{theorem:JckALipschitzLinear}, we obtain the required results.
 \end{proof}
 
 \begin{remark}
 	We uphold the notation used in \cref{prop:JckALipschitzLinear}.
 	\begin{enumerate} 
 		
 		\item \cref{prop:JckALipschitzLinear} improves \cite[Theorem~4.7]{TaoYuan2018} from the following two aspects. 
 		\begin{itemize}
 			\item Note that  $0 <  \inf_{k \in \mathbb{N}} \lambda_{k} \leq \sup_{k \in \mathbb{N}} \lambda_{k} < 2 $ if and only if $(\forall k \in \mathbb{N})$ $  \lambda_{k} \in \left]0,2\right[$  and  $0 <  \liminf_{k \to \infty} \lambda_{k} \leq  \limsup_{k \to \infty} \lambda_{k} < 2$.
 			We see clearly that the constant $\lambda \in \left]0,2\right[$ in \cite[Theorem~4.7]{TaoYuan2018}  is replaced by a sequence $(\lambda_{k})_{k \in \mathbb{N}}$ satisfying $0 <  \inf_{k \in \mathbb{N}} \lambda_{k} \leq \sup_{k \in \mathbb{N}} \lambda_{k} < 2$ in \cref{prop:JckALipschitzLinear}.

 			\item 
 			Notice that the linear convergence result provided in \cite[Theorem~4.7]{TaoYuan2018} is essentially  that for every $k$ large enough, $\norm{x_{k+1}-  \bar{x}}  \leq \frac{\zeta_{k} + \eta_{k} \varepsilon_{k} }{1  - \eta_{k} \varepsilon_{k} } \norm{x_{k} -   \bar{x} }$,
 			where 
 			\begin{align*}
 	(\forall k \in \mathbb{N}) \quad		\zeta_{k} :=  \max \left\{ 
 			\lr{ 1 - \lambda    \frac{1}{1+\frac{\alpha^{2} }{c_{k}^{2} }} }    , \lr{ 1 - \lambda  \lr{2-\lambda } \frac{1}{1+\frac{\alpha^{2} }{c_{k}^{2} }} }   \right\}^{\frac{1}{2}} \text{ with } \lambda \in \left]0,2\right[\,. 
 			\end{align*}
 			Bearing  \cref{remark:theorem:MetrSubregOptimalBounds} in mind, we conclude that the convergence rate given in \cref{eq:prop:JckALipschitzLinear:xk+1} of our  \cref{prop:JckALipschitzLinear}\cref{prop:JckALipschitzLinear:leq}  is better than the corresponding rate in  \cite[Theorem~4.7]{TaoYuan2018}.
 		\end{itemize}
 		\item 	Taking \cref{corollary:lambda=1} into account, we observe that if $(\forall k \in \mathbb{N})$ $\lambda_{k} \equiv 1$, then $	(\forall k \in \mathbb{N}) $ $ \rho_{k} = \lr{ 1 -  \frac{1}{1+\frac{\alpha^{2} }{c_{k}^{2} }} }^{\frac{1}{2}} =\frac{1}{ \sqrt{1 + \frac{c_{k}^{2}}{\alpha^{2}} } } = \frac{\alpha}{ \lr{\alpha^{2} + c_{k}^{2}}^{\frac{1}{2}} }$.
 	Therefore, we see that  \cref{prop:JckALipschitzLinear}  extends \cite[Theorem~2]{Rockafellar1976} from $\lambda \equiv 1$ to  a sequence $(\lambda_{k})_{k \in \mathbb{N}}$ satisfying $0 <  \inf_{k \in \mathbb{N}} \lambda_{k} \leq \sup_{k \in \mathbb{N}} \lambda_{k} < 2$.
 	\end{enumerate}
 \end{remark}
 
 \cref{theorem:MetriSubregLinearNosingleton} below provides $R$-linear convergence results on generalized proximal point algorithms. It
 is an application of   \cref{theorem:Tkdistlinear} which shows a $R$-linear convergence result on the  non-stationary Krasnosel'ski\v{\i}-Mann iterations.
 \begin{theorem} \label{theorem:MetriSubregLinearNosingleton}
Let $\bar{x} \in \zer A$. 	Suppose that  $A$ is metrically subregular at $\bar{x} $ for $0 \in A\bar{x}$, i.e., 
 	\begin{align} \label{eq:theorem:MetriSubregLinearNosingleton:MetricSub} 
 	(\exists \kappa >0) (\exists \delta >0) (\forall x \in B[\bar{x}; \delta]) \quad \dist \lr{x, A^{-1}0} \leq \kappa \dist \lr{0, Ax}.
 	\end{align}
 	Suppose that $c:=\inf_{k \in \mathbb{N}} c_{k} >0$ and 
 	that $  \sum_{k \in \mathbb{N}} \eta_{k}\norm{e_{k}} < \delta$.
 	Let $0 < \tau \leq  \delta - \sum_{k \in \mathbb{N}} \eta_{k}\norm{e_{k}}  $ and let $x_{0} \in B[\bar{x}; \tau]$.
 	Define 
 	\begin{align*}
 \lr{\forall k \in \mathbb{N}} \quad 	y_{k}:= (1-\lambda_{k})x_{k} +\lambda_{k} T_{k}x_{k} \text{ and } \varepsilon_{k} :=  \eta_{k} \norm{e_{k}} \lr{2\norm{y_{k} -\Pro_{\zer A}x_{k}} +\eta_{k} \norm{e_{k}}}.	
 	\end{align*} 
 	Set $(\forall k \in \mathbb{N})$ 
 	$\gamma_{k} := \lr{1 + \frac{\kappa}{c_{k} }}$,   $\beta_{k}:= \frac{\lambda_{k} \lr{2 -\lambda_{k}} }{\gamma_{k}^{2}}$, and $\rho_{k} := 1-\beta_{k}	$.	Denote by $ \rho  := \sup_{k \in \mathbb{N}} \rho_{k}$, $\underline{\lambda}:=\inf_{k \in \mathbb{N}} \lambda_{k} $, and $ \overline{\lambda}:=\sup_{k \in \mathbb{N}} \lambda_{k} $. 
 	Then the following statements hold.
 	\begin{enumerate}
 		\item \label{theorem:MetriSubregLinearNosingleton:RhoLeq}  $(\forall k \in \mathbb{N})$ $\rho_{k} \in \left[0,1\right]$ and  $\dist^{2} \lr{x_{k+1}, \zer A} \leq \rho_{k} \dist^{2} \lr{x_{k}, \zer A} +\varepsilon_{k}$.
 		\item  \label{theorem:MetriSubregLinearNosingleton:leq} $	(\forall k \in \mathbb{N}) $ $ \dist^{2} \lr{x_{k+1}, \zer A} \leq  \lr{ \prod^{k}_{i=0} \rho_{i}  } \dist^{2} \lr{x_{0}, \zer A} + \sum^{k}_{i=0} \lr{ \prod^{k}_{j=i+1} \rho_{j} } \varepsilon_{i}.$
 		\item  \label{theorem:MetriSubregLinearNosingleton:lambda} Suppose that $0 < \underline{\lambda} \leq \overline{\lambda} <2 $. Then the following hold.  
 		\begin{enumerate}
 			\item  \label{theorem:MetriSubregLinearNosingleton:barrho}  $0 \leq  \rho   \leq 1 -  \frac{ \underline{\lambda} \lr{2 - \overline{\lambda}}}{ \lr{1 + \frac{\kappa}{c}}^{2} } < 1$.
 			\item   \label{theorem:MetriSubregLinearNosingleton:rhok} $(\forall k \in \mathbb{N}) $ $\dist^{2} \lr{x_{k+1},  \zer A} \leq  \rho^{k} \lr{    \rho  \dist^{2} \lr{x_{0},  \zer A} + \sum^{k}_{i=0}  \frac{ \varepsilon_{i} }{  \rho^{i} } }.  $ Consequently, if $\sum_{k \in \mathbb{N}}  \frac{ \varepsilon_{k} }{ \rho^{k} } < \infty$, then $\lr{\dist^{2} \lr{x_{k},  \zer A}}_{ k \in \mathbb{N}}$ converges $R$-linearly to $0$.  		
 		\end{enumerate}
 		\item  \label{theorem:MetriSubregLinearNosingleton:ek=0} Suppose that $0 < \underline{\lambda} \leq \overline{\lambda} <2 $ and that  $(\forall k \in \mathbb{N})$ $e_{k} \equiv 0$ and $\eta_{k} \equiv 0$. Then the following hold.  
 		\begin{enumerate}
 			\item  $(\forall k \in \mathbb{N})$ $\dist  \lr{x_{k+1},  \zer A} \leq \rho_{k}^{\frac{1}{2}} \dist  \lr{x_{k},  \zer A}  $.
 			\item There exists a point $\hat{x} \in \zer A$ such that 
 			\begin{align*}
 			(\forall k \in \mathbb{N}) \quad \norm{x_{k} -\hat{x}} \leq 2  \rho^{\frac{k}{2}} \dist \lr{x_{0}, \zer A}.
 			\end{align*}
 			Consequently, $\lr{x_{k}}_{k \in \mathbb{N}}$ converges $R$-linearly to a point $\hat{x} \in  \zer A$.
 		\end{enumerate} 
 	\end{enumerate}
 \end{theorem}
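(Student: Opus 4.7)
The plan is to reduce everything to \cref{theorem:Tkdistlinear} (and \cref{Corollary:Tkdistlinear} for item \cref{theorem:MetriSubregLinearNosingleton:ek=0}) applied with $T_{k}:= \J_{c_{k}A}$ and $\alpha_{k}:=\tfrac12$. By \cref{cor:fact:FixJcAzerA}, each $\J_{c_{k}A}$ is $\tfrac12$-averaged with $\Fix \J_{c_{k}A}= \zer A$, so the common fixed point set is $C=\zer A\neq \varnothing$ and, in the language of \cref{theorem:Tkdistlinear}, $\tfrac{1}{\alpha_{k}}-\lambda_{k}=2-\lambda_{k}\in [0,2]$. The hypothesis that $A$ is metrically subregular at $\bar x$ for $0$ with constants $(\kappa,\delta)$ combined with \cref{lemma:metricallysubregularEQ} yields, for every $k\in\mathbb{N}$, that $\Id-\J_{c_{k}A}$ is metrically subregular at $\bar x$ for $0$ with constant $\gamma_{k}=1+\tfrac{\kappa}{c_{k}}$ on the same ball $B[\bar x;\delta]$. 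Since $c=\inf_{k}c_{k}>0$, we get $\sup_{k}\gamma_{k}\le 1+\tfrac{\kappa}{c}<\infty$, and the uniform radius condition $\inf_{k}\delta_{k}=\delta>0$ of \cref{theorem:Tkdistlinear} is satisfied; the hypotheses $\sum_{k}\eta_{k}\|e_{k}\|<\delta$ and $x_{0}\in B[\bar x;\tau]$ are exactly the ones required there.

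With this identification, $\beta_{k}=\tfrac{\lambda_{k}(2-\lambda_{k})}{\gamma_{k}^{2}}$ as in \cref{theorem:Tkdistlinear}. The one small fact that needs an explicit verification (the only real technical point in the argument) is that $\beta_{k}\le 1$, so that the two branches in the definition of $\rho_{k}$ collapse to $\rho_{k}=1-\beta_{k}$, in agreement with the statement. This is immediate: $\lambda_{k}(2-\lambda_{k})=-(\lambda_{k}-1)^{2}+1\le 1$ and $\gamma_{k}\ge 1$, so $\beta_{k}\le 1$. Items \cref{theorem:MetriSubregLinearNosingleton:RhoLeq} and \cref{theorem:MetriSubregLinearNosingleton:leq} are then nothing but \cref{theorem:Tkdistlinear}\cref{theorem:Tkdistlinear:rhok} and \cref{theorem:Tkdistlinear}\cref{theorem:Tkdistlinear:prod} (with $C=\zer A$), after noting that the definition of $\varepsilon_{k}$ here is the same as the one used there.

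For \cref{theorem:MetriSubregLinearNosingleton:lambda}\textcolor{blue}{(a)}, I would estimate $\rho_{k}$ pointwise and take the supremum: the assumptions $0<\underline\lambda\le\overline\lambda<2$ and $c\le c_{k}$ give $\lambda_{k}(2-\lambda_{k})\ge \underline\lambda(2-\overline\lambda)>0$ and $\gamma_{k}^{2}\le (1+\tfrac{\kappa}{c})^{2}$, so
\begin{align*}
\rho_{k}=1-\frac{\lambda_{k}(2-\lambda_{k})}{\gamma_{k}^{2}}\le 1-\frac{\underline\lambda(2-\overline\lambda)}{(1+\tfrac{\kappa}{c})^{2}}<1,
\end{align*}
which yields $\rho=\sup_{k}\rho_{k}\in[0,1[\,$. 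Part \cref{theorem:MetriSubregLinearNosingleton:lambda}\textcolor{blue}{(b)} is then \cref{theorem:Tkdistlinear}\cref{theorem:Tkdistlinear:prod:linera} applied with this uniform $\rho$, and gives both the claimed inequality and the $R$-linear convergence of $(\dist^{2}(x_{k},\zer A))_{k\in\mathbb{N}}$ whenever $\sum_{k}\varepsilon_{k}/\rho^{k}<\infty$.

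Finally, for the exact case \cref{theorem:MetriSubregLinearNosingleton:ek=0}, with $e_{k}\equiv 0$ and $\eta_{k}\equiv 0$ the iteration \cref{eq:GPPA} coincides with \cref{eq:Tkxk} for the operators $T_{k}=\J_{c_{k}A}$, and \cref{Corollary:Tkdistlinear} applies verbatim: part \textcolor{blue}{(a)} gives $\dist(x_{k+1},\zer A)\le \rho_{k}^{1/2}\dist(x_{k},\zer A)$, and part \cref{Corollary:Tkdistlinear:ek=0:linera}, combined with the bound $\rho=\sup_{k}\rho_{k}<1$ already established, delivers a point $\hat x\in\zer A$ with $\|x_{k}-\hat x\|\le 2\rho^{k/2}\dist(x_{0},\zer A)$ and hence the $R$-linear convergence $x_{k}\to\hat x$. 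The only potential obstacle I foresee is bookkeeping: one must keep the roles of $\alpha_{k}=\tfrac12$, $\gamma_{k}=1+\tfrac{\kappa}{c_{k}}$, and $\delta_{k}=\delta$ straight throughout, and verify at the outset that $\beta_{k}\le 1$ so that the single-branch definition of $\rho_{k}$ in the statement matches the one from \cref{theorem:Tkdistlinear}.
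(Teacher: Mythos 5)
Your proposal is correct and follows essentially the same route as the paper's own proof: the same identification $T_{k}=\J_{c_{k}A}$, $\alpha_{k}=\tfrac12$, $C=\zer A$ via \cref{cor:fact:FixJcAzerA}, the same transfer of metric subregularity from $A$ to $\Id-\J_{c_{k}A}$ with constant $\gamma_{k}=1+\tfrac{\kappa}{c_{k}}$ via \cref{lemma:metricallysubregularEQ}, the same verification that $\beta_{k}\le 1$, and the same invocations of \cref{theorem:Tkdistlinear} and \cref{Corollary:Tkdistlinear} for the respective items. No gaps.
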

 
 \begin{proof}
 	In view of \cref{cor:fact:FixJcAzerA},  $(\forall k \in \mathbb{N})$ $\J_{c_{k} A}$ is $\frac{1}{2}$-averaged operator and 
 	\begin{align*}
 	(\forall k \in \mathbb{N}) \quad \lr{\Id -\J_{c_{k} A}  }^{-1}0 = \Fix \J_{c_{k} }A =\zer A.
 	\end{align*}
 	
 	Moreover, employing \cref{eq:theorem:MetriSubregLinearNosingleton:MetricSub} and for every $k \in \mathbb{N}$, applying \cref{lemma:metricallysubregularEQ} with $\gamma=c_{k}$, we know that $(\forall k \in \mathbb{N})$ $  \Id -\J_{c_{k} A}  $ is metrically subregular at $\bar{x}$ for $0 = \lr{\Id -\J_{c_{k} A} } \bar{x}$; more precisely, 	
 	\begin{align} \label{eq:theorem:MetriSubregLinearNosingleton:Id-JckA}
 \lr{\forall x \in B[\bar{x}; \delta]} \quad 	\dist \lr{x, \Fix \J_{c_{k} }A} \leq \lr{1 + \frac{\kappa}{c_{k} }}   \norm{x-\J_{c_{k}  A} x}.
 	\end{align}

 	\cref{theorem:MetriSubregLinearNosingleton:RhoLeq}$\&$\cref{theorem:MetriSubregLinearNosingleton:leq}: Inasmuch as $(\forall k \in \mathbb{N})$ $\lambda_{k} \in \left[0,2\right]$ and $\gamma_{k} >1$, we have that  $(\forall k \in \mathbb{N})$ $ \lambda_{k} \lr{2 -\lambda_{k}} \in \left[0,1\right]$ and $\beta_{k} = \frac{\lambda_{k} \lr{2 -\lambda_{k}} }{\gamma_{k}^{2}} \in \left[0,1 \right]$. Hence, $(\forall k \in \mathbb{N})$ $\rho_{k} \in \left[0,1\right]$.
 	
 		Apply	\cref{theorem:Tkdistlinear}\cref{theorem:Tkdistlinear:rhok}$\&$\cref{theorem:Tkdistlinear:prod}   with $(\forall k \in \mathbb{N})$ $T_{k} = \J_{c_{k} A}$, $C= \zer A$, $\gamma_{k} = \lr{1 + \frac{\kappa}{c_{k} }}$, $\delta_{k} \equiv \delta$, and $\alpha_{k} \equiv \frac{1}{2}$ to derive results  in \cref{theorem:MetriSubregLinearNosingleton:RhoLeq}$\&$\cref{theorem:MetriSubregLinearNosingleton:leq}.

 	\cref{theorem:MetriSubregLinearNosingleton:barrho}:	Note that 
 	\begin{align*}
 	&\sup_{k \in \mathbb{N}}   \gamma_{k} =	\sup_{k \in \mathbb{N}}  \lr{1 + \frac{\kappa}{c_{k} } } =   1 + \frac{\kappa}{ \inf_{k \in \mathbb{N}} c_{k} } = 1 + \frac{\kappa}{c}\\
 	\Rightarrow & \rho =  \sup_{k \in \mathbb{N}} \rho_{k} = 1- \inf_{k \in \mathbb{N}}  \beta_{k}  \leq 1 -  \frac{ \underline{\lambda} \lr{2 - \overline{\lambda}}}{ \lr{1 + \frac{\kappa}{c}}^{2} } \in \left]0,1\right[\,.
 	\end{align*}
 	Hence, \cref{theorem:MetriSubregLinearNosingleton:barrho} is true. 
 	
 	\cref{theorem:MetriSubregLinearNosingleton:rhok}: Employing \cref{theorem:MetriSubregLinearNosingleton:barrho} above and applying \cref{theorem:Tkdistlinear}\cref{theorem:Tkdistlinear:prod:linera}  with $(\forall k \in \mathbb{N})$ $T_{k} = \J_{c_{k} A}$, $C= \zer A$, $\gamma_{k} = \lr{1 + \frac{\kappa}{c_{k} }}$, $\delta_{k} \equiv \delta$, and $\alpha_{k} \equiv \frac{1}{2}$, we establish the required  results in \cref{theorem:MetriSubregLinearNosingleton:rhok}.
 	
 	\cref{theorem:MetriSubregLinearNosingleton:ek=0}: Taking  \cref{theorem:MetriSubregLinearNosingleton:barrho} above into account and applying  \cref{Corollary:Tkdistlinear} with $(\forall k \in \mathbb{N})$ $T_{k} = \J_{c_{k} A}$, $C= \zer A$, $\gamma_{k} = \lr{1 + \frac{\kappa}{c_{k} }}$, $\delta_{k} \equiv \delta$, and $\alpha_{k} \equiv \frac{1}{2}$, we directly obtain the desired results in \cref{theorem:MetriSubregLinearNosingleton:ek=0}.
 \end{proof}

\section{Conclusion and Future Work} \label{section:ConclusionFutureWork}
In this work, we considered the metrical subregularity of set-valued operators, which is  a   popular assumption for the linear convergence of optimization algorithms. We
 also provided an $R$-linear convergence result on the  non-stationary Krasnosel'ski\v{\i}-Mann iterations. Because the generalized proximal point algorithm is a special instance of the non-stationary Krasnosel'ski\v{\i}-Mann iterations, the result on the  non-stationary Krasnosel'ski\v{\i}-Mann iterations was applied to the generalized proximal point algorithm. In addition, we showed some $Q$-linear convergence results on the generalized proximal point algorithm under the assumption that the associated monotone operator is metrically subregular or that the inverse of the monotone operator is Lipschitz continuous with a positive modulus.

	Given a maximally monotone operator $A :\mathcal{H} \to 2^{\mathcal{H}}$  with   $\zer A =\{\bar{x}\}$, as stated in \cref{question:MetriSub} and \cref{remark:theorem:MetrSubregOptimalBoundszx}\cref{remark:theorem:MetrSubregOptimalBoundszx:better}, it is interesting to know if the following two statements are equivalent. 
	\begin{enumerate}
		\item \label{FW:MetricSub}  $A$ is metrically subregular at $\bar{x}$ for $0 \in A \bar{x}$.
		\item \label{FW:LipschitzConti}  $A^{-1}$ is Lipschitz continuous at $0$ with a positive modulus.
	\end{enumerate} 
	We stated  in \cref{prop:LipschitzMetricSubregularity}  that \cref{FW:LipschitzConti} implies \cref{FW:MetricSub} and we also  found  in \cref{example:fmetricsub}  a specific $A: \mathbb{R} \to \mathbb{R}$ suggesting a positive answer for the equivalence. In the future, we shall try either proving  \cref{FW:MetricSub} $\Rightarrow$ \cref{FW:LipschitzConti} or finding a counterexample for it. 
	 In addition, as we presented in the introduction, many popular optimization algorithms are   instances of the generalized proximal point algorithm when the associated monotone operator is specified accordingly. We shall also apply our linear convergence results to some particular examples of the generalized proximal point algorithm.


\addcontentsline{toc}{section}{References}

\bibliographystyle{abbrv}

\end{document}